\numberwithin{equation}{section}
\theoremstyle{plain}
\newtheorem{Theorem}{Theorem}
\newtheorem{Corollary}[Theorem]{Corollary}
\newtheorem{Proposition}[Theorem]{Proposition}
\newtheorem{Lemma}[Theorem]{Lemma}
\theoremstyle{definition}
\newtheorem{Definition}[Theorem]{Definition}
\newtheorem{Example}[Theorem]{Example}
\newtheorem{Remark}[Theorem]{Remark}
\newtheorem*{thank}{Acknowledgments}
\newcommand{\twocat}{\ensuremath{\textnormal{2-CAT}}\xspace}
\newcommand{\vcat}{\ensuremath{\textnormal{V-Cat}}\xspace}
\newcommand{\Cat}{\ensuremath{\textnormal{Cat}}\xspace}
\newcommand{\CAT}{\ensuremath{\textnormal{CAT}}\xspace}
\newcommand{\Set}{\ensuremath{\textnormal{Set}}\xspace}
\newcommand{\f}[1]{\ensuremath{\mathcal{#1}}\xspace}
\newcommand{\g}[1]{\ensuremath{\mathbb{#1}}\xspace}
\newcommand{\fcat}{\ensuremath{\mathcal{F}\textnormal{-CAT}}\xspace}
\newcommand{\wdoct}{\ensuremath{w\textnormal{-Doct}}\xspace}
\newcommand{\ldoct}{\ensuremath{l\textnormal{-Doct}}\xspace}
\newcommand{\fTAlg}{\ensuremath{\textnormal{T-}\mathbb{A}\textnormal{lg}}\xspace}
\newcommand{\fTCAlg}{\ensuremath{\textnormal{T}^{co}\textnormal{-}\mathbb{A}\textnormal{lg}}\xspace}
\newcommand{\fTAlgl}{\ensuremath{\textnormal{T-}\mathbb{A}\textnormal{lg}_{\textnormal{l}}}\xspace}
\newcommand{\TAlg}{\ensuremath{\textnormal{T-Alg}}\xspace}
\newcommand{\TCAlg}{\ensuremath{\textnormal{T}^{co}\textnormal{-Alg}}\xspace}
\newcommand{\MonCat}{\ensuremath{\textnormal{MonCat}}\xspace}
\newcommand{\fMonCat}{\ensuremath{\mathbb{M}\textnormal{onCat}}\xspace}
\newcommand{\ps}{\ensuremath{\textnormal{Ps}}\xspace}
\newcommand{\fps}{\ensuremath{\mathbb{P}\textnormal{s}}\xspace}
\newcommand{\Mon}{\ensuremath{\textnormal{Mon}}\xspace}
\newcommand{\fMon}{\ensuremath{\mathbb{M}\textnormal{on}}\xspace}
\newcommand{\phicol}{\ensuremath{\Phi\textnormal{-}\textnormal{Colim}}\xspace}
\newcommand{\fphicol}{\ensuremath{\Phi\textnormal{-}\mathbb{C}\textnormal{olim}}\xspace}
\newcommand{\TAlgs}{\ensuremath{\textnormal{T-Alg}_{\textnormal{s}}}\xspace}
\newcommand{\TAlgl}{\ensuremath{\textnormal{T-Alg}_{\textnormal{l}}}\xspace}
\newcommand{\TAlgc}{\ensuremath{\textnormal{T-Alg}_{\textnormal{c}}}\xspace}
\def\matrixobject@{%
 \edef \next@{={\DirectionfromtheDirection@ }}%
 \expandafter \toks@ \next@ \plainxy@
 \let\xy@@ix@=\xyq@@toksix@
 \xyFN@ \OBJECT@}
\let\xy@entry@@norm=\entry@@norm
\def\entry@@norm@patched{%
 \let\object@=\matrixobject@
 \xy@entry@@norm }
\newcommand{\twocong}[2][0.5]{\ar@{}[#2] \save ?(#1)*{\cong}\restore}
\newcommand{\twoeq}[2][0.5]{\ar@{}[#2] \save ?(#1)*{=}\restore}
\newcommand{\rtwocell}[3][0.5]{\ar@{}[#2] \ar@{=>}?(#1)+/l 0.2cm/;?(#1)+/r 0.2cm/^{#3}}
\newcommand{\ltwocell}[3][0.5]{\ar@{}[#2] \ar@{=>}?(#1)+/r 0.2cm/;?(#1)+/l 0.2cm/^{#3}}
\newcommand{\ltwocello}[3][0.5]{\ar@{}[#2] \ar@{=>}?(#1)+/r 0.2cm/;?(#1)+/l 0.2cm/_{#3}}
\newcommand{\dtwocell}[3][0.5]{\ar@{}[#2] \ar@{=>}?(#1)+/u  0.2cm/;?(#1)+/d 0.2cm/^{#3}}
\newcommand{\dltwocell}[3][0.5]{\ar@{}[#2] \ar@{=>}?(#1)+/ur  0.2cm/;?(#1)+/dl 0.2cm/^{#3}}
\newcommand{\drtwocell}[3][0.5]{\ar@{}[#2] \ar@{=>}?(#1)+/ul  0.2cm/;?(#1)+/dr 0.2cm/^{#3}}
\newcommand{\dthreecell}[3][0.5]{\ar@{}[#2] \ar@3{->}?(#1)+/u  0.2cm/;?(#1)+/d 0.2cm/^{#3}}
\newcommand{\utwocell}[3][0.5]{\ar@{}[#2] \ar@{=>}?(#1)+/d 0.2cm/;?(#1)+/u 0.2cm/_{#3}}
\newcommand{\dtwocelltarg}[3][0.5]{\ar@{}#2 \ar@{=>}?(#1)+/u  0.2cm/;?(#1)+/d 0.2cm/^{#3}}
\newcommand{\utwocelltarg}[3][0.5]{\ar@{}#2 \ar@{=>}?(#1)+/d  0.2cm/;?(#1)+/u 0.2cm/_{#3}}
\newcommand{\thing}{.}
\newcommand{\atwo}{\textbf{2}\xspace}
\newcommand{\hto}{\ensuremath{\, \mathaccent\shortmid\rightarrow\,}}
\title{Two Dimensional Monadicity}
\begin{document}
\author[John Bourke]{John Bourke}
\address{Department of Mathematics and Statistics, Masaryk University, Kotl\'a\v rsk\'a 2, Brno 60000, Czech Republic}
\email{bourkej@math.muni.cz}
\subjclass[2000]{Primary: 18D05}

\date{\today}

\thanks{Supported by the Grant agency of the Czech Republic under the grant P201/12/G028.}

%\keywords{}

%\thanks{}

\begin{abstract}
The behaviour of limits of weak morphisms in 2-dimensional universal algebra is not 2-categorical in that, to fully express the behaviour that occurs, one needs to be able to quantify over strict morphisms amongst the weaker kinds.  \f F-categories were introduced to express this interplay between strict and weak morphisms.  We express doctrinal adjunction as an $\f F$-categorical lifting property and use this to give monadicity theorems, expressed using the language of $\f F$-categories, that cover each weaker kind of morphism.
\end{abstract}
 \leftmargini=2em

\def\xypic{\hbox{\rm\Xy-pic}}
\maketitle

\section{Introduction}
The category of monoids sits over the category of sets via a forgetful functor $U:\Mon \to \Set$.  This functor is \emph{monadic} in the sense that it has a left adjoint $F$ and the canonical comparison $E:\Mon \to \Set^{T}$ to the category of algebras for the induced monad $T=UF$ is an equivalence of categories.  So if you are interested in monoids you can set about proving some theorem about algebras for an abstract monad $T$ and be sure it holds for monoids, or any variety of universal algebra for that matter: this is the categorical approach to universal algebra via monads.  
\\
Before going down this path one thing must be established -- namely, the monadicity of $U$.  To this end the fundamental theorem is \emph{Beck's monadicity theorem} \cite{Beck1967Triples} which asserts that a functor $U:\f A \to \f B$ is monadic just when it admits a left adjoint, is conservative and creates $U$-absolute coequalisers.  What makes the theorem so useful in practice is that the conditions, up to the \emph{existence} of a left adjoint, are cast entirely in terms of the typically simple $U$ -- these conditions are clearly met for monoids or indeed any variety (see Section 6.8 of \cite{Mac-Lane1971Categories}). 
\\
Now our interest is not in universal algebra, but in two dimensional universal algebra and 2-monads, and monadicity as appropriate to this setting.  What do we mean by the varieties of 2-dimensional universal algebra?  Monoidal structure borne by categories provides a basic example: one observes that associated to this notion are several kinds of structure.  On the objects front we have at least strict monoidal categories and monoidal categories of interest.  Between these are strict, strong, lax and colax monoidal functors all commonly arising, between which we have just one kind of monoidal transformation.  Restricting ourselves to just one kind of object, let us take the monoidal categories, we still find that we are presented with four related 2-categories $\MonCat_{w}$, where $w \in \{s,p,l,c\}$, living over the 2-category of categories \Cat as on the left below.
$$
\xy
(-30,0)*+{\txt{(1)}}="a";
(-15,0)*+{\MonCat_{s}}="00";
(10,0)*+{\MonCat_{p}}="10"; (30,10)*+{\MonCat_{l}}="11";(30,-10)*+{\MonCat_{c}}="1-1";(10,-25)*+{\Cat}="1-2";
{\ar^{} "00"; "10"}; 
{\ar^{} "10"; "11"}; 
{\ar^{} "10"; "1-1"}; 
{\ar_{V_{s}} "00"; "1-2"}; 
{\ar_{V_{p}} "10"; "1-2"}; 
{\ar@{.>}^<<<<<<<{V_{l}} "11"; "1-2"}; 
{\ar^{V_{c}} "1-1"; "1-2"}; 
\endxy
%\hspace{0.5cm}
\xy
%(-15,10)*+{\txt{(1.2)}}="a";
(-15,0)*+{\TAlgs}="00";
(10,0)*+{\TAlg_{p}}="10"; (30,10)*+{\TAlgl}="11";(30,-10)*+{\TAlgc}="1-1";(10,-25)*+{\Cat}="1-2";
{\ar^{} "00"; "10"}; 
{\ar^{} "10"; "11"}; 
{\ar^{} "10"; "1-1"}; 
{\ar_{U_{s}} "00"; "1-2"}; 
{\ar_{U_{p}} "10"; "1-2"}; 
{\ar@{.>}^<<<<<<<{U_{l}} "11"; "1-2"}; 
{\ar^{U_{c}} "1-1"; "1-2"}; 
\endxy
$$
The objects in each of these are monoidal categories; the morphisms are respectively strict, strong (or pseudo), lax and colax monoidal functors with monoidal transformations between them in each case.  The inclusions witness that strict morphisms can be viewed as pseudomorphisms $(s \leq p)$, which can in turn be viewed as lax or colax $(p \leq l)$ and ($p \leq c)$.
\\
The situation corresponds with that of a 2-monad $T$ based on \Cat, associated with which are several kinds of algebra, including strict and pseudo-algebras.  We will only ever consider the \emph{strict algebras}: note that even \emph{non-strict} monoidal categories are the \emph{strict} algebras for a 2-monad, as further discussed below.  Between strict algebras are strict, pseudo, lax and colax morphisms, with again a single notion of algebra transformation.  As before we obtain a diagram of 2-categories\begin{footnote}{The objects in each of these 2-categories are the strict algebras.}\end{footnote} and 2-functors over \Cat, as on the right above.
\\
Comparing the diagrams left and right above we see that a monadicity theorem in this setting ought to match each 2-category on the left with the corresponding one on the right in a compatible way.  More precisely, there should exist a 2-monad $T$ on \Cat and, for each $w \in \{s,p,l,c\}$, an equivalence of 2-categories $E_{w}:\MonCat_{w} \to \TAlg_{w}$ over \Cat, as left below
$$
\xy
(-30,0)*+{\txt{(2)}}="a";
(0,0)*+{\MonCat_{w}}="00";
(30,0)*+{\TAlg_{w}}="10";(15,-20)*+{\Cat}="1-2";
{\ar^{E_{w}} "00"; "10"}; 
{\ar_{V_{w}} "00"; "1-2"}; 
{\ar^{U_{w}} "10"; "1-2"}; 
\endxy
\hspace{2cm}
\xy
(0,0)*+{\MonCat_{w_{1}}}="00"; (30,0)*+{\TAlg_{w_{1}}}="10"; 
(0,-20)*+{\MonCat_{w_{2}}}="01"; (30,-20)*+{\TAlg_{w_{2}}}="11";
{\ar^{E_{w_{1}}} "00";"10"};
{\ar ^{}"10";"11"};
{\ar _{}"00";"01"};
{\ar _{E_{w_{2}}} "01";"11"};
\endxy$$
and these equivalences should be natural in the inclusions $w_{1} \leq w_{2}$ for $w_{1},w_{2} \in \{s,p,l,c\}$, as on the right.
\\
Now it is well known that such a 2-monad does indeed exist, with, moreover, each comparison $E_{w}:\MonCat_{w} \to \TAlg_{w}$ an isomorphism of 2-categories.  Likewise many of the other varieties of 2-dimensional universal algebra are monadic in this sense.  With such varieties as the primary object of study the subject of 2-dimensional monad theory was developed, notably in \cite{Blackwell1989Two-dimensional}.  General results were obtained such as those enabling one to deduce that each inclusion $\MonCat_{s} \to \MonCat_{w}$ has a left 2-adjoint, or establish the bicategorical completeness and cocompleteness of $\MonCat_{p}$.
\\
Of course to apply such abstract results one must first establish monadicity.  How is this known?  Here the subject diverges substantially from the 1-dimensional approach of Beck's theorem.   The standard approach is that of colimit presentations \cite{Lack2010A-2-categories}\cite{Kelly1993Adjunctions}.  Here one explicitly constructs the intended 2-monad $T$ as an iterated colimit of free ones, and then performs lengthy calculations with the universal property of the colimit $T$ to establish monadicity in the sense described for monoidal categories above.
\\
Now although the natural analogue of Beck's theorem has been obtained for \emph{pseudomonads} \cite{LeCreurer2002Beck} and pseudoalgebra pseudomorphisms this does not specialise to capture monadicity in the precise sense described above, even when $w=p$.
\\
Our objective in the present paper is to establish  2-dimensional monadicity theorems, in which monadicity is recognised not by using an explicit description of a 2-monad, but by analysing the manner in which the varieties of 2-dimensional universal algebra sit over the base 2-category -- as in Diagram 1.  Apart from characterising such monadic situations our main results, Theorem~\ref{thm:EMExtension} through Theorem~\ref{thm:monadicity}, enable one to establish monadicity when a workable description of a 2-monad is not easily forthcoming.
\\
In seeking to understand monadicity in the above sense the first important observation is that the world of \emph{strict morphisms} is easily understood: one can check that $V_{s}:\MonCat_{s} \to \Cat$ has a left 2-adjoint -- say, by an adjoint functor theorem -- and then apply the enriched version of Beck's monadicity theorem \cite{Dubuc1970Kan-extensions} to establish strict monadicity.  Having observed this to be the case the natural question to ask is:
\emph{Which properties of the commutative triangle
$$\xy
(0,0)*+{\MonCat_{s}}="00";
(30,0)*+{\MonCat_{w}}="10";(15,-20)*+{\Cat}="1-2";
{\ar^{} "00"; "10"}; 
{\ar_{V_{s}} "00"; "1-2"}; 
{\ar^{V_{w}} "10"; "1-2"}; 
\endxy$$
ensure that the canonical isomorphism $E:\MonCat_{s} \to \TAlg_{s}$ extends to an isomorphism $E_{w}:\MonCat_{w} \to \TAlg_{w}$ over the base?}  We answer this question in Theorem~\ref{thm:monadicity} but not using the language of 2-categories.  For it turns out that these determining properties are not 2-categorical in nature: they cannot be expressed as properties of the 2-categories or 2-functors in the above diagram considered individually.  Rather, to express these properties we must be able to \emph{single out strict morphisms amongst each weaker kind}.  Consequently we treat the inclusion $\MonCat_{s} \to \MonCat_{w}$ as a single entity, an \emph{\f F-category} $\fMonCat_{w}$, and the above triangle as a single \emph{\f F-functor} $V:\fMonCat_{w} \to \Cat$.  
\\
Let us now give an overview of the paper and of our line of argument.  \f F-categories were introduced in \cite{Lack2011Enhanced} in order to explain certain relationships between strict and weak morphisms in 2-dimensional universal algebra.  We recall some basic facts about \f F-categories in Section 2, in particular discussing \f F-categories  $\fTAlg_{w}$ of algebras for a 2-monad and \f F-categories $\fMonCat_{w}$ of monoidal categories -- we will use monoidal categories as our running example throughout the paper.
\\
Given a strict monoidal functor $F$ and an adjunction $(\epsilon, F \dashv G, \eta)$ the right adjoint $G$ obtains a unique lax monoidal structure such that the adjunction becomes a monoidal adjunction.  This is an instance of \emph{doctrinal adjunction}, the main topic of Section 3, and the first key \f F-categorical property that we meet.  We express three variants of doctrinal adjunction -- $w$-doctrinal adjunction for $w \in \{l,p,c\}$ -- as lifting properties of an \f F-functor, so that the case just described asserts that the forgetful \f F-functor $V:\fMonCat_{l} \to \Cat$ satisfies $l$-doctrinal adjunction.  We define the closely related class $\wdoct$ of \emph{$w$-doctrinal} \f F-functors and analyse the relationships between the different notions for $w \in \{l,p,c\}$; each such class of \f F-functor is shown to be an orthogonality class in the category of \f F-categories.
\\
In the fourth section we turn to the reason \f F-categories were introduced in \cite{Lack2011Enhanced} -- namely, because of the interplay between strict and weak morphisms, \emph{tight} and \emph{loose}, that occurs when considering limits of weak morphisms in 2-dimensional universal algebra.  The crucial limits are $\bar{w}$-limits of loose morphisms for $w \in \{l,p,c\}$ -- after defining these we describe the illuminating case of the colax limit of a lax monoidal functor.  We then examine how such limits allow one to represent loose morphisms by \emph{tight spans} -- the nature of this representation is analysed in detail.
\\
This analysis allows us to prove the key result of the paper, Theorem~\ref{thm:orthogonality} of Section 5, an orthogonality result which has nothing to do with 2-monads at all.  Its immediate consequence, Corollary~\ref{thm:decomposition}, ensures that the decomposition in \fcat
$$
\xy
(0,0)*+{\MonCat_{s}}="00";(25,0)*+{\fMonCat_{w}}="10";(50,0)*+{\Cat}="20";
{\ar^{j} "00"; "10"}; 
{\ar^<<<<<{V} "10"; "20"}; 
\endxy
\xy
(0,0)*+{=};
\endxy
\xy
(0,0)*+{\MonCat_{s}}="00";(25,8)*+{\MonCat_{s}}="11";(25,-7)*+{\MonCat_{w}}="1-1";(50,0)*+{\Cat}="20";
{\ar^{1} "00"; "11"}; 
{\ar_{j} "00"; "1-1"}; 
{\ar^{j} "11"; "1-1"}; 
{\ar^{V_{s}} "11"; "20"}; 
{\ar_{V_{w}} "1-1"; "20"}; 
\endxy
$$
of the forgetful 2-functor $V_{s}:\MonCat_{s} \to \Cat$ is an orthogonal $(^{\bot}\wdoct,\wdoct)$-decomposition.  Likewise for a 2-monad $T$ on \Cat the \f F-category $\fTAlg_{w}$ is obtained as a $(^{\bot}\wdoct,\wdoct)$-factorisation of $U_{s}:\TAlg_{s} \to \Cat$.
\\
Our monadicity results, given in Section 6, use the uniqueness of $(^{\bot}\wdoct,\wdoct)$-decompositions to extend our understanding of monadicity in the strict setting to cover each weaker kind of morphism.  For instance, the isomorphism $E:\MonCat_{s} \to \TAlg_{s}$ over \Cat induces a commuting diagram as on the outside of
$$
\xy
(0,0)*+{\MonCat_{s}}="00";(30,0)*+{\fMonCat_{w}}="10";(60,0)*+{\Cat}="20";
{\ar^{j} "00"; "10"}; 
{\ar^{V} "10"; "20"}; 
(0,-15)*+{\TAlg_{s}}="01";(30,-15)*+{\fTAlg_{w}}="11";(60,-15)*+{\Cat}="21";
{\ar_{E} "00"; "01"}; 
{\ar@{.>}|{E_{w}} "10"; "11"}; 
{\ar_{j} "01"; "11"}; 
{\ar_{U} "11"; "21"}; 
{\ar^{1} "20"; "21"}; 
\endxy
$$
with each horizontal path an orthogonal $(^{\bot}\wdoct,\wdoct)$-decomposition.  The two outer vertical isomorphisms then induce a unique invertible filler $E_{w}:\fMonCat_{w} \to \fTAlg_{w}$, so establishing monadicity in each weaker context.  This is the idea behind the main monadicity result, Theorem~\ref{thm:monadicity}.  Naturality in different weaknesses (as in Diagram 2 above) is treated in Theorem~\ref{thm:naturality}.\\
In the seventh and final section we describe examples and applications of our results.  We begin by completing the example of monoidal categories before moving on to more complex cases.  In Theorem~\ref{thm:monoids} we give an example of the kind of monadicity result that cannot be established using techniques, like presentations, that require explicit knowledge of a 2-monad.
\begin{thank}
The author thanks Richard Garner, Stephen Lack, Ignacio L\'opez Franco, Michael Shulman and Luk\'a\v s Vok\v r\'inek for useful feedback during the preparation of this work, and Michael Shulman for carefully reading a preliminary draft.  Thanks are due to the referee whose insightful observations enabled us to remove unnecessary hypotheses from the main results.  Thanks also to the organisers of the \emph{PSSL93} and the \emph{Workshop on category theory, in honour of George Janelidze} for providing the opportunity to present it, and the members of the \emph{Brno Category Theory Seminar} for listening to a number of talks about it.
\end{thank}
\section{\f F-categories in 2-dimensional universal algebra}
In this section we recall the notion of \f F-category, introduced in \cite{Lack2011Enhanced}, and a few basic facts about them.  
\subsection{\f F-categories}
An \emph{\f F-category} \g A is a 2-category with two kinds of 1-cell: those of the 2-category itself which are called \emph{loose} and a subcategory of \emph{tight} morphisms containing all of the identities.  A second perspective is that an \f F-category \g A is specified by a pair of 2-categories $\f A_{\tau}$ and $\f A_{\lambda}$ connected by a 2-functor $$j:\f A_{\tau} \to \f A_{\lambda}$$ which is the \emph{identity on objects, faithful and locally fully faithful}.  Here $\f A_{\lambda}$ contains all of the morphisms, which is to say the loose ones, and all 2-cells between them, whilst $\f A_{\tau}$ contains the tight morphisms together with all 2-cells between them in $\f A_{\lambda}$.  Loose morphisms in \g A are drawn with wavy arrows $A \rightsquigarrow B$ and tight morphisms with straight arrows $A \to B$, so that a typical diagram in \g A would be
$$\xy
(0,0)*+{A}="00"; (20,0)*+{B}="10"; 
(20,-20)*+{C}="11";
{\ar@{~>}^{f} "00";"10"};
{\ar ^{g}"10";"11"};
{\ar @{~>}_{h}"00";"11"};
{\ar@{=>}^{\alpha}(17,-4)*+{};(12,-9)*+{}};
\endxy$$
For each pair of objects $A,B \in \g A$ the inclusion of hom categories $$j_{A,B}:\f A_{\tau}(A,B) \to \f A_{\lambda}(A,B)$$ constitutes an \emph{injective on objects fully faithful functor}.  In fact  \f F-categories are precisely categories enriched in \f F, the full subcategory of the arrow category $\Cat^{\atwo}$ whose objects are those functors which are both injective on objects and fully faithful.  $\f F$ is a complete and cocomplete cartesian closed category so that the full theory of enriched categories \cite{Kelly1982Basic} can be applied to the study of \f F-categories.
\\
To begin with we have \fcat, the 2-category of \f F-categories, \f F-functors and \f F-natural transformations.  An \f F-functor $F:\g A \to \g B$ consists of a pair of 2-functors $F_{\tau}:\f A_{\tau} \to \f B_{\tau}$ and $F_{\lambda}:\f A_{\lambda} \to \f B_{\lambda}$ rendering commutative the square
$$\xy
(0,0)*+{\f A_{\tau}}="00"; (15,0)*+{\f A_{\lambda}}="10"; 
(0,-15)*+{\f B_{\tau}}="01"; (15,-15)*+{\f B_{\lambda}}="11";
{\ar^{j_{A}} "00";"10"};
{\ar ^{F_{\lambda}}"10";"11"};
{\ar _{F_{\tau}}"00";"01"};
{\ar _{j_{B}} "01";"11"};
\endxy$$
This equally amounts to a 2-functor $F_{\lambda}:\f A_{\lambda} \to \f B_{\lambda}$ which \emph{preserves tightness}.  An \f F-natural transformation $\eta:F \to G$ is a 2-natural transformation $\eta:F_{\lambda} \to G_{\lambda}$ with \emph{tight components}.
%\subsection{Concepts definable in \f F-categories}
%Any concept definable in a 2-category can certainly be considered in the \f F-categorical context; thus adjunctions, equivalences and so on.  When we speak of an \emph{adjunction or equivalence in an \f F-category \g A}we will mean an adjunction or equivalence in its 2-category $\f A_{\lambda}$ of loose morphisms, though we may speak, for instance, of an adjunction in \g A with \emph{tight left adjoint}.  Just as a 2-functor preserves adjunctions so too will \f F-functors but also adjunctions with tight left adjoint.
\subsection{2-categories as \f F-categories}
Each 2-category \f A may be viewed as an \f F-category in two extremal ways: as an \f F-category in which only the identities are tight, or as an \f F-category in which \emph{all morphisms are tight}, whereupon the induced \f F-category has the form $$1:\f A \to \f A$$
When we view a 2-category \f A as an \f F-category it will \emph{always be in this second sense} and we again denote it by \f A.  \\
With this convention established we can treat 2-categories as special kinds of \f F-categories and unambiguously speak of \f F-functors $F:\f A \to \g B$ from 2-categories to \f F-categories,  or from \f F-categories to 2-categories as in $G:\g B \to \f C$.  These \f F-functors appear as triangles
$$\xy
(10,0)*+{\f A}="00"; 
(0,-15)*+{\f B_{\tau}}="01"; (20,-15)*+{\f B_{\lambda}}="11";
{\ar ^{F_{\lambda}}"00";"11"};
{\ar _{F_{\tau}}"00";"01"};
{\ar _{j_{B}} "01";"11"};
\endxy
\hspace{2cm}
\xy
(0,0)*+{\f B_{\tau}}="00"; (20,0)*+{\f B_{\lambda}}="10"; 
(10,-15)*+{\f C}="01";
{\ar^{j_{A}} "00";"10"};
{\ar ^{G_{\lambda}}"10";"01"};
{\ar _{G_{\tau}}"00";"01"};
\endxy
$$
with an \f F-functor between 2-categories just a 2-functor.  Observe that each $\f F$-category $\g A$ induces an \f F-functor from its 2-category of tight morphisms $$j:\f A_{\tau} \to \g A$$ which is the identity on tight morphisms and $j:\f A_{\tau} \to \f A_{\lambda}$ on loose ones -- we abuse notation by using $j$ in either situation.
\subsection{\f F-categories of monoidal categories}
In two-dimensional universal algebra one encounters morphisms of four different flavours and so \f F-categories naturally arise.  Here we recall the various \f F-categories associated to the notion of monoidal structure -- we recall the defining equations for monoidal functors as we will use these later on.\\
The data for a monoidal category consists of a tuple $\overline{A}=(A,\otimes,i^{A},\lambda^{A},\rho^{A}_{l},\rho^{A}_{r})$ where we use juxtaposition for the tensor product.  A lax monoidal functor $(F,f,f_{0}):\overline{A} \rightsquigarrow \overline{B}$ consists of a functor $F:A \to B$, coherence constraints $f_{a,b}:Fa\otimes Fb \to F(a\otimes b)$ natural in $a$ and $b$ and a comparison $f_{0}:i^{B} \to Fi^{A}$, all satisfying the three conditions below
$$\xy
(0,0)*+{(Fa\otimes Fb)\otimes Fc}="10"; (40,0)*+{F(a\otimes b)\otimes Fc}="20"; (80,0)*+{F((a\otimes b)\otimes c)}="30"; 
(0,-12)*+{Fa\otimes (Fb\otimes Fc)}="11"; (40,-12)*+{Fa\otimes F(b\otimes c)}="21"; (80,-12)*+{F(a\otimes (b\otimes c))}="31"; 
{\ar ^{f_{a,b}\otimes 1}"10";"20"};
{\ar^{f_{a\otimes b,c}}"20";"30"};
{\ar _{1\otimes f_{b,c}}"11";"21"};
{\ar_{f_{a,b\otimes c}}"21";"31"};
{\ar_{\lambda_{Fa,Fb,Fc}^{B}}"10";"11"};
{\ar^{F\lambda_{a,b,c}^{A}}"30";"31"};
\endxy$$
$$\xy
(00,0)*+{i^{B}\otimes Fa}="10"; (25,0)*+{Fi^{A}\otimes Fa}="20"; (50,0)*+{F(i^{A}\otimes a)}="30"; 
(0,-12)*+{Fa}="11"; (50,-12)*+{Fa}="31"; 
{\ar^<<<<{f_{0}\otimes 1}"10";"20"};
{\ar^{f_{i,a}}"20";"30"};
{\ar@{=}_{}"11";"31"};
{\ar_{\rho_{l}^{B}}"10";"11"};
{\ar^{F\rho_{l}^{A}}"30";"31"};
\endxy
\hspace{0.3cm}
\xy
(00,0)*+{(Fa)\otimes i^{B}}="10"; (25,0)*+{Fa\otimes Fi^{A}}="20"; (50,0)*+{F(a\otimes i^{A})}="30"; 
(0,-12)*+{Fa}="11"; (50,-12)*+{Fa}="31"; 
{\ar ^{1\otimes f_{0}}"10";"20"};
{\ar^{f_{a,i}}"20";"30"};
{\ar@{=}_{}"11";"31"};
{\ar_{\rho_{r}^{B}}"10";"11"};
{\ar^{F\rho_{r}^{A}}"30";"31"};
\endxy$$
which we call the \emph{associativity, left unit} and \emph{right unit} conditions.  We call $(F,f,f_{0})$ strong or strict monoidal just when the constraints $f_{a,b}$ and $f_{0}$ are invertible or identities respectively; reversing these constraints we obtain the notion of a colax monoidal functor.  
$$
\xy
(00,0)*+{Fa\otimes Fb}="10"; (30,0)*+{Ga\otimes Gb}="20"; 
(0,-12)*+{F(a\otimes b)}="11"; (30,-12)*+{G(a\otimes b)}="21"; 
{\ar ^{\eta_{a}\otimes \eta_{b}}"10";"20"};
{\ar_{\eta_{a\otimes b}}"11";"21"};
{\ar_{f_{a,b}}"10";"11"};
{\ar^{g_{a,b}}"20";"21"};
\endxy
\hspace{1cm}
\xy
(00,0)*+{i^{B}}="10"; (20,0)*+{Fi^{A}}="20"; (20,-12)*+{Gi^{A}}="21"; 
{\ar ^{f_{0}}"10";"20"};
{\ar_{g_{0}}"10";"21"};
{\ar^{\eta_{i^{A}}}"20";"21"};
\endxy
%%%%%%%%%%%%%%%%%%%%%%%%%%%%%%%%%%%%%%%
$$
Between lax monoidal functors are monoidal transformations $\eta:(F,f,f_{0}) \to (G,g,g_{0})$: these are natural transformations $\eta:F \to G$ satisfying the two conditions above, which we call the \emph{tensor} and \emph{unit} conditions for a monoidal transformation.
\\
For $w \in \{s,l,p,c\}$ we thus have $w$-monoidal functors (with $p$-monoidal meaning strong monoidal).  Together with monoidal categories and monoidal transformations these form a 2-category $\MonCat_{w}$ which sits over \Cat via a forgetful 2-functor $V_{w}:\MonCat_{w} \to \Cat$.
%We thus have $w$-monoidal functors for $w \in \{s,p,l,c\}$ (with $p$-monoidal meaning strong monoidal); these are the 1-cells of the 2-category $\MonCat_{w}$ whose objects and 2-cells are monoidal categories and monoidal transformations.  
Now strict monoidal functors are strong $(s \leq p)$ and strong monoidal functors can be viewed as lax ($p \leq l$) or colax $(p \leq c)$.  Whenever $w_{1} \leq w_{2}$ it follows that we have an \f F-category $\fMonCat_{w_{1},w_{2}}$ of monoidal categories with tight and loose morphisms the $w_{1}$ and $w_{2}$-monoidal functors respectively, as specified by the inclusion $j:\MonCat_{w_{1}} \to \MonCat_{w_{2}}$.  Each such \f F-category comes equipped with a forgetful \f F-functor $V:\fMonCat_{w_{1},w_{2}} \to \Cat$ where $V_{\tau}=V_{w_{1}}$ and $V_{\lambda}=V_{w_{2}}$ -- see the commuting triangles in Diagram 1 of the Introduction.  Of particular importance will be those \f F-categories $\fMonCat_{s,w}$ for $s \leq w$, which we denote by $\fMonCat_{w}$.

% -- note that the corresponding forgetful \f F-functors $V:\fMonCat_{w} \to \Cat$ invariably have tight part $V_{s}$.
\subsection{\f F-categories of algebras}
Of prime importance are those \f F-categories associated to a 2-monad $T$ on a 2-category \f C.  Each 2-monad has various associated flavours of algebra and morphism.  We will only be interested in \emph{strict algebras} and will call them algebras.  Between algebras we have strict, pseudo, lax and colax morphisms -- as with monoidal functors we specify these using $s$, $p$, $l$ and $c$.\\
Drawn in turn from left to right below is the data $(f,\overline{f})$ for a strict, pseudo, lax or colax morphism of algebras $(f,\overline{f}):(A,a) \rightsquigarrow (B,b)$
$$\xy
(0,0)*+{TA}="00"; (20,0)*+{TB}="10"; 
(0,-20)*+{A}="01"; (20,-20)*+{B}="11";
{\ar^{Tf} "00";"10"};
{\ar ^{b}"10";"11"};
{\ar _{a}"00";"01"};
{\ar _{f} "01";"11"};
\endxy
\hspace{0.75cm}
\xy
(0,0)*+{TA}="00"; (20,0)*+{TB}="10"; 
(0,-20)*+{A}="01"; (20,-20)*+{B}="11";
{\ar^{Tf} "00";"10"};
{\ar ^{b}"10";"11"};
{\ar _{a}"00";"01"};
{\ar _{f} "01";"11"};
(10,-10)*+{\cong};
(10,-7)*+{^{\overline{f}}};
\endxy
\hspace{0.75cm}
\xy
(0,0)*+{TA}="00"; (20,0)*+{TB}="10"; 
(0,-20)*+{A}="01"; (20,-20)*+{B}="11";
{\ar^{Tf} "00";"10"};
{\ar ^{b}"10";"11"};
{\ar _{a}"00";"01"};
{\ar _{f} "01";"11"};
{\ar@{=>}^{\overline{f}}(10,-7)*+{};(10,-13)*+{}};
\endxy
\hspace{0.75cm}
\xy
(0,0)*+{TA}="00"; (20,0)*+{TB}="10"; 
(0,-20)*+{A}="01"; (20,-20)*+{B}="11";
{\ar^{Tf} "00";"10"};
{\ar ^{b}"10";"11"};
{\ar _{a}"00";"01"};
{\ar _{f} "01";"11"};
{\ar@{=>}_{\overline{f}};(10,-13)*+{};(10,-7)*+{}}
\endxy$$
Thus $\overline{f}$ is an identity 2-cell in the first case, invertible in the second, and points into or out of $f$ in the lax or colax cases; in all cases these 2-cells are required to satisfy two coherence conditions \cite{Kelly1974Review}.\\
There is a single notion of 2-cell between any kind of algebra morphisms; for instance given a pair of lax morphisms $(f,\overline{f}),(g,\overline{g}):(A,a) \rightsquigarrow (B,b)$ an algebra 2-cell $\alpha:(f,\overline{f}) \Rightarrow (g,\overline{g})$ is a 2-cell $\alpha:f \Rightarrow g$ satisfying
$$
\xy
(0,0)*+{TA}="11";
(20,0)*+{TB}="31"; (0,-20)*+{A}="12";(20,-20)*+{B}="32";
{\ar^{b} "31"; "32"}; 
{\ar@/^1pc/^{f} "12"; "32"}; 
{\ar@/_1pc/_{g} "12"; "32"}; 
{\ar_{a} "11"; "12"}; 
{\ar^{Tf} "11"; "31"}; 
{\ar@{=>}_{\alpha}(10,-18)*+{};(10,-23)*+{}};
{\ar@{=>}_{\overline{f}}(10,-4)*+{};(10,-10)*+{}};
\endxy
\hspace{1cm}
\xy
(0,-10)*+{=};
\endxy
\hspace{1cm}
\xy
(0,0)*+{TA}="11";
(20,0)*+{TB}="31"; (0,-20)*+{A}="12";(20,-20)*+{B}="32";
{\ar^{b} "31"; "32"}; 
{\ar_{g} "12"; "32"}; 
{\ar_{a} "11"; "12"}; 
{\ar@/^1pc/^{Tf} "11"; "31"}; 
{\ar@/_1pc/_{Tg} "11"; "31"}; 
{\ar@{=>}^{\overline{g}}(10,-10)*+{};(10,-16)*+{}};
{\ar@{=>}_{T\alpha}(10,3)*+{};(10,-2)*+{}};
\endxy
$$
whilst the equation in the colax case looks like the lax case with the directions reversed.

Algebras, $w$-algebra morphisms and transformations live in 2-categories $\TAlg_{w}$ for $w \in \{s,p,l,c\}$, each of which comes equipped with an evident forgetful 2-functor to the base, which we always denote by $U_{w}:\TAlg_{w} \to \f C$.  Each strict morphism is a pseudo morphism ($s \leq p$), and each pseudomorphism can be viewed either as lax $(p \leq l)$ or colax $(p \leq c)$.   It follows that for each pair $w_{1},w_{2} \in \{s,p,l,c\}$ satisfying $w_{1} \leq w_{2}$ we have an \f F-category $\fTAlg_{w_{1},w_{2}}$ with tight morphisms the $w_{1}$-morphisms, and loose morphisms the $w_{2}$-morphisms.  Each comes equipped with a forgetful \f F-functor $U:\fTAlg_{w_{1},w_{2}} \to \f C$ (so $U_{\tau}=U_{w_{1}}$ and $U_{\lambda}=U_{w_{2}}$) -- see the commuting triangles of Diagram 1 of the Introduction.
\\
Of particular importance are those \f F-categories $\fTAlg_{s,w}$ whose tight morphisms are the strict ones, and following \cite{Lack2011Enhanced} we abbreviate these by $\fTAlg_{w}$.  As well as using $j:\TAlgs \to \TAlg_{w}$ for the defining inclusion and $U:\fTAlg_{w} \to \f C$ for the forgetful \f F-functor, we occasionally use $j_{w}$ or $U_{w}$ if we are in the presence of multiple $j$'s or $U$'s.

\subsection{Duality between lax and colax morphisms}
Colax algebra morphisms are lax algebra morphisms with 2-cells reversed.  This statement can be made precise using the covariant duality 2-functor $(-)^{co}:\twocat \to \twocat$ which takes a 2-category $\f C$ to the 2-category $\f C^{co}$ with the same underlying category but with 2-cells reversed.  Likewise it takes a 2-monad $T:\f C \to \f C$ to a 2-monad $T^{co}:\f C^{co} \to \f C^{co}$ and one then has, as noted in \cite{Kelly1974Doctrinal}, an equality $\TCAlg_{l}=\TAlg_{c}^{co}$ which restricts to $\TCAlg_{s}=\TAlg_{s}^{co}$.  The $(-)^{co}$ duality naturally extends to a 2-functor $$(-)^{co}:\fcat \to \fcat$$ under whose action we have that $\fTAlg_{c}^{co}= \fTCAlg_{l}$ and moreover that $$U^{co}:\fTAlg_{c}^{co} \to \f C^{co}\hspace{0.5cm}\textnormal{equals}\hspace{0.5cm}U:\fTCAlg_{l} \to \f C^{co}\hspace{0.2cm}.$$
A consequence of this duality is that each theorem about lax morphisms has a dual version concerning colax morphisms.  Indeed all of our definitions and results in the colax case will be dual to those in the lax setting -- though we will \emph{state} these results for colax morphisms it will always suffice to \emph{prove} results only in the lax setting.
%A consequence of this duality is that it allows us to avoid consideration of colax morphisms.  We will discuss only lax and pseudomorphisms in developing our theory in Sections 3 and 4 (this is notationally convenient) before establishing our main results in the colax setting using duality (Sections 5 and 6).
\subsection{Equivalence of \f F-categories}
Our monadicity theorems in Section 6 will assert that certain \f F-categories are \emph{equivalent} to \f F-categories of algebras for a 2-monad.  By an equivalence of \f F-categories we mean an equivalence in the 2-category of \f F-categories \fcat, which is to say an equivalence of \f V-categories for $\f V = \f F$.
\\
Recall from \cite{Kelly1982Basic} that a \f V-functor $F:\f A \to \f B$ is an equivalence just when it is \emph{essentially surjective on objects} and \emph{fully faithful} in the enriched sense.  When $\f V = \f F$ the first condition amounts to $F_{\tau}:\f A_{\tau} \to \f B_{\tau}$ being essentially surjective on objects, in the usual sense.  This also implies the weaker statement that $F_{\lambda}:\f A_{\lambda} \to \f B_{\lambda}$ is essentially surjective on objects.  Enriched fully faithfulness of $F$ amounts to ${F_{\tau}}_{A,B}:\f A_{\tau}(A,B) \to \f B_{\tau}(FA,FB)$ and ${F_{\lambda}}_{A,B}:\f A_{\lambda}(A,B) \to \f B_{\lambda}(FA,FB)$ being isomorphisms of categories for each pair $A,B \in \g A$.\\  
We conclude that $F$ is an equivalence of \f F-categories just when both 2-functors $F_{\tau}$ and $F_{\lambda}$ are essentially surjective on objects and 2-fully faithful, which is to say that both $F_{\tau}$ and $F_{\lambda}$ are 2-equivalences, or equivalences of 2-categories.

\section{Doctrinal adjunction and \f F-categorical lifting properties}
If a strict monoidal functor has a right adjoint that right adjoint admits a unique lax monoidal structure such that the adjunction lifts to a monoidal adjunction.  This is an instance of \emph{doctrinal adjunction} -- the topic of the present section.  We begin by recalling Kelly's treatment of doctrinal adjunction in the setting of 2-monads, recasting the notion in \f F-categorical terms, so that the above special case becomes the assertion that the forgetful \f F-functor $V:\fMonCat_{l} \to \Cat$ satisfies $l$-doctrinal adjunction -- we treat the cases $w \in \{l,p,c\}$.  In Section 3.2 we define the closely related notion of a $w$-doctrinal \f F-functor before showing that the $w$-doctrinal \f F-functors form an orthogonality class in \fcat.

\subsection{Doctrinal adjunction \f F-categorically}
Doctrinal adjunction was first studied in Kelly's paper \cite{Kelly1974Doctrinal} of the same name.  Motivated by the example of adjunctions between monoidal categories amongst others, all known to be describable using 2-monads via clubs \cite{Kelly1974On-clubs} or other techniques, he gave his treatment in the setting of 2-dimensional monad theory.  Let us now recall the relevant aspects of this.  Given $T$-algebras $(A,a)$ and $(B,b)$ and a morphism $f:A \to B$ together with an adjunction $(\epsilon,f \dashv g,\eta)$ in the base, his Theorem 1.2 asserts that there is a bijection between colax algebra morphisms of the form $(f,\overline{f}):(A,a) \rightsquigarrow (B,b)$ and lax morphisms of the form $(g,\overline{g}):(B,b) \rightsquigarrow (A,a)$.  The structure 2-cells $\overline{f}:f.a \Rightarrow b.Tf$ and $\overline{g}:a.Tg \Rightarrow g.b$ are expressed in terms of one another as mates as below
$$
\xy
(00,0)*+{TA}="11";(30,0)*+{TB}="31"; (00,-35)*+{A}="12";(30,-35)*+{B}="32";
(00,-15)*+{TA}="a";(30,-20)*+{B}="b";
{\ar^{b} "31"; "b"}; 
{\ar_{f} "12"; "32"}; 
{\ar_{a} "a"; "12"}; 
{\ar^{Tf} "11"; "31"}; 
{\ar_{1} "11"; "a"}; 
{\ar^{Tg} "31"; "a"}; 
{\ar_{g} "b"; "12"}; 
{\ar^{1} "b"; "32"}; 
%(15,-22)*+{\cong};
%(15,-19)*+{^{\overline{f}}};
{\ar@{=>}_{\overline{g}}(13,-14)*+{};(18,-19)*+{}};
{\ar@{=>}^{T\eta}(8,-6)*+{};(13,-6)*+{}};
{\ar@{=>}_{\epsilon}(18,-30)*+{};(23,-30)*+{}};
\endxy
\hspace{3cm}
\xy
(00,0)*+{TB}="11";(30,0)*+{TA}="31"; (00,-35)*+{B}="12";(30,-35)*+{A}="32";
(00,-15)*+{TB}="a";(30,-20)*+{A}="b";
{\ar^{a} "31"; "b"}; 
{\ar_{g} "12"; "32"}; 
{\ar_{b} "a"; "12"}; 
{\ar^{Tg} "11"; "31"}; 
{\ar_{1} "11"; "a"}; 
{\ar^{Tf} "31"; "a"}; 
{\ar_{f} "b"; "12"}; 
{\ar^{1} "b"; "32"}; 
%(15,-22)*+{\cong};
%(15,-19)*+{^{\overline{f}}};
{\ar@{=>}_{\overline{f}}(18,-20)*+{};(13,-15)*+{}};
{\ar@{=>}_{T\epsilon}(13,-6)*+{};(8,-6)*+{}};
{\ar@{=>}_{\eta}(23,-30)*+{};(18,-30)*+{}};
\endxy
$$
Since lax and colax morphisms cannot be composed this relationship cannot be expressed 2-categorically or indeed \f F-categorically -- it can be captured using \emph{double categories} as in Example 5.4 of \cite{Shulman2011Comparing}.  However if we start with $(f,\overline{f})$ a pseudomorphism then it does live in the same 2-category as the resultant lax morphism $(g,\overline{g}):(B,b) \rightsquigarrow (A,a)$.  Moreover it is shown in Proposition 1.3 of \cite{Kelly1974Doctrinal} that the unit and counit $\eta$ and $\epsilon$ then become algebra 2-cells $\eta:1 \Rightarrow (g,\overline{g}) \circ (f,\overline{f})$ and $\epsilon:(f,\overline{f}) \circ (g,\overline{g}) \Rightarrow 1$ in \TAlgl and so yield an adjunction $(\epsilon,(f,\overline{f}) \dashv (g,\overline{g}),\eta)$ in \TAlgl.  
\\
Dually, if $(g,\overline{g})$ were a pseudomorphism, then upon equipping $f$ with the corresponding colax structure $(f,\overline{f})$ the adjunction $(\epsilon,f \dashv g,\eta)$ lifts to an adjunction $(\epsilon,(f,\overline{f}) \dashv (g,\overline{g}),\eta)$ in the 2-category $\TAlg_{c}$.\\
The invertibility of $\overline{f}$ does not imply the invertibility of its mate $\overline{g}$, or vice-versa.  However, if both $\eta$ and $\epsilon$ are invertible, then $\overline{f}$ is invertible just when $\overline{g}$ is, which is to say that if $(\epsilon,f \dashv g,\eta)$ is an adjoint equivalence and either $f$ or $g$ admits the structure of a pseudomorphism the adjoint equivalence lifts to an adjoint equivalence in $\TAlg_{p}$.\\
Let us now abstract these lifting properties of adjunctions and adjoint equivalences into properties of the forgetful \f F-functors $U:\fTAlg_{w_{1},w_{2}} \to \f C$.  By an \emph{adjunction or adjoint equivalence in an \f F-category} \g A we will mean an adjunction or adjoint equivalence in its 2-category $\f A_{\lambda}$ of loose morphisms.  Given an \f F-functor $H:\g A \to \g B$ and an adjunction $(\epsilon,f \dashv g, \eta)$ in \g B a \emph{lifting of this adjunction along $H$} is an adjunction $(\epsilon^{\prime},f^{\prime} \dashv g^{\prime},\eta^{\prime})$ in \g A such that $Hf^{\prime}=f, Hg^{\prime}=g, H\epsilon^{\prime}=\epsilon$ and $H\eta^{\prime}=\eta$.  We will likewise speak of liftings of adjoint equivalences along $H$.  An \f F-functor $H: \g A \to \g B$ is said to satisfy
\begin{itemize}
\item \emph{weak $l$-doctrinal adjunction} if for each tight arrow $f:A \to B \in \g A$ each adjunction $(\epsilon,Hf \dashv g, \eta)$ in \g B lifts along $H$ to an adjunction in $\g A$ with left adjoint $f$.
\item \emph{weak $p$-doctrinal adjunction} if for each tight arrow $f:A \to B \in \g A$ each adjoint equivalence $(\epsilon,Hf \dashv g, \eta)$ in \g B lifts along $H$ to an adjoint equivalence in $\g A$ with left adjoint $f$.\begin{footnote}{This lifting property appears biased but of course is not, since the left adjoint of an adjoint equivalence is equally its right adjoint.}\end{footnote}
\item \emph{weak $c$-doctrinal adjunction} if for each tight arrow $f:A \to B \in \g A$ each adjunction $(\epsilon,g \dashv Hf, \eta)$ in \g B lifts along $H$ to an adjunction in \g A with right adjoint $f$.
\end{itemize}
The lifting properties for algebras described above can be rephrased as asserting exactly that \emph{the forgetful \f F-functor $U:\fTAlg_{p,w} \to \f C$ satisfies weak $w$-doctrinal adjunction for $w \in \{l,p,c\}$}.  Each of these statements asserts that if we are given a pseudomorphism of algebras whose underlying arrow has some kind of adjoint, then that adjunction lifts in a certain way; of course if the starting pseudomorphism were in fact strict then the same lifting will exist so that, in particular, \emph{the forgetful \f F-functor $U:\fTAlg_{w} \to \f C$ satisfies weak $w$-doctrinal adjunction for $w \in \{l,p,c\}$}.
\\
In fact such forgetful \f F-functors lift these adjunctions \emph{uniquely} -- as will follow upon considering the following simple lifting properties.  Recall that a 2-functor $H:\f A \to \f B$ \emph{reflects identity 2-cells} or is \emph{locally conservative} when it reflects the property of a 2-cell being an identity or an isomorphism, and is \emph{locally faithful} if it reflects the equality of parallel 2-cells.  Let us say that an \f F-functor $H:\g A \to \g B$ has any of these three local properties when its loose part $H_{\lambda}:\f A_{\lambda} \to \f B_{\lambda}$ has them: this means that $H$ has these properties with respect to all 2-cells and not just those between tight morphisms.  The following is evident from the definition of an algebra 2-cell.
\begin{Proposition}
Given $w_{1},w_{2} \in \{s,l,p,c\}$ satisfying $w_{1} \leq w_{2}$ the forgetful \f F-functor $U:\fTAlg_{w_{1},w_{2}} \to \f C$ reflects identity 2-cells, is locally conservative and locally faithful.  In particular these properties are true of each $U:\fTAlg_{w} \to \f C$.
\end{Proposition}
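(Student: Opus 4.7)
The plan is to verify each of the three properties directly from the definition of an algebra 2-cell recalled in the excerpt. Recall that a 2-cell in $\fTAlg_{w_1,w_2}$ between parallel $w_2$-morphisms $(f,\bar f),(g,\bar g):(A,a)\rightsquigarrow(B,b)$ is \emph{simply} a 2-cell $\alpha:f\Rightarrow g$ in $\f C$ subject to a single compatibility equation involving $\bar f$, $\bar g$, $a$, $b$ and $T\alpha$; and $U$ sends such an algebra 2-cell to its underlying 2-cell in $\f C$. The argument for each property is uniform across all four flavours $w_2\in\{s,l,p,c\}$ (the direction and invertibility of $\bar f$ varies between the cases, but not the line of reasoning), and the case $w_2=s$ is degenerate since $\bar f$ is then an identity.

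Local faithfulness is immediate from the description above: two parallel algebra 2-cells with the same image under $U$ have by definition the same underlying 2-cell in $\f C$, hence are equal as algebra 2-cells. For reflecting identity 2-cells, suppose $\alpha:(f,\bar f)\Rightarrow(g,\bar g)$ is an algebra 2-cell with $U\alpha$ an identity; then $f=g$ and $\alpha=1_f$ in $\f C$. Substituting $\alpha=1_f$ into the compatibility equation, both whiskerings $\alpha\cdot a$ and $b\cdot T\alpha$ collapse to identities, so the equation degenerates to $\bar f=\bar g$. Consequently $(f,\bar f)=(g,\bar g)$ as $w_2$-morphisms and $\alpha$ is the identity algebra 2-cell.

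Local conservativity will be handled by producing an inverse explicitly. Given an algebra 2-cell $\alpha:(f,\bar f)\Rightarrow(g,\bar g)$ with $U\alpha$ invertible in $\f C$, let $\alpha^{-1}:g\Rightarrow f$ denote the inverse; since $T$ is a 2-functor, $T\alpha^{-1}=(T\alpha)^{-1}$. Whiskering the compatibility equation for $\alpha$ on the appropriate side by $\alpha^{-1}\cdot a$ and on the other side by $b\cdot T\alpha^{-1}$ and cancelling yields precisely the compatibility equation for $\alpha^{-1}$ with respect to $(g,\bar g)$ and $(f,\bar f)$; hence $\alpha^{-1}$ lifts to an algebra 2-cell, the two-sided inverse of $\alpha$ in $\fTAlg_{w_1,w_2}$.

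The final clause follows by taking $w_1=s$, since by convention $\fTAlg_w=\fTAlg_{s,w}$. There is no genuine obstacle here: the proof is pure bookkeeping with the definition, which is why the author flags it as evident. The one point worth being careful about is the uniform handling of the four flavours—in particular, recording that the compatibility equation in the colax case is the lax one with 2-cells reversed, so that $(-)^{co}$-duality (as discussed in Section 2.5) already guarantees that the colax case follows from the lax argument without further calculation.
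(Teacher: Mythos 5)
Your verification is correct, and it is exactly the argument the paper has in mind: the paper offers no written proof, stating only that the Proposition is evident from the definition of an algebra 2-cell, and your direct check (an algebra 2-cell is a 2-cell of $\f C$ satisfying one equation, so faithfulness is immediate, an identity underlying 2-cell forces $\overline{f}=\overline{g}$, and the inverse of an invertible underlying 2-cell satisfies the reversed compatibility equation) is precisely that evident argument spelled out. Nothing further is needed.
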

\begin{Definition}
Let $w \in \{l,p,c\}$.  An \f F-functor $H:\g A \to \g B$ is said to satisfy \emph{$w$-doctrinal adjunction} if it satisfies the unique form of weak $w$-doctrinal adjunction.
\end{Definition}
For instance $H:\g A \to \g B$ satisfies $l$-doctrinal adjunction if for each tight arrow $f:A \to B \in \g A$ each adjunction $(\epsilon,Hf \dashv g, \eta)$ in \g B lifts \emph{uniquely} along $H$ to an adjunction in $\g A$ with left adjoint $f$.  Let us note that, since the $(-)^{co}$ duality interchanges left and right adjoints in an \f F-category, $H$ satisfies $c$-doctrinal adjunction just when $H^{co}$ satisfies $l$-doctrinal adjunction.  Since adjoint equivalences are fixed $H$ satisfies $p$-doctrinal adjunction just when $H^{co}$ does.
\begin{Proposition}\label{prop:ids}
Let $w \in \{l,p,c\}$ and consider $H:\g A \to \g B$.
\begin{enumerate}
\item If $H$ satisfies weak $w$-doctrinal adjunction and reflects identity 2-cells it satisfies $w$-doctrinal adjunction.
\item If $H$ is locally conservative and satisfies $l$-doctrinal adjunction or $c$-doctrinal adjunction then $H$ satisfies $p$-doctrinal adjunction.
\end{enumerate}
\end{Proposition}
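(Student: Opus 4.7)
My plan is to prove (1) by producing canonical comparison 2-cells between any two liftings of the same adjunction and applying reflection of identity 2-cells twice -- first to identify the right adjoints, then to identify the units and counits through the adjunction bijection attached to one of the liftings. For (2), I use the unique lift supplied by $l$- or $c$-doctrinal adjunction and promote it to an adjoint equivalence by using local conservativity.

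For (1) I will treat the lax case in detail; the colax case will then follow from the $(-)^{co}$ duality of Section 2.5 (which preserves reflection of identity 2-cells), and the $p$-doctrinal case is immediate since an adjoint equivalence is in particular an adjunction, so weak $p$-doctrinal adjunction supplies existence and the argument below supplies uniqueness. Assume $(\epsilon_i, f \dashv g_i, \eta_i)$ for $i = 1,2$ are two lifts along $H$ of $(\epsilon, Hf \dashv g, \eta)$. First, form the canonical comparison $\alpha := g_2 \epsilon_1 \cdot \eta_2 g_1 \colon g_1 \to g_2$; the triangle identity for $(\eta,\epsilon)$ gives $H\alpha = g\epsilon \cdot \eta g = 1_g$, so reflection of identity 2-cells yields $\alpha = 1$, whence $g_1 = g_2 =: g'$. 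Next form $\beta := \epsilon_1 f \cdot f\eta_2 \colon f \to f$; the other triangle identity in $\g B$ forces $H\beta = 1_{Hf}$, so reflection of identity 2-cells gives $\beta = 1_f$. Since $\epsilon_1 f \cdot f\eta_1 = 1_f$ by the triangle identity for $(\eta_1, \epsilon_1)$, both $\eta_1$ and $\eta_2$ are sent to $1_f$ under the adjunction bijection $\g A(A,A)(1_A, g'f) \cong \g A(A,B)(f,f)$ attached to $(\eta_1, \epsilon_1)$, which sends $\psi$ to $\epsilon_1 f \cdot f\psi$. Hence $\eta_1 = \eta_2$. A symmetric argument using the comparison $g'\epsilon_2 \cdot \eta_1 g'$ and the other triangle identity gives $\epsilon_1 = \epsilon_2$.

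For (2) I will treat the $l$-doctrinal case; the $c$-doctrinal version is dual. Given an adjoint equivalence $(\epsilon, Hf \dashv g, \eta)$ in $\g B$, $l$-doctrinal adjunction supplies a unique lift $(\epsilon', f \dashv g', \eta')$ along $H$. Since $H\eta' = \eta$ and $H\epsilon' = \epsilon$ are invertible, local conservativity of $H$ makes $\eta'$ and $\epsilon'$ invertible, so the lift is in fact an adjoint equivalence; its uniqueness as an adjoint equivalence is inherited from its uniqueness as an adjunction.

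The principal obstacle is the passage from $g_1 = g_2$ to $\eta_1 = \eta_2$ and $\epsilon_1 = \epsilon_2$ in part (1): local faithfulness of $H$ is \emph{not} assumed, only reflection of identity 2-cells. The key insight is that one further application of reflection of identities converts the comparison of two units into the comparison of two 2-cells of the form $\epsilon_1 f \cdot f\eta_i$, both of which are forced to be $1_f$; the adjunction bijection attached to $(\eta_1, \epsilon_1)$ then collapses these transposes back to equality of the units themselves.
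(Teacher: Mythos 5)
Your proof is correct and follows essentially the same route as the paper: uniqueness in (1) is obtained from the canonical comparison 2-cell between the two right adjoints together with reflection of identity 2-cells (with the $p$-case subsumed and the $c$-case by $(-)^{co}$ duality), and (2) promotes the unique lift to an adjoint equivalence via local conservativity, exactly as in the paper. The only cosmetic difference is the endgame of (1): the paper deduces $\eta'=(m f)\circ\eta$ and $\epsilon=\epsilon'\circ(f m)$ from the triangle identities before concluding, whereas you reach the same conclusion with two further applications of reflection of identities and injectivity of the adjunction transposition bijections.
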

\begin{proof}
\begin{enumerate}
\item
To prove the cases $w=l$ and $w=p$ it will suffice to show that any two adjunctions $(\epsilon,f \dashv g,\eta)$ and $(\epsilon^{\prime},f \dashv g^{\prime},\eta^{\prime})$ in \g A with common left adjoint and common image under $H$ necessarily coincide in \g A.  Since adjoints are unique up to isomorphism we have $m:g \cong g^{\prime}$ given by the composite
$$\xy
(-20,-15)*+{B}="e0";
(0,0)*+{A}="a0"; (20,0)*+{A}="b0";(0,-15)*+{B}="c0";
{\ar^{1} "a0"; "b0"}; 
{\ar@{~>}_{f} "a0"; "c0"};
{\ar@{~>}^{g} "e0"; "a0"}; 
{\ar_{1} "e0"; "c0"};   
{\ar@{~>}_{g^{\prime}} "c0"; "b0"};   
{\ar@{=>}_{\epsilon}(-8,-8)*+{};(-8,-13)*+{}};
{\ar@{=>}_{\eta^{\prime}}(7,-2)*+{};(7,-7)*+{}};
\endxy$$
Using the triangle equations for $f \dashv g$ and $f \dashv g^{\prime}$ we see that $(m\thing f)\circ \eta=\eta^{\prime}$ and that $\epsilon^{\prime}\circ(f\thing m)=\epsilon$.  Now the image of the 2-cell $m$ under $H$ is an identity by one of the triangle equations for $Hf \dashv Hg = Hg^{\prime}$.  Therefore $m$ is an identity and $g=g^{\prime}$.  Since $m\thing f$ and $f\thing m$ are identities it follows that $\eta=\eta^{\prime}$ and $\epsilon=\epsilon^{\prime}$ too.  The case $w=c$ is dual.
\item Suppose that $H$ satisfies $l$-doctrinal adjunction and is locally conservative.  Then given a tight arrow $f \in \g A$ each adjoint equivalence $(\epsilon,Hf \dashv g,\eta)$ in \g B lifts uniquely to an adjunction $(\epsilon^{\prime},f \dashv g^{\prime},\eta^{\prime})$ in \g A.  Since $H$ is locally conservative both $\epsilon^{\prime}$ and $\eta^{\prime}$ are invertible because their images are.  Therefore the lifted adjunction is an adjoint equivalence so that $H$ satisfies $p$-doctrinal adjunction. The $c$-case is dual.
\end{enumerate}
\end{proof}
\begin{Corollary}
Let $w \in \{l,p,c\}$.  Then $U:\fTAlg_{w} \to \f C$ satisfies $w$-doctrinal adjunction.  Furthermore both $U:\fTAlg_{l} \to \f C$ and $U:\fTAlg_{c} \to \f C$ satisfy $p$-doctrinal adjunction.
\end{Corollary}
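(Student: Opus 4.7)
The plan is to read this off directly from the preceding material: all three ingredients we need are already on the table, and only have to be combined. The first ingredient is the reprise of Kelly's doctrinal adjunction earlier in the section, which established that for each $w \in \{l,p,c\}$ the forgetful \f F-functor $U:\fTAlg_{w} \to \f C$ satisfies \emph{weak} $w$-doctrinal adjunction. The second ingredient is the Proposition immediately preceding Definition 3.2, which says that each $U:\fTAlg_{w_{1},w_{2}} \to \f C$ reflects identity 2-cells and is locally conservative (and locally faithful). The third ingredient is Proposition~\ref{prop:ids}, whose two clauses are designed precisely to upgrade these weaker hypotheses.

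First I would prove the main clause. Fix $w \in \{l,p,c\}$. By the first ingredient $U:\fTAlg_{w} \to \f C$ satisfies weak $w$-doctrinal adjunction, and by the second ingredient it reflects identity 2-cells. Proposition~\ref{prop:ids}(1) then immediately gives $w$-doctrinal adjunction for $U$.

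For the second sentence, specialise the result just obtained to $w=l$ and $w=c$: both $U:\fTAlg_{l} \to \f C$ and $U:\fTAlg_{c} \to \f C$ satisfy $l$- respectively $c$-doctrinal adjunction. Each is also locally conservative by the second ingredient. Proposition~\ref{prop:ids}(2) therefore applies in both cases and yields $p$-doctrinal adjunction for each, completing the argument.

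There is no real obstacle: everything has been arranged in the previous subsection so that this corollary is a two-line assembly. The only thing to be slightly careful about is to cite the weak-to-strict upgrade (clause (1) of Proposition~\ref{prop:ids}) before invoking the locally-conservative upgrade (clause (2)), since the latter takes $l$- or $c$-doctrinal adjunction, not its weak version, as input.
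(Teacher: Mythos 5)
Your proposal is correct and follows exactly the paper's own two-line argument: weak $w$-doctrinal adjunction plus reflection of identity 2-cells gives $w$-doctrinal adjunction via Proposition~\ref{prop:ids}.1, and then local conservativity together with the $l$- or $c$-case gives $p$-doctrinal adjunction via Proposition~\ref{prop:ids}.2. Nothing to add.
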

\begin{proof}
Since $U:\fTAlg_{w} \to \f C$ satisfies weak $w$-doctrinal adjunction and reflects identity 2-cells it follows from Proposition~\ref{prop:ids}.1 that $U$ satisfies $w$-doctrinal adjunction.  Since it is locally conservative the second part of the claim follows from Proposition~\ref{prop:ids}.2.
\end{proof}
\begin{Example}\label{thm:DoctrinalMonoidal}
In the concrete setting of monoidal categories doctrinal adjunction is well known.  Here we describe only those aspects relevant to our needs: namely, that the forgetful \f F-functors $V:\fMonCat_{w} \to \Cat$ satisfy $w$-doctrinal adjunction for $w \in \{l,p,c\}$.  Consider then a strict monoidal functor $F:\overline{A} \to \overline{B}$ and an adjunction of categories $(\epsilon, F \dashv G,\eta)$.  The right adjoint $G$ obtains the structure of a lax monoidal functor
$\overline{G}=(G,g,g_{0}):\overline{B} \rightsquigarrow \overline{A}$ with constraints $g_{x,y}$ and $g_{0}$ given by the composites
$$\xy
(0,0)*+{Gx\otimes Gy}="00"; (35,0)*+{GF(Gx\otimes Gy)}="10"; (70,0)*+{G(FGx\otimes FGy)}="20";(105,0)*+{G(x\otimes y)}="30";
{\ar^<<<<<<{\eta_{Gx\otimes Gy}} "00";"10"};
{\ar@{=}^{}"10";"20"};
{\ar^<<<<<<{G(\epsilon_{x}\otimes \epsilon_{y})} "20";"30"};
\endxy
$$
and $\eta_{i^{A}}:i^{A} \to GFi^{A} = Gi^{B}$.
It is straightforward to show that, with respect to these constraints, the natural transformations $\epsilon$ and $\eta$ become monoidal transformations.  Therefore we obtain the lifted adjunction $(\epsilon, F \dashv (G,g,g_{0}),\eta)$ in $\fMonCat_{l}$ whose uniqueness follows, using Proposition~\ref{prop:ids}.1, from the fact that $V$ reflects identity 2-cells; thus $V:\fMonCat_{l} \to \Cat$ satisfies $l$-doctrinal adjunction.  The cases $w=p$ and $w=c$ are entirely analogous.
\end{Example}
Note that unless $w = l$ the forgetful \f F-functor $V:\fMonCat_{w} \to \Cat$ does not satisfy $l$-doctrinal adjunction.  That $V:\fMonCat_{l} \to \Cat$ itself does so is due to the fact that the constraints $f_{a,b}:Fa\otimes Fb \to F(a\otimes b)$ and $f_{0}:i^{B} \to Fi^{A}$ point in the correct direction -- into $F$ -- and are not invertible.  Whilst $l$-doctrinal adjunction captures, to some extent, this \emph{laxness} it does not determine in any way the \emph{coherence axioms} for a lax monoidal functor.  This is illustrated by the fact that there is an \f F-category of monoidal categories, strict and \emph{incoherent} lax monoidal functors (these have components $f$ and $f_{0}$ oriented as above but satisfying no equations) and this too sits over \Cat via a forgetful \f F-functor satisfying $l$-doctrinal adjunction.
\subsection{Reflections and doctrinal \f F-functors}
Although each forgetful \f F-functor $U:\fTAlg_{w} \to \f C$ satisfies $w$-doctrinal adjunction it turns out that, insofar as this property characterises such \f F-functors, the only relevant adjunctions take a more specialised form.  We will begin by treating the cases $w=l$ and $w=p$, before treating the case $w=c$ by duality.
\newline{}
%%%%%%%%%%%%%%
Let us call an adjunction $(1,f \dashv g,\eta)$ with tight left adjoint and identity counit an \emph{l-reflection}.  If, in addition, the unit $\eta$ is invertible then we call the adjunction a \emph{p-reflection}.  A $p$-reflection is of course an adjoint equivalence.  \newline{}
We remark that any adjunction $(\epsilon,f \dashv g,\eta)$ is determined by three of its four parts: in particular the unit $\eta$ is the unique 2-cell $1 \Rightarrow g\thing f$ satisfying the triangle equation $(\epsilon \thing f)\circ (f\thing \eta) =1$.  In a $w$-reflection $(1,f \dashv g,\eta)$ the unit $\eta$ is therefore uniquely determined by the adjoints $f \dashv g$ and the knowledge that the counit is the identity -- we can thus faithfully abbreviate a $w$-reflection $(1,f \dashv g,\eta)$ by $f \dashv g$ if convenient.
%%%%%%%%%%%%%%
Whilst we only need to consider liftings of $w$-reflections we also need to consider liftings of \emph{morphisms of $w$-reflections}.  Consider $w$-reflections $(1,f_{1} \dashv g_{1},\eta_{1})$ and $(1,f_{2} \dashv g_{2},\eta_{2})$ and a \emph{tight} commutative square $(r,s):f_{1} \to f_{2}$ as left below.
$$\xy
(0,0)*+{A}="00"; (15,0)*+{C}="10"; 
(0,-10)*+{B}="01"; (15,-10)*+{D}="11";
{\ar^{r} "00";"10"};
{\ar ^{f_{2}}"10";"11"};
{\ar _{f_{1}}"00";"01"};
{\ar _{s} "01";"11"};
 \endxy
 \hspace{2cm}
 \xy
(0,0)*+{A}="00"; (15,0)*+{C}="10"; 
(0,-10)*+{B}="01"; (15,-10)*+{D}="11";
{\ar^{r} "00";"10"};
{\ar@{~>} _{g_{2}}"11";"10"};
{\ar@{~>} ^{g_{1}}"01";"00"};
{\ar _{s} "01";"11"};
 \endxy$$
 We call $(r,s):f_{1} \to f_{2}$ a \emph{morphism of $w$-reflections} if the above square of right adjoints also commutes and furthermore the compatibility with units $r\thing \eta_{1}=\eta_{2}\thing r$ is met.  Compatibility with the identity counits is automatic.  The following lemma is sometimes useful for recognising morphisms of $w$-reflections.
\begin{Lemma}\label{prop:morphAdj}
Let $w \in \{l,p\}$.  Consider $w$-reflections $(1,f_{1} \dashv g_{1},\eta_{1})$ and $(1,f_{2} \dashv g_{2},\eta_{2})$ and a tight commuting square $(r,s):f_{1} \to f_{2}$.  Then $(r,s)$ is a morphism of $w$-reflections just when its mate $m(r,s):g_{2}\thing s \Rightarrow r\thing g_{1}$ is an identity 2-cell.
\end{Lemma}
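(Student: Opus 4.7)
The plan is to reformulate both defining conditions of a morphism of $w$-reflections in terms of the single $2$-cell $\eta_2\,r\,g_1$.  I would first record that in any $w$-reflection $(1,f_i\dashv g_i,\eta_i)$, the triangle identities combined with $\epsilon_i = 1$ yield the strict equalities $\eta_i g_i = 1_{g_i}$ and $f_i \eta_i = 1_{f_i}$.  Using the commutativity $s f_1 = f_2 r$ together with $f_1 g_1 = 1_B$, one has the equality of $1$-morphisms $g_2 s = g_2 f_2 r g_1$, and the mate $m(r,s)$ reduces to the whiskering $\eta_2\,r\,g_1$ viewed as a $2$-cell between $r g_1$ and $g_2 s$.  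Hence the mate is an identity $2$-cell precisely when (i) $r g_1 = g_2 s$ as $1$-morphisms, and (ii) $\eta_2\,r\,g_1 = 1_{rg_1}$ as a $2$-cell; condition (i) is already the commutativity of the right-adjoint square, so it remains to equate (ii) with the unit compatibility $r\eta_1 = \eta_2 r$.

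For the forward implication I would simply whisker $r\eta_1 = \eta_2 r$ by $g_1$ on the right; the left-hand side then equals $r(\eta_1 g_1) = r \cdot 1 = 1_{r g_1}$ by the triangle identity, giving (ii) immediately.

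The reverse implication, which I regard as the main obstacle, extracts $r\eta_1 = \eta_2 r$ from the hypothesis $\eta_2\,r\,g_1 = 1$.  I would apply the middle-four interchange law to the horizontal composite of $\eta_1 : 1_A \Rightarrow g_1 f_1$ with $\eta_2 r : r \Rightarrow g_2 f_2 r$, obtaining the identity
\[ \bigl((g_2 f_2 r)\,\eta_1\bigr) \circ (\eta_2 r) \;=\; (\eta_2\,r\,g_1 f_1) \circ (r\,\eta_1). \]
On the right, $\eta_2\,r\,g_1 f_1 = (\eta_2\,r\,g_1)f_1 = 1 \cdot f_1 = 1$ by hypothesis (ii).  On the left, $(g_2 f_2 r)\eta_1 = g_2(s f_1)\eta_1 = (g_2 s)(f_1\eta_1) = (g_2 s)\cdot 1 = 1$, using $f_2 r = s f_1$ and the triangle identity $f_1\eta_1 = 1$.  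The interchange equation therefore collapses to $\eta_2 r = r\eta_1$, which is the required unit compatibility.
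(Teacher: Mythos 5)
Your proposal is correct and takes essentially the same route as the paper: the forward direction is the paper's own computation (with the counits being identities the mate is just the whiskering $\eta_{2}\, r\, g_{1}$, which the unit compatibility $r\eta_{1}=\eta_{2}r$ and the triangle identity $\eta_{1}g_{1}=1$ collapse to an identity). The converse, which the paper dismisses as ``straightforward'', is supplied correctly by your interchange argument, so nothing is missing.
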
 
\begin{proof}
It suffices to consider the case $w=l$.  Given a morphism of $l$-reflections $(r,s):f_{1} \to f_{2}$
 $$\xy
(0,0)*+{A}="00"; (15,0)*+{B}="10"; 
(0,-15)*+{C}="01"; (15,-15)*+{D}="11";
{\ar^{r} "00";"10"};
{\ar ^{f_{2}}"10";"11"};
{\ar _{f_{1}}"00";"01"};
{\ar _{s} "01";"11"};
 \endxy
\hspace{0.3cm}
\xy
(-20,-15)*+{C}="e0";(35,0)*+{B}="f0";
(0,0)*+{A}="a0"; (15,0)*+{B}="b0";(0,-15)*+{C}="c0";(15,-15)*+{D}="d0";
{\ar^{r} "a0"; "b0"}; 
{\ar_{s} "c0"; "d0"}; 
{\ar^{f_{1}} "a0"; "c0"};
{\ar_{f_{2}} "b0"; "d0"};  
{\ar@{~>}^{g_{1}} "e0"; "a0"}; 
{\ar_{1} "e0"; "c0"};   
{\ar^{1} "b0"; "f0"}; 
{\ar@{~>}_{g_{2}} "d0"; "f0"};   
%{\ar@{=>}_{\epsilon_{1}}(-8,-8)*+{};(-8,-13)*+{}};
{\ar@{=>}_{\eta_{2}}(22,-2)*+{};(22,-7)*+{}};
(30,-15)*+{C}="e0";
(50,0)*+{A}="a0"; (70,0)*+{A}="b0";(50,-15)*+{C}="c0";(85,0)*+{B}="f0";(65,-15)*+{D}="d0";
{\ar^{1} "a0"; "b0"}; 
{\ar_{f_{1}} "a0"; "c0"};
{\ar@{~>}^{g_{1}} "e0"; "a0"}; 
{\ar^{r} "b0"; "f0"}; 
{\ar_{1} "e0"; "c0"};   
{\ar@{~>}_{g_{1}} "c0"; "b0"};   
%{\ar@{=>}_{\epsilon_{1}}(43,-8)*+{};(43,-13)*+{}};
{\ar@{=>}_{\eta_{1}}(57,-2)*+{};(57,-7)*+{}};
{\ar_{s} "c0"; "d0"}; 
{\ar@{~>}_{g_{2}} "d0"; "f0"};   
\endxy$$
consider the mate of the left square: the central composite above.  As $(r,s)$ is a morphism of $l$-reflections we have $\eta_{2}\thing r=r\thing \eta_{1}$ so that the mate reduces to the rightmost composite: this is an identity by the triangle equation for $f_{1} \dashv g_{1}$.\newline{}
Conversely suppose that the mate of $(r,s):f_{1} \to f_{2}$ is an identity.  Then $(r,s)$ certainly commutes with the right adjoints so that it remains to verify compatibility with the units.  This is straightforward.
\end{proof}
Note that each \f F-functor preserves morphisms of $w$-reflections.  The following lifting properties are crucial.  For $w \in \{l,p\}$ we say that an \f F-functor $H: \g A \to \g B$ satisfies:
\begin{itemize}
\item \emph{$w$-Refl} if given a tight arrow $f:A \to B \in \g A$ each $w$-reflection $(1,Hf \dashv g,\eta)$ lifts uniquely along $H$ to a $w$-reflection $(1,f \dashv g^{\prime},\eta^{\prime})$ in $\g A$.
\item \emph{$w$-Morph} if given $w$-reflections $(1,f_{1} \dashv g_{1},\eta_{1})$ and $(1,f_{2} \dashv g_{2},\eta_{2})$ a tight commuting square $(r,s):f_{1} \to f_{2}$ is a morphism of $w$-reflections just when its image $(Hr,Hs):Hf_{1} \to Hf_{2}$ is one.
\end{itemize}
We say that $H:\g A \to \g B$ satisfies \emph{$c$-Refl} or \emph{$c$-Morph} if $H^{co}:\g A^{co} \to \g B^{co}$ satisfies $l$-Refl or $l$-Morph respectively.  We remark that these $c$-variants concern \emph{$c$-reflections} which are adjunctions with \emph{tight right adjoint} and \emph{identity unit}.  
\begin{Definition}
Let $w \in \{l,p,c\}$.  An \f F-functor $H:\g A \to \g B$ is said to be \emph{$w$-doctrinal} if it satisfies $w$-Refl, $w$-Morph and is locally faithful.  We denote the class of $w$-doctrinal \f F-functors by $w$-Doct.
\end{Definition}
The condition that $W$ be locally faithful may seem somewhat unnatural -- see the discussion after Theorem~\ref{thm:orthogonality} for our reasons for including it.\\
Evidently we have that $H$ is $c$-doctrinal just when $H^{co}$ is $l$-doctrinal.  Furthermore $H$ is $p$-doctrinal just when $H^{co}$ is $p$-doctrinal.  Let us now compare these lifting properties with those of Section 3.1.
\begin{Lemma}\label{thm:doctrinalLemma}
Let $w \in \{l,p,c\}$ and consider $H:\g A \to \g B$.
\begin{enumerate}
\item If $H$ satisfies $w$-doctrinal adjunction and reflects identity 2-cells then it satisfies $w$-Refl and $w$-Morph.
\item If $H$ satisfies $w$-doctrinal adjunction, reflects identity 2-cells and is locally faithful then it is $w$-doctrinal.
\item If $H$ is locally conservative, reflects identity 2-cells, is locally faithful and satisfies either $l$ or $c$-doctrinal adjunction then it is $p$-doctrinal.
\end{enumerate}
\end{Lemma}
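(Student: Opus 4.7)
The plan is to first handle parts (1) and (2) for $w \in \{l,p\}$, then derive the case $w = c$ by applying the $l$ case to $H^{co}$ via the $(-)^{co}$ duality, and finally derive (3) from (2) together with Proposition~\ref{prop:ids}.2. Note that $H$ reflects identity 2-cells, is locally conservative, or is locally faithful if and only if $H^{co}$ has the corresponding property, and $c$-Refl and $c$-Morph were defined precisely to transport through this duality; so no new work is needed for the $c$ case.

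For $w$-Refl with $w \in \{l,p\}$, suppose given a tight $f:A \to B$ in $\g A$ and a $w$-reflection $(1, Hf \dashv g, \eta)$ in $\g B$.  By $w$-doctrinal adjunction this adjunction, an adjoint equivalence when $w=p$, lifts uniquely along $H$ to an adjunction $(\epsilon', f \dashv g', \eta')$ in $\g A$ with tight left adjoint $f$, itself an adjoint equivalence in the $p$ case.  Since $H\epsilon' = 1$ and $H$ reflects identity 2-cells, $\epsilon'$ is itself an identity; in the $p$ case, $\eta'$ is invertible as part of an adjoint equivalence.  Thus the lift is a $w$-reflection.  Uniqueness is immediate: any $w$-reflection $(1, f \dashv g'', \eta'')$ lifting $(1, Hf \dashv g, \eta)$ is in particular an adjunction lift, so by the uniqueness clause in $w$-doctrinal adjunction it coincides with the one just constructed.

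For $w$-Morph, the key observation is that \f F-functors preserve mates, since these are built out of whiskered composition with units and counits. Given a tight commuting square $(r,s): f_1 \to f_2$ between $w$-reflections, $H$ applied to the mate $m(r,s):g_2 \thing s \Rightarrow r \thing g_1$ thus equals the mate $m(Hr,Hs)$.  By Lemma~\ref{prop:morphAdj}, $(r,s)$ is a morphism of $w$-reflections precisely when $m(r,s)$ is an identity, and similarly for $(Hr, Hs)$.  If $(Hr, Hs)$ is a morphism of $w$-reflections then $Hm(r,s) = m(Hr,Hs)$ is an identity, and since $H$ reflects identity 2-cells so is $m(r,s)$, making $(r,s)$ a morphism of $w$-reflections.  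The converse direction is automatic from $H$ being an \f F-functor.

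Part (2) is then immediate from (1): the three hypotheses give $w$-Refl, $w$-Morph, and local faithfulness, which is exactly what it means to be $w$-doctrinal.  For part (3), Proposition~\ref{prop:ids}.2 promotes either $l$- or $c$-doctrinal adjunction to $p$-doctrinal adjunction under the additional assumption of local conservativity, and then (2) applied with $w=p$ concludes.  The only genuine technical point is the claim that \f F-functors preserve mates, which lets Lemma~\ref{prop:morphAdj} do all the work for $w$-Morph; everything else is bookkeeping, driven by the fact that $H$ reflects identities, which consistently transports properties of lifted 2-cells back from $\g B$ to $\g A$.
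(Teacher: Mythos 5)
Your proposal is correct and follows essentially the same route as the paper: lift the $w$-reflection via $w$-doctrinal adjunction and use reflection of identity 2-cells to see the lifted counit is an identity, handle $w$-Morph by combining Lemma~\ref{prop:morphAdj} with the fact that $H$ preserves mates ($Hm_{r,s}=m_{Hr,Hs}$) and reflects identities, treat $c$ by the $(-)^{co}$ duality, and deduce (2) and (3) as you do. The only cosmetic difference is that you spell out the preservation of mates and the uniqueness clause explicitly, which the paper leaves implicit.
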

\begin{proof}
We will only consider the $l$-case of (1) and (2), all being essentially identical, with the $l$ and $c$ cases dual.
\begin{enumerate}
\item Consider a tight morphism $f:A \to B \in \g A$ and $l$-reflection $(1,Hf \dashv g,\eta)$.  Because $H$ satisfies $l$-doctrinal adjunction this lifts uniquely to an adjunction $(\epsilon^{\prime},f \dashv g^{\prime},\eta^{\prime})$ in \g A.  Since $H\epsilon^{\prime}=1$ and $H$ reflects identity 2-cells $\epsilon^{\prime}$ is an identity.  This verifies $l$-Refl.  For $l$-Morph consider $l$-reflections $(1,f_{1} \dashv g_{1},\eta_{1})$ and $(1,f_{2} \dashv g_{2},\eta_{2})$ in \g A and a tight morphism $(r,s):f_{1} \to f_{2}$ such that $(Hr,Hs):Hf_{1} \to Hf_{2}$ is a morphism of $l$-reflections; we must show $(r,s):f_{1} \to f_{2}$ is one too.  By Lemma~\ref{prop:morphAdj} this is equally to say that the mate $m_{r,s}:r\thing g_{1} \Rightarrow g_{2}\thing s$ of this square is an identity 2-cell, so giving a commutative square $(s,r):g_{1} \to g_{2}$.  So it will suffice to check $Hm_{r,s}$ is an identity.  But $Hm_{r,s}=m_{Hr,Hs}$ which is an identity since $(Hr,Hs)$ is a morphism of $l$-reflections.
\item Since $w$-doctrinal just means $w$-Refl and $w$-Morph together with local faithfulness this follows immediately from Part 1.
\item Since by Proposition~\ref{prop:ids}.2 such an \f F-functor satisfies $p$-doctrinal adjunction this follows from Part 2.
\qedhere
\end{enumerate}
\end{proof}

\begin{Corollary}\label{thm:algdoctrinal}
Let $T$ be a 2-monad on \f C.  For each $w \in \{l,p,c\}$ the forgetful \f F-functor $U:\fTAlg_{w} \to \f C$ is $w$-doctrinal.  Furthermore both $U:\fTAlg_{l} \to \f C$ and $U:\fTAlg_{c} \to \f C$ are $p$-doctrinal.
\end{Corollary}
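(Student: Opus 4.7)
The plan is to observe that this corollary is essentially a bookkeeping exercise: all the work has been accomplished in the preceding Proposition, Corollary and Lemma~\ref{thm:doctrinalLemma}, and the statement follows by checking off hypotheses. Concretely, I would first invoke the earlier Proposition to record, once and for all, that for every $w \in \{l,p,c\}$ the forgetful $U:\fTAlg_{w} \to \f C$ reflects identity 2-cells, is locally conservative, and is locally faithful. I would also recall from the previous Corollary that $U:\fTAlg_{w} \to \f C$ satisfies $w$-doctrinal adjunction, and that both $U:\fTAlg_{l} \to \f C$ and $U:\fTAlg_{c} \to \f C$ satisfy $p$-doctrinal adjunction.

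For the first assertion, fix $w \in \{l,p,c\}$. Combining $w$-doctrinal adjunction with the reflection of identity 2-cells and local faithfulness, an immediate application of Lemma~\ref{thm:doctrinalLemma}(2) yields that $U:\fTAlg_{w} \to \f C$ is $w$-doctrinal.

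For the second assertion, consider $U:\fTAlg_{l} \to \f C$. We have already listed the local properties (locally conservative, reflects identity 2-cells, locally faithful), and $l$-doctrinal adjunction has also been established. Thus Lemma~\ref{thm:doctrinalLemma}(3) applies directly to give that $U:\fTAlg_{l} \to \f C$ is $p$-doctrinal; the argument for $U:\fTAlg_{c} \to \f C$ is identical, invoking $c$-doctrinal adjunction in place of $l$-doctrinal adjunction (or alternatively, by appealing to the duality $\fTAlg_{c}^{co} = \fTCAlg_{l}$ recorded in Section 2.5 and the fact that $p$-doctrinality is self-dual).

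Because every hypothesis of Lemma~\ref{thm:doctrinalLemma} has already been separately verified for $U:\fTAlg_{w} \to \f C$, there is no genuine obstacle; the only thing to be careful about is lining up which part of the lemma applies to which case. In particular one should not forget the self-dual case $w=p$ of the first assertion, which is handled uniformly by Lemma~\ref{thm:doctrinalLemma}(2) and does not require invoking part (3).
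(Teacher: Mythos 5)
Your proposal is correct and follows essentially the same route as the paper: cite the Proposition giving the local properties of $U$ and the Corollary giving ($w$-, and $l$/$c$-) doctrinal adjunction, then apply Lemma~\ref{thm:doctrinalLemma}(2) for the first claim and Lemma~\ref{thm:doctrinalLemma}(3) for the second. Your bookkeeping is in fact slightly more careful than the paper's one-line proof, which says ``locally conservative'' where local faithfulness is the hypothesis actually needed for part (2).
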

\begin{proof}
Since $U:\fTAlg_{w} \to \f C$ satisfies $w$-doctrinal adjunction, reflects identity 2-cells and is locally conservative the first claim follows from Lemma~\ref{thm:doctrinalLemma}.2; since each such $U$ is also locally conservative the second claim follows from Lemma~\ref{thm:doctrinalLemma}.3.
\end{proof}
Let us conclude by mentioning a further evident example of $w$-doctrinal \f F-functors.

\begin{Proposition}\label{prop:equivalences}
Let $w \in \{l,p,c\}$.  If $H:\g A \to \g B$ is such that $H_{\lambda}:\f A_{\lambda} \to \f B_{\lambda}$ is 2-fully faithful then $H$ is $w$-doctrinal for each $w$.  In particular each equivalence of \f F-categories is $w$-doctrinal for each $w$.
\end{Proposition}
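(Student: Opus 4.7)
The plan is to verify each of the three defining conditions of $w$-doctrinal directly from the hypothesis. Since each hom-functor $H_{A,B}\colon \f A_\lambda(A,B) \to \f B_\lambda(HA,HB)$ is an isomorphism of categories, $H_\lambda$ is in particular locally faithful (giving the last clause of the definition), locally conservative, and it reflects identity 2-cells; moreover every loose $1$-cell $HA \to HB$ and every $2$-cell between parallel loose arrows in the image admits a unique preimage under $H_\lambda$.

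To verify $w$-Refl, I take a tight arrow $f\colon A\to B$ in $\g A$ and a $w$-reflection $(1, Hf \dashv g, \eta)$ in $\g B$. Using the bijection on loose $1$-cells, I pull $g$ back to a unique loose arrow $g'\colon B \to A$ with $Hg'=g$; using the bijection on $2$-cells, I pull $\eta$ back to a unique $\eta'\colon 1_A \Rightarrow g' \circ f$ with $H\eta'=\eta$. The identity counit equation $f \circ g' = 1_B$ and the triangle identity for $\eta'$ then hold in $\g A$ because their images hold in $\g B$ and $H_\lambda$ is injective on $1$-cells and faithful on $2$-cells; in the $p$-case, invertibility of $\eta'$ follows from local conservativity. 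Uniqueness of the lifted $w$-reflection is immediate from the same bijections.

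For $w$-Morph, given a tight commuting square $(r,s)\colon f_1 \to f_2$ between $w$-reflections, the two conditions characterising a morphism of $w$-reflections --- commutativity of the induced square of right adjoints and the unit compatibility $r \cdot \eta_1 = \eta_2 \cdot r$ --- are equations between parallel $1$- or $2$-cells, and so are reflected by the faithful $H_\lambda$. This handles $w \in \{l,p\}$; the case $w = c$ follows by applying the argument to $H^{co}$, whose loose part is again $2$-fully faithful. For the ``in particular'' clause, Section~2.6 records that an equivalence of \f F-categories has $H_\tau$ and $H_\lambda$ both $2$-equivalences, and hence $2$-fully faithful, so the first part applies. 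I expect no genuine obstacle here: the entire argument is formal once one has unpacked that $2$-fully faithfulness supplies both existence and uniqueness of the required lifts.
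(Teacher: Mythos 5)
Your proof is correct, and it is exactly the routine verification the paper leaves implicit: the proposition is stated without proof (as an ``evident'' example), and the intended argument is precisely yours. Using the hom-category isomorphisms of $H_{\lambda}$ to lift $g$ and $\eta$ uniquely, reflecting the identity-counit and triangle equations (and invertibility in the $p$-case), reflecting the two defining equations of a morphism of $w$-reflections for $w$-Morph, handling $w=c$ via $(-)^{co}$, and quoting Section~2.6 for the ``in particular'' clause covers everything required.
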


\subsection{A small orthogonality class}
Though not strictly necessary in what follows let us remark that, with the exception of weak $w$-doctrinal adjunction, all of the lifting/reflection properties so far considered are expressible as orthogonal lifting properties in \fcat.  Certainly it is not hard to see that this is true of the property of reflecting identity 2-cells, or of being locally faithful or locally conservative.  Less obvious is that this is true of the notion of $w$-doctrinal adjunction or of the conditions $w$-Refl and $w$-Morph, in particular of the condition $w$-Morph concerning liftings of morphisms of adjunctions.  We describe the conditions $l$-Refl and $l$-Morph here -- the $p$ and $c$ cases being similar.  To this end consider the following $\f F$-category $\g Adj_{l}$ depicted in its entirety on the left below
$$\xy
(0,7)*+{0}="00"; (0,-7)*+{0}="01";
(20,7)*+{1}="10";(20,-7)*+{1}="11";
{\ar_{1} "00"; "01"}; 
{\ar^{f} "00"; "10"};
{\ar@{~>}^{g} "10"; "01"}; 
{\ar_{f} "01"; "11"};
{\ar^{1} "10"; "11"};   
{\ar@{=>}^{\eta}(3,2)*+{};(8,2)*+{}};
\endxy
 \hspace{0.5cm}
 \xy
(0,7)*+{\atwo}="a0"; (20,7)*+{\g Adj_{l}}="b0";(0,-7)*+{\g A}="c0";(20,-7)*+{\g B}="d0";
{\ar^{j} "a0"; "b0"}; 
{\ar_{} "a0"; "c0"}; 
{\ar^{} "b0"; "d0"}; 
{\ar_{H} "c0"; "d0"}; 
{\ar@{.>}|{} "b0"; "c0"}; 
\endxy
\hspace{0.5cm}
 \xy
(0,7)*+{\fcat(\g Adj_{l},\g A)}="a0"; (35,7)*+{\fcat(\g Adj_{l},\g B)}="b0";(0,-7)*+{\fcat(\atwo,\g A)}="c0";(35,-7)*+{\fcat(\atwo ,\g B)}="d0";
{\ar^{H_{*}} "a0"; "b0"}; 
{\ar_{j^{*}} "a0"; "c0"}; 
{\ar^{j^{*}} "b0"; "d0"}; 
{\ar_{H_{*}} "c0"; "d0"}; 
\endxy
 $$
 where $g\thing f$ is loose, $f\thing g=1$, $f\thing \eta=1$ and $\eta \thing g=1$.  This is the free adjunction with identity counit and tight left adjoint $f$.   It has a single non-identity tight arrow $f$ so that ${\g Adj_{l}}_{\tau}$ equals the free tight arrow $\atwo$; therefore the inclusion ${\f Adj_{l}}_{\tau} \to \g Adj_{l}$ is the \f F-functor $j:\atwo \to \g Adj_{l}$ selecting $f$.  Now to give a commutative square as in the middle above is to give a tight arrow $f \in \g A$ and adjunction $(1,Hf \dashv g,\eta)$ in \g B.  To give a filler is to lift the adjunction along $H$ to an adjunction $(1,f \dashv g^{\prime},\eta^{\prime})$ in \g A; thus $H$ is orthogonal to $j$ when condition $l$-Refl is met.  The conditions $l$-Refl and $l$-Morph together assert exactly that $j$ and $H$ are orthogonal in \fcat as a 2-category -- this means that the right square above is a pullback in \CAT.  In fact by a standard argument the conditions $l$-Refl and $l$-Morph jointly amount to ordinary (1-categorical) orthogonality against the single \f F-functor $j \times 1:\atwo \times \atwo \to \g Adj_{l} \times \atwo$, of which $j$ is a retract.\newline{}
Therefore the class of $w$-doctrinal \f F-functors, $\wdoct$, forms an orthogonality class in the category of \f F-categories and \f F-functors.  The following section is geared towards establishing sufficient conditions on an \f F-category \g A under which the inclusion $j:\f A_{\tau} \to \g A$ belongs to $^{\bot}\wdoct$ -- is orthogonal to each $w$-doctrinal \f F-functor.

\section{Loose morphisms as spans}
We now consider completeness properties of \f F-categories appropriate to those of the form $\fTAlg_{w}$.  In an \f F-category \g A with such completeness properties we can represent loose morphisms in \g A by certain tight spans.  In the lax setting, for instance, each loose morphism $f:A \rightsquigarrow B$ is represented as a tight span $C_{f}:A \hto B$
$$\xy
(0,0)*+{C_{f}}="a0"; (-20,0)*+{A}="b0";(20,0)*+{B}="c0";
{\ar_{p_{f}\dashv r_{f}} "a0"; "b0"}; 
{\ar^{q_{f}} "a0"; "c0"}; 
\endxy$$
by taking its \emph{colax} limit.  The tight left leg $p_{f}:C_{f} \to A$ is part of an $l$-reflection $(1,p_{f} \dashv r_{f},\eta_{f})$ from which $f$ can be recovered as the composite $q_{f}\thing r_{f}:A \rightsquigarrow C_{f} \to B$.  In the present section we analyse this representation of loose morphisms by tight spans in detail, beginning with a discussion of the relevant limits.

\subsection{Limits of loose morphisms}
The main reason that \f F-categories were introduced in \cite{Lack2011Enhanced} was to capture the behaviour of limits in 2-categories of the form $\TAlg_{w}$ for $w \in \{l,p,c\}$.  In such 2-categories many \emph{2-categorical} limits have an \f F-categorical aspect -- namely, that the projections from the limit are \emph{strict and jointly detect strictness}.  Interpreted in the \f F-category $\fTAlg_{w}$ this asserts that the \emph{limit projections are tight and jointly detect tightness}.  This latter property is exactly that which distinguishes \f F-categorical limits (in the sense of $\f F$-enriched category theory) from 2-categorical ones (see Proposition 3.6 of \cite{Lack2011Enhanced}).   We will only need a few basic \f F-categorical limits and our treatment, in what follows, is elementary.\newline{}
There are three limits to consider here -- \emph{colax, pseudo and lax limits of loose morphisms} -- these correspond, in turn, to lax, pseudo and colax morphisms.  As we always lead with the case of lax morphisms we focus here primarily on colax limits.\newline{}
Given a loose morphism $f:A \rightsquigarrow B \in \g A$ its (colax/pseudo/lax)-limit consists of an object $C_{f}/P_{f}/L_{f}$ and a (colax/pseudo/lax)-cone $(p_{f},\lambda_{f},q_{f})$ as below
$$
\xy
(0,0)*+{C_{f}}="a0"; (-15,-15)*+{A}="b0";(15,-15)*+{B}="c0";
{\ar_{p_{f}} "a0"; "b0"}; 
{\ar^{q_{f}} "a0"; "c0"}; 
{\ar@{~>}_{f} "b0"; "c0"}; 
{\ar@{=>}_{\lambda_{f}}(0,-5)*+{};(0,-11)*+{}};
\endxy
\hspace{1cm}
\xy
(0,0)*+{P_{f}}="a0"; (-15,-15)*+{A}="b0";(15,-15)*+{B}="c0";
{\ar_{p_{f}} "a0"; "b0"}; 
{\ar^{q_{f}} "a0"; "c0"}; 
{\ar@{~>}_{f} "b0"; "c0"}; 
(0,-8)*{{\cong}_{\lambda_{f}}};
\endxy
\hspace{1cm}
\xy
(0,0)*+{L_{f}}="a0"; (-15,-15)*+{A}="b0";(15,-15)*+{B}="c0";
{\ar_{p_{f}} "a0"; "b0"}; 
{\ar^{q_{f}} "a0"; "c0"}; 
{\ar@{~>}_{f} "b0"; "c0"}; 
{\ar@{=>}_{\lambda_{f}}(0,-11)*+{};(0,-5)*+{}};
\endxy$$
with both projections $p_{f}$ and $q_{f}$ \emph{tight}.\newline{}
The colax limit $C_{f}$\begin{footnote}{Colax limits of arrows are usually called oplax limits of arrows.  We prefer colax limits here since they are lax limits in $\g A^{co}$ rather than $\g A^{op}$ and, similarly, sit better with our usage of lax and colax morphisms.}\end{footnote} is required to be the usual 2-categorical \emph{colax limit of an arrow} \cite{Lack2002Limits} in $\f A_{\lambda}$: this means that it has the following two universal properties.
\begin{enumerate}
\item Given any \emph{colax cone} $(r,\alpha,s)$ as below
$$\xy
(0,0)*+{X}="a0"; (-15,-15)*+{A}="b0";(15,-15)*+{B}="c0";
{\ar@{~>}_{r} "a0"; "b0"}; 
{\ar@{~>}^{s} "a0"; "c0"}; 
{\ar@{~>}_{f} "b0"; "c0"}; 
{\ar@{=>}_{\alpha}(0,-5)*+{};(0,-11)*+{}};
\endxy$$
there exists a unique $t:X \rightsquigarrow C_{f}$ satisfying $$p_{f}\thing t=r\textnormal{, }q_{f}\thing t=s\textnormal{ and }\lambda_{f}\thing t=\alpha$$
\item Given a pair of colax cones $(r,\alpha,s)$ and $(r^{\prime},\alpha^{\prime},s^{\prime})$ with common base $X$ together with 2-cells $\theta_{r}:r \Rightarrow r^{\prime} \in \f A_{\lambda}(X,A)$ and $\theta_{s}:s \Rightarrow s^{\prime} \in \f A_{\lambda}(X,B)$ satisfying
$$\xy
(0,0)*+{s}="00"; 
(15,0)*+{s^{\prime}}="10"; (0,-10)*+{fr}="01";
(15,-10)*+{fr^{\prime}}="11";
{\ar@{=>}^{\theta_{s}} "00"; "10"}; 
{\ar@{=>}^{\alpha^{\prime}} "10"; "11"}; 
{\ar@{=>}_{\alpha} "00"; "01"}; 
{\ar@{=>}_{f\theta_{r}} "01"; "11"}; 
\endxy$$
there exists a unique 2-cell $\phi:t \Rightarrow t^{\prime} \in \f A_{\lambda}(X,C_{f})$ between the induced factorisations such that $$p_{f}\thing \phi=\theta_{r}\textnormal{ and }q_{f}\thing \phi=\theta_{s} \textnormal{ .}$$
\end{enumerate}
For $C_{f}$ to be the colax limit of $f$ in the \f F-categorical sense we must also have
\begin{enumerate}
\item[(3)]
A morphism $t:X \rightsquigarrow C_{f}$ is tight just when $p_{f}\thing t$ and $q_{f}\thing t$ are tight -- \emph{the projections jointly detect tightness.}
\end{enumerate}
In the case of the pseudolimit $P_{f}$ the 2-cell $\lambda_{f}$ is required to be invertible.  If we call those colax cones with an invertible 2-cell \emph{pseudo-cones} then the universal properties of (1) and (2) above are only changed by replacing colax cones by pseudo-cones -- thus $(p_{f},\lambda_{f},q_{f})$ is the universal pseudo-cone.  The \f F-categorical aspect of (3) remains the same.  The lax limit of $f$ is simply the colax limit in $\g A^{co}$.  \newline{}
For $w \in \{l,p,c\}$ let us write \emph{$w$-limit of a loose morphism} as an abbreviation, so that $c$-limit stands for colax limit, for instance.  Now we mentioned above that lax morphisms correspond to colax limits and so on.  To capture this let us set $\overline{l}=c$, $\bar{p}=p$ and $\bar{c}=l$ as in \cite{Lack2011Enhanced}: the correspondence is captured by the following result.
\begin{Proposition}\label{prop:limits}
Let $w \in \{l,p,c\}$.  The forgetful \f F-functor $U:\fTAlg_{w} \to \f C$ creates $\overline{w}$-limits of loose morphisms.
\end{Proposition}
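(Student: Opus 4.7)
The plan is to treat the lax case $w=l$ in detail and obtain the colax case by the $(-)^{co}$ duality recorded in Section 2.5; the pseudo case $w=p$ follows the same script with all structural $2$-cells required to be invertible. Fix then a lax morphism $(f,\overline{f}):(A,a) \rightsquigarrow (B,b)$ in $\fTAlg_l$ and form, in $\f C$, the colax limit of the underlying morphism $f$, giving tight projections $p_f:C_f \to A$ and $q_f:C_f \to B$ together with the universal $2$-cell $\lambda_f:q_f \Rightarrow f p_f$. To create the algebra structure on $C_f$, I apply property~(1) of the colax limit to the colax cone on $TC_f$ with legs $aTp_f$ and $bTq_f$ and comparison $2$-cell $(\overline{f} Tp_f) \circ (bT\lambda_f)$, obtaining a unique $c_f:TC_f \to C_f$ satisfying $p_f c_f = aTp_f$, $q_f c_f = bTq_f$, and $\lambda_f c_f = (\overline{f} Tp_f) \circ (bT\lambda_f)$.

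The algebra axioms for $(C_f,c_f)$ then fall to the uniqueness clause of property~(1): both $c_f \cdot \eta_{C_f}$ and $1_{C_f}$ factor a common colax cone on $C_f$ (using the unit axioms of $(A,a)$ and $(B,b)$ together with naturality of $\eta$), and analogously $c_f \cdot Tc_f$ and $c_f \cdot \mu_{C_f}$ factor a common colax cone on $T^2 C_f$. By construction $p_f$ and $q_f$ are strict morphisms of algebras, and the equation $\lambda_f c_f = (\overline{f} Tp_f) \circ (bT\lambda_f)$ is precisely the lax algebra $2$-cell condition for $\lambda_f$ once the trivial constraints $\overline{p}_f = 1 = \overline{q}_f$ are substituted in. Hence the lifted cone lives in $\fTAlg_l$ and is visibly preserved by $U$, with tight projections.

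For the universal property, consider a colax cone in $\fTAlg_l$ with vertex $(X,x)$, lax legs $(r,\overline{r})$ and $(s,\overline{s})$, and algebra $2$-cell $\gamma:(s,\overline{s}) \Rightarrow (f,\overline{f}) \circ (r,\overline{r})$. Property~(1) in $\f C$ yields a unique $t:X \to C_f$ with $p_f t = r$, $q_f t = s$, and $\lambda_f t = \gamma$. To equip $t$ with a lax structure $\overline{t}:c_f \cdot Tt \Rightarrow t \cdot x$ I apply property~(2) to the pair of factorisations $c_f \cdot Tt$ and $t \cdot x$ through $C_f$, with the data $\theta_r := \overline{r}$ and $\theta_s := \overline{s}$; the compatibility square required by~(2) unpacks to the algebra $2$-cell condition already satisfied by $\gamma$, so a unique $\overline{t}$ exists. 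The two lax coherence axioms for $(t,\overline{t})$ and the uniqueness of the lifted factorisation then follow by applying the uniqueness clauses of (1) and (2) to the two candidate fillers in each case, and the $2$-dimensional aspect of the universal property is argued symmetrically using (2).

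Finally, the $\f F$-categorical creation clause requires that the projections jointly detect tightness: a lax morphism $(t,\overline{t})$ into $(C_f,c_f)$ is strict exactly when $\overline{t} = 1$, and by the uniqueness of $2$-cells in property~(2) applied to the identity, this is equivalent to $p_f \overline{t} = 1$ and $q_f \overline{t} = 1$, i.e.\ to the strictness of both composites with the tight projections $(p_f,1)$ and $(q_f,1)$. The main obstacle is not conceptual but calculational: correctly recognising the compatibility square of property~(2) as the algebra $2$-cell axiom for $\gamma$, and deriving the two lax coherence axioms for $\overline{t}$ via uniqueness arguments in (1) and (2); everything else is bookkeeping that is transparent once one pictures the relevant cones.
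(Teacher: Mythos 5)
Your argument is correct, but note that the paper does not actually prove Proposition~\ref{prop:limits}: it is quoted as a special case of Theorem 5.13 of \cite{Lack2011Enhanced}, with the case of $\overline{w}$-limits of loose morphisms going back to \cite{Blackwell1989Two-dimensional} and \cite{Lack2002Limits}. What you give is a direct verification, and it is essentially the standard creation argument from those sources; it is also exactly the argument the paper itself carries out in a concrete instance in Example~\ref{thm:LimitsMonoidal}, where $V:\fMonCat_{l}\to\Cat$ is shown to create colax limits of lax monoidal functors. So the trade-off is simply self-containedness versus brevity by citation, and your key identifications are the right ones: the defining equation $\lambda_f c_f=(\overline{f}\,Tp_f)\circ(b\,T\lambda_f)$ is the algebra 2-cell condition for $\lambda_f$, the compatibility square in the 2-dimensional property (2) unpacks to the algebra 2-cell axiom for $\gamma$, and tightness detection reduces to the projections jointly reflecting identity 2-cells (Lemma 3.1 of \cite{Lack2002Limits}, which the paper also invokes in the proof of Lemma~\ref{prop:spanmap}). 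Two points deserve tightening if you write the sketch out in full: the agreement of the two colax cones witnessing $c_f\cdot\eta_{C_f}=1$ and $c_f\cdot Tc_f=c_f\cdot\mu_{C_f}$ uses not only the algebra axioms and the 2-naturality of $\eta$ and $\mu$ but, crucially, the two coherence axioms of $\overline{f}$ (this is precisely where, in Example~\ref{thm:LimitsMonoidal}, the coherence axioms of a lax monoidal functor enter, so your parenthetical understates what is needed); and in the case $w=p$ the invertibility of the induced $\overline{t}$ does not follow merely from running ``the same script with invertible 2-cells'' but from the fact that the pseudolimit projections are jointly conservative, as the paper records in the proof of Proposition~\ref{prop:factor}.
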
 
%Creation above means that if the $\overline{w}$-limit of $Uf$ exists in \f C then the $\overline{w}$-limit of $f$ exists in $\fTAlg_{w}$ and is preserved by $U$.  
In its \f F-categorical formulation above this is a specialisation of Theorem 5.13 of \cite{Lack2011Enhanced} which characterises those limits created by the forgetful \f F-functors $U:\fTAlg_{w} \to \f C$.  However for $\overline{w}$-limits of loose morphisms, as concern us, the result goes back to \cite{Blackwell1989Two-dimensional} and \cite{Lack2002Limits}.\newline{}
Because lax limits are colax limits in $\g A^{co}$ we can and will avoid them entirely.  In order to work with colax and pseudolimits simultaneously let us introduce a final piece of notation.  For $w \in \{l,p\}$ we will use 
$$\xy
(0,0)*+{\bar{W}_{f}}="a0"; (-15,-15)*+{A}="b0";(15,-15)*+{B}="c0";
{\ar_{p_{f}} "a0"; "b0"}; 
{\ar^{q_{f}} "a0"; "c0"}; 
{\ar@{~>}_{f} "b0"; "c0"}; 
{\ar@{=>}_{\lambda_{f}}(0,-5)*+{};(0,-11)*+{}};
\endxy$$
to denote the $\overline{w}$-limit of $f$ and universal $\overline{w}$-cone: so $C_{f}$ and its colax cone when $w=l$ and $P_{f}$ when $w=p$.  When $w=p$ the 2-cell $\lambda_{f}$ should be interpreted as invertible.  

\begin{Example}
In \Cat the colax limit of a functor $F:A \to B$ is given by the comma category $B/F$: this has objects $(x,\alpha:x \to Fa,a)$ and morphisms $(r,s):(x,\alpha,a) \to (y,\beta,b)$ given by pairs of arrows $r:x \to y \in \f B$ and $s:a \to b \in \f A$ rendering commutative the square on the left.
$$
\xy
(0,0)*+{x}="00"; (20,0)*+{Fa}="10"; (0,-12)*+{y}="01"; (20,-12)*+{Fb}="11"; {\ar^{\alpha} "00";"10"};
{\ar_{\beta} "01";"11"};
{\ar_{r}"00";"01"};
{\ar^{Fs}"10";"11"};
\endxy
\hspace{2cm}
\xy
(0,0)*+{B/F}="a0"; (-15,-12)*+{A}="b0";(15,-12)*+{B}="c0";
{\ar_{p} "a0"; "b0"}; 
{\ar^{q} "a0"; "c0"}; 
{\ar_{F} "b0"; "c0"}; 
{\ar@{=>}_{\lambda}(0,-4)*+{};(0,-9)*+{}};
\endxy$$
The projections $p:B/F \to A$ and $q:B/F \to B$ of the colax cone $(p,\lambda,q)$ act on a morphism $(r,s)$ of $B/F$ as $p(r,s)=s:a \to b$ and $q(r,s)=r:x \to y$; the value of $\lambda:q \Rightarrow pF$ at $(x,\alpha,a)$ is simply the morphism $\alpha:x \to Fa$ itself.  
\\
The pseudolimit of $F$ is the full subcategory of $B/F$ whose objects are those pairs $(\alpha:x \to Fa,a)$ with $\alpha$ invertible, whilst the lax limit of $F$ is the comma category $F/B$.
\end{Example}
\begin{Example}\label{thm:LimitsMonoidal}
It is illuminating to consider the colax limit of a lax monoidal functor $F=(F,f,f_{0}):\overline{A} \rightsquigarrow \overline{B}$.   The forgetful \f F-functor $U:\fMonCat_{l} \to \Cat$ creates these limits: to see how this goes first consider the colax limit of the functor $F$, the comma category $B/F$ equipped with its colax cone $(p,\lambda,q)$ described above.  The crux of the argument is to show that this lifts uniquely to a colax cone in $\fMonCat_{l}$: that \emph{$B/F$ admits a unique monoidal structure such that $p$ and $q$ become strict monoidal and $\lambda$ a monoidal transformation}.  \newline{}
So consider two objects $(x,\alpha,a)$ and $(y,\beta,b)$ of $B/F$: if $p$ and $q$ are to be strict monoidal the tensor product $(x,\alpha,a)\otimes (y,\beta,b)$ must certainly be of the form $(x\otimes y,\theta,a\otimes b)$; furthermore the tensor condition for $\lambda$ to be a monoidal transformation interpreted at this pair asserts precisely that $(x,\alpha,a)\otimes (y,\beta,b)$ equals
$$
\xy
(0,0)*+{x\otimes y}="00"; (30,0)*+{Fa\otimes Fb}="10"; (60,0)*+{F(a\otimes b)}="20";
{\ar^{\alpha \otimes \beta} "00";"10"};
{\ar^{f_{a,b}}"10";"20"};
\endxy
$$
Likewise the unit condition for a monoidal transformation forces us to define the unit of $B/F$ to be $(f_{0}:i^{B} \to Fi^{A},i^{A})$.  For $p$ and $q$ to preserve tensor products of morphisms we must define the tensor product as $(r,s)\otimes (r^{\prime},s^{\prime})=(r\otimes r^{\prime},s\otimes s^{\prime})$ at morphisms of $B/F$ -- to say the resulting pair is a morphism of $B/F$ is then to say that the following square is commutative.
$$\xy
(0,0)*+{x\otimes y}="00"; (30,0)*+{Fa\otimes Fb}="10"; (60,0)*+{F(a\otimes b)}="20";(0,-12)*+{x^{\prime}\otimes y^{\prime}}="01"; (30,-12)*+{Fa^{\prime}\otimes Fb^{\prime}}="11"; (60,-12)*+{F(a^{\prime}\otimes b^{\prime})}="21";
{\ar^{\alpha \otimes \beta} "00";"10"};
{\ar ^{f_{a,b}}"10";"20"};
{\ar_{\alpha^{\prime} \otimes \beta^{\prime}} "01";"11"};
{\ar _{f_{a^{\prime},b^{\prime}}}"11";"21"};
{\ar_{r\otimes r^{\prime}}"00";"01"};
{\ar^{F(s\otimes s^{\prime})}"20";"21"};
{\ar^{Fs\otimes Fs^{\prime}}"10";"11"};
\endxy$$
The left square trivially commutes and the right square commutes by naturality of the $f_{a,b}$.  It remains to give the associator and the left and right unit constraints for the monoidal structure on $B/F$ -- this is where the coherence axioms for a lax monoidal functor finally come into play.  Certainly if $p$ and $q$ are to be strict monoidal they must preserve the associators strictly: this means that the associator at a triple of objects $((x,\alpha,a),(y,\beta,b),(z,\gamma,c))$ of $B/F$ must be given by $(\lambda^{B}_{x,y,z},\lambda^{A}_{a,b,c})$.  To say that this is a morphism of $B/F$ is equally to say that the composite square 
$$\xy
(0,0)*+{(x\otimes y)\otimes z}="00"; (40,0)*+{(Fa\otimes Fb)\otimes Fc}="10"; (78,0)*+{F(a\otimes b)\otimes Fc}="20"; (113,0)*+{F((a\otimes b)\otimes c)}="30"; 
(0,-12)*+{x\otimes (y\otimes z)}="01"; (40,-12)*+{Fa\otimes (Fb\otimes Fc)}="11"; (78,-12)*+{Fa\otimes F(b\otimes c)}="21"; (113,-12)*+{F(a\otimes (b\otimes c))}="31"; 
{\ar^<<<<<<<{(\alpha \otimes \beta)\otimes \gamma} "00";"10"};
{\ar ^{f_{a,b}\otimes 1}"10";"20"};
{\ar^{f_{a\otimes b,c}}"20";"30"};
{\ar_<<<<<<<{\alpha \otimes (\beta \otimes \gamma)} "01";"11"};
{\ar _{1\otimes f_{b,c}}"11";"21"};
{\ar_{f_{a,b\otimes c}}"21";"31"};
{\ar_{\lambda^{B}_{x,y,z}}"00";"01"};
{\ar^{\lambda^{B}_{Fa,Fb,Fc}}"10";"11"};
{\ar^{F\lambda^{A}_{a,b,c}}"30";"31"};
\endxy$$
is commutative.  The left square commutes by naturality of the associators in $B$ with the right square asserting exactly the associativity condition for a lax monoidal functor.  Similarly the left and right unit constraints at $(x,\alpha,a)$ must be given by $(\rho^{B}_{l}x,\rho^{A}_{l}a)$ and $(\rho^{B}_{r}x,\rho^{A}_{r}a)$ -- that these lift to isomorphisms of $B/F$ likewise correspond to the left and right unit conditions for a lax monoidal functor.  Having given the monoidal structure for $B/F$ it remains to check it verifies the axioms for a monoidal category, but all of these clearly follow from the corresponding axioms for $\overline{A}$ and $\overline{B}$ because $p$ and $q$ preserve the structure strictly and are jointly faithful.
\\
Finally one needs to verify that this uniquely lifted colax cone in $\fMonCat_{l}$ satisfies the universal property of the colax limit of $(F,f,f_{0})$ therein.  That $p$ and $q$ jointly detect tightness follows from the fact that they jointly reflect identity arrows -- from here it is straightforward to verify that $B/F$ has the universal property of the colax limit in $\MonCat_{l}$.  For $w \in \{p,c\}$ one constructs the $\bar{w}$-limit of a loose morphism in $\fMonCat_{w}$ in an entirely similar way.
\end{Example}
Observe that in lifting the colax cone $(p,\lambda,q)$ to $\fMonCat_{l}$ we used all of the coherence axioms for a lax monoidal functor, and indeed these generating coherence axioms are required for the colax cone to lift.  Thus while $l$-doctrinal adjunction is related to the \emph{laxness} -- orientation and non-invertibility -- of our lax monoidal functors, colax limits of loose morphisms concern the \emph{coherence} axioms these lax morphisms must satisfy.

\subsection{Loose morphisms as tight spans}
For $w \in \{l,p\}$ we suppose that \g A admits $\bar{w}$-limits of loose morphisms.  Then given $f:A \rightsquigarrow B$ we have the commutative triangle on the left below.
$$\xy
(0,5)*+{A}="a0"; (-15,-10)*+{A}="b0";(15,-10)*+{B}="c0";
{\ar_{1} "a0"; "b0"}; 
{\ar@{~>}^{f} "a0"; "c0"}; 
{\ar@{~>}_{f} "b0"; "c0"}; 
\endxy
\hspace{1.5cm}
\xy
\textnormal{=}
\endxy
\hspace{1.5cm}
\xy
(0,15)*+{A}="d0";
(0,0)*+{\bar{W}_{f}}="a0"; (-15,-15)*+{A}="b0";(15,-15)*+{B}="c0";
{\ar@{~>}_{r_{f}} "d0"; "a0"}; 
{\ar@/_1.5pc/_{1} "d0"; "b0"}; 
{\ar@{~>}@/^1.5pc/^{f} "d0"; "c0"}; 
{\ar_{p_{f}} "a0"; "b0"}; 
{\ar^{q_{f}} "a0"; "c0"}; 
{\ar@{~>}_{f} "b0"; "c0"}; 
{\ar@{=>}_{\lambda_{f}}(0,-5)*+{};(0,-11)*+{}};
\endxy$$
By the universal property of $\bar{W}_{f}$ we obtain a unique 1-cell $r_{f}:A \rightsquigarrow \bar{W}_{f}$ satisfying
\begin{equation}\label{eq:conR}
p_{f}\thing r_{f}=1, \textnormal{ } q_{f}\thing r_{f}=f \textnormal{ and }\lambda_{f}\thing r_{f}=1
\end{equation}
as expressed in the equality of pasting diagrams above.  Since $p_{f}$ and $q_{f}$ jointly detect tightness we also have that
\begin{equation}\label{eq:rTight}
r_{f}\textnormal{ is tight just when }f\textnormal{ is.}
\end{equation}

\begin{Proposition}\label{prop:factor}
Given $f:A \rightsquigarrow B$ as above we have a $w$-reflection $(1,p_{f} \dashv r_{f},\eta_{f})$ where $\eta_{f}:1 \Rightarrow r_{f}p_{f}$ is the unique 2-cell satisfying 
\begin{equation}\label{eq:adjR}
p_{f}\thing \eta_{f}=1\textnormal{ and }q_{f}\thing \eta_{f}=\lambda_{f}.
\end{equation}
\end{Proposition}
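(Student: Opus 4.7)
The plan is to apply the two-dimensional universal property of the $\bar{w}$-limit $\bar{W}_f$ twice: once to produce $\eta_f$, and again (via its uniqueness clause) to verify the non-trivial triangle identity.

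First I would construct $\eta_f$. The morphisms $1_{\bar{W}_f}$ and $r_f p_f$ from $\bar{W}_f$ to itself classify two $\bar{w}$-cones on the same base: respectively the universal one $(p_f, \lambda_f, q_f)$, and $(p_f, 1_{p_f}, f p_f)$, the latter computation using \eqref{eq:conR}. I would then invoke the 2-cell part of the universal property with $\theta_r = 1_{p_f} : p_f \Rightarrow p_f$ and $\theta_s = \lambda_f : q_f \Rightarrow f p_f$. The required compatibility reduces to $1_{fp_f} \circ \lambda_f = (f \cdot 1_{p_f}) \circ \lambda_f$ and so holds trivially, yielding the unique $\eta_f : 1 \Rightarrow r_f p_f$ satisfying \eqref{eq:adjR}.

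Next I would verify the adjunction data $(1, p_f \dashv r_f, \eta_f)$. The counit equation $p_f r_f = 1_A$ is part of \eqref{eq:conR}, so taking the counit to be the identity makes sense; this reduces the first triangle identity to $p_f \thing \eta_f = 1$, which holds by construction. For the second triangle identity, $\eta_f \thing r_f = 1_{r_f}$, I would compute
\[
p_f \thing (\eta_f \thing r_f) = (p_f \thing \eta_f) \thing r_f = 1, \qquad q_f \thing (\eta_f \thing r_f) = (q_f \thing \eta_f) \thing r_f = \lambda_f \thing r_f = 1,
\]
using \eqref{eq:conR} and \eqref{eq:adjR}. Since $1_{r_f}$ has the same whiskerings with $p_f$ and $q_f$, uniqueness in the 2-cell universal property forces $\eta_f \thing r_f = 1_{r_f}$. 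Tightness of $p_f$ is automatic, as it is a projection of an $\f F$-categorical limit, so we obtain an $l$-reflection.

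Finally, for $w = p$, where $\lambda_f$ is invertible, I would repeat the construction of a 2-cell between the same two cones using $\theta_r = 1_{p_f}$ and $\theta_s = \lambda_f^{-1} : f p_f \Rightarrow q_f$, obtaining $\eta_f^{\prime} : r_f p_f \Rightarrow 1$. The whiskerings of $\eta_f^{\prime} \circ \eta_f$ and $\eta_f \circ \eta_f^{\prime}$ with $p_f$ and $q_f$ are identities by construction, so the same uniqueness argument shows both composites are identities, whence $\eta_f$ is invertible and $(1, p_f \dashv r_f, \eta_f)$ is a $p$-reflection. There is no real obstacle here; the proof is a routine bookkeeping exercise driven entirely by the two universal properties of $\bar{W}_f$.
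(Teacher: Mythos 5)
Your proof is correct and follows essentially the same route as the paper: construct $\eta_f$ from the 2-dimensional universal property of $\bar{W}_f$ with $\theta_r=1$, $\theta_s=\lambda_f$, and verify the nontrivial triangle identity $\eta_f\thing r_f=1$ by whiskering with $p_f$ and $q_f$ and using the uniqueness clause. The only (minor) divergence is in the case $w=p$, where the paper deduces invertibility of $\eta_f$ from the joint conservativity of the projections, while you build the inverse explicitly from the same universal property — both are routine and equivalent.
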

\begin{proof}
Let us consider firstly the case $w=l$.  We need to give a unit $\eta_{f}:1 \Rightarrow r_{f}.p_{f}$.  To give such a 2-cell is, by the 2-dimensional universal property of $C_{f}$, equally to give 2-cells $\theta_{1}:p_{f}\thing (1) \Rightarrow p_{f}\thing (r_{f}\thing p_{f})$ and $\theta_{2}:q_{f}\thing (1) \Rightarrow q_{f}\thing (r_{f}\thing p_{f})$ satisfying $\theta_{1} \circ \lambda_{f}=(\lambda_{f}\thing r_{f}\thing p_{f}) \circ \theta_{2}$.  We take $\theta_{1}$ to be the identity and $\theta_{2}$ to be $\lambda_{f}:q_{f} \Rightarrow f\thing p_{f}$; the required equality involving $\theta_{1}$ and $\theta_{2}$ is then the assertion that $\lambda_{f}$ equals itself.  We thus obtain a unique $\eta_{f}:1 \Rightarrow r_{f}\thing p_{f}$ such $p_{f}\thing \eta_{f}=1$ and $q_{f}\thing \eta_{f}=\lambda_{f}$.  If the identity 2-cell $p_{f}\thing q_{f}=1$ is to be the counit of the adjunction then the triangle equations become $p_{f}\thing \eta_{f}=1$ and $\eta_{f}\thing r_{f}=1$.  So it remains to check that $\eta_{f}\thing r_{f}=1$ for which it suffices, again by the 2-dimensional universal property of $C_{f}$, to show that $p_{f}\thing \eta_{f}\thing r_{f}=1$ and $q_{f}\thing \eta_{f}\thing r_{f}=1$.  The first of these holds since $p_{f}\thing \eta_{f}=1$; the second since $q_{f}\thing \eta_{f}=\lambda_{f}$ and $\lambda_{f}\thing r_{f}=1$.\newline{}
The case $w=p$ is essentially identical -- the key point is that the 2-cells $\theta_{1}=1$ and $\theta_{2}=\lambda_{f}$ used above to construct $\eta_{f}$ are now both invertible.  That $\eta_{f}$ is itself invertible follows from the fact that $p_{f}$ and $q_{f}$ are jointly conservative -- this conservativity follows from the 2-dimensional universal property of $P_{f}$.
\end{proof} 
The above constructions have their genesis in the proof of Theorem 4.2 of \cite{Blackwell1989Two-dimensional}, in which \f F-categorical aspects of pseudolimits of arrows in $\TAlg_{p}$ were used to study establish properties of pseudomorphism classifiers.  If we ignore \f F-categorical aspects then the above constructions and resulting factorisations $f=q_{f}\thing r_{f}$ have appeared in other contexts too.  In the pseudolimit case the factorisation is the (trivial cofibration, fibration)-factorisation of the natural model structure on a 2-category \cite{Lack2007Homotopy-theoretic}.  In \Cat the factorisation $q_{f}\thing r_{f}:A \to C_{f} \to B$ of a functor $f$ through its colax limit coincides with its factorisation $A \to B/f \to B$ through the comma category $B/f$ -- this is the factorisation $(Lf,Rf)$ of a natural weak factorisation system on \Cat described in \cite{Grandis2006Natural}.  

Let us return to $f:A \rightsquigarrow B$ as in Proposition~\ref{prop:factor}.  By that result we have a span of tight morphisms (\emph{a tight span}) as below:
$$
\xy
(0,0)*+{A}="0";(20,0)*+{\bar{W}_{f}}="1";(40,0)*+{B}="2";
{\ar_{p_{f} \dashv r_{f}} "1"; "0"}; {\ar^{q_{f}} "1"; "2"}; 
\endxy
$$
whose left leg $p_{f}$ is equipped with the structure of a $w$-reflection $(1,p_{f} \dashv r_{f},\eta_{f})$.  More generally let us use the term \emph{$w$-span} to refer to a tight span equipped with the structure of a $w$-reflection on its left leg.  Now let $\bar{W}_{f}:A \hto B$ denote the $w$-span just described.  Given $f:A \rightsquigarrow B$ and $g:B \rightsquigarrow C$ we are going to show that $\bar{W}_{f}:A \hto B$ and $\bar{W}_{g}:B \hto C$ can be composed to give a $w$-span $\bar{W}_{g}\bar{W}_{f}:A \hto C$ and furthermore we will study the relationship between $\bar{W}_{g}\bar{W}_{f}$ and $\bar{W}_{gf}$.\newline{}  
In order to consider composition of such spans we will need to consider \emph{tight pullbacks}.  Given tight morphisms $f:A \to C$ and $g:B \to C$ in \g A the tight pullback $D$ of $f$ and $g$
$$\xy
(0,0)*+{D}="a0"; (15,0)*+{A}="b0";(0,-10)*+{B}="c0";(15,-10)*+{C}="d0";
{\ar^{p} "a0"; "b0"}; 
{\ar_{g} "c0"; "d0"}; 
{\ar_{q} "a0"; "c0"};
{\ar^{f} "b0"; "d0"};  
\endxy$$
is the pullback in the 2-category $\f A_{\lambda}$ with, moreover, both projections $p$ and $q$ tight and jointly detecting tightness.  This is equally to say that $D$ is a pullback in $\f A_{\tau}$ which is preserved by the inclusion $j:\f A_{\tau} \to \f A_{\lambda}$.

\begin{Lemma}\label{prop:pullbacks1}
Let $w \in \{l,p\}$ and \g A admit $\bar{w}$-limits of loose morphisms.  At $f:A \rightsquigarrow B$ consider the induced tight projection $p_{f}:\bar{W}_{f} \to A$ from the limit.  The tight pullback of $p_{f}$ along any tight morphism $g:C \to A$ exists.
\end{Lemma}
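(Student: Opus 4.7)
The plan is to construct the pullback as the $\bar{w}$-limit of the composite loose morphism $fg:C\rightsquigarrow B$ and show it has the required universal property.

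First I would form the composite loose morphism $fg:C\rightsquigarrow B$ (this is loose since composition with the tight $g$ preserves looseness in $\f A_\lambda$) and take its $\bar{w}$-limit, yielding tight projections $p_{fg}:\bar W_{fg}\to C$ and $q_{fg}:\bar W_{fg}\to B$ together with a cone 2-cell $\lambda_{fg}:q_{fg}\Rightarrow fg\thing p_{fg}$. The triple $(g\thing p_{fg},\lambda_{fg},q_{fg})$ constitutes a $\bar w$-cone on $f$ with apex $\bar W_{fg}$, so by the 1-dimensional universal property of $\bar W_f$ it induces a unique morphism $h:\bar W_{fg}\to \bar W_f$ satisfying $p_f\thing h=g\thing p_{fg}$, $q_f\thing h=q_{fg}$ and $\lambda_f\thing h=\lambda_{fg}$. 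Since $p_f\thing h$ and $q_f\thing h$ are both tight (as composites of tight morphisms), the joint detection of tightness by $(p_f,q_f)$ shows $h$ is tight. We thus obtain a commuting square of tight morphisms with projections $h$ and $p_{fg}$, which I claim is the tight pullback.

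To verify the 1-dimensional universal property, suppose $r:X\to C$ and $s:X\to \bar W_f$ are tight with $g\thing r=p_f\thing s$. Then $(r,\lambda_f\thing s,q_f\thing s)$ is a $\bar w$-cone on $fg$ (the equation $f\thing g\thing r=f\thing p_f\thing s$ converts the cone $(p_f\thing s,\lambda_f\thing s,q_f\thing s)$ on $f$ into a cone on $fg$), and so it induces a unique $t:X\to \bar W_{fg}$ with $p_{fg}\thing t=r$, $q_{fg}\thing t=q_f\thing s$ and $\lambda_{fg}\thing t=\lambda_f\thing s$. One then checks $h\thing t=s$ using the universal property of $\bar W_f$: the two morphisms agree after post-composition with $p_f$, with $q_f$, and with $\lambda_f$. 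Uniqueness of $t$ among tight factorisations follows from uniqueness in the universal property of $\bar W_{fg}$ since any $t'$ with $p_{fg}\thing t'=r$ and $h\thing t'=s$ automatically satisfies $q_{fg}\thing t'=q_f\thing h\thing t'=q_f\thing s$.

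For the 2-dimensional universal property, compatible 2-cells between such factorisations are transported through $\bar W_f$ and then back through the 2-dimensional universal property of $\bar W_{fg}$ in a routine manner. Finally, for joint detection of tightness: if $p_{fg}\thing t$ and $h\thing t$ are tight then $q_{fg}\thing t=q_f\thing h\thing t$ is tight (since $q_f$ is tight), and hence $t$ itself is tight by the joint tightness detection of $(p_{fg},q_{fg})$. The main bookkeeping obstacle is translating cones on $f$ through $g$ into cones on $fg$ and back, particularly ensuring the 2-cell data $\lambda_f\thing s$ matches $\lambda_{fg}\thing t$ consistently; everything else reduces cleanly to the two universal properties of $\bar W_f$ and $\bar W_{fg}$ and the tightness detection built into $\f F$-categorical limits.
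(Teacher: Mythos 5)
Your proposal is correct and takes essentially the same route as the paper: the paper also exhibits the tight pullback as $\bar{W}_{fg}$, inducing a tight map $\bar{W}_{fg}\to\bar{W}_{f}$ from the composite cone and then appealing to the universal property of $\bar{W}_{fg}$, which you simply verify in more detail. (One tiny point: in your uniqueness step you should also note $\lambda_{fg}\thing t'=\lambda_{f}\thing h\thing t'=\lambda_{f}\thing s$, which is immediate from $\lambda_{f}\thing h=\lambda_{fg}$.)
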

\begin{proof}
In fact the tight pullback is given by $\bar{W}_{fg}$, the $w$-limit of the composite $fg:C \rightsquigarrow A$.  For observe that by the universal property of $\bar{W}_{f}$ the $\overline{w}$-cone $(g\thing p_{fg},\lambda_{fg},q_{fg})$ induces a unique tight map $t:\bar{W}_{fg} \to \bar{W}_{g}$ such that the left square below commutes
$$\xy
(0,0)*+{\bar{W}_{fg}}="00"; (20,0)*+{\bar{W}_{f}}="10"; (40,0)*+{B}="20";(0,-12)*+{C}="01"; (20,-12)*+{A}="11"; (40,-12)*+{B}="21";
{\ar^{t} "00";"10"};
{\ar ^{q_{f}}"10";"20"};
{\ar_{g} "01";"11"};
{\ar @{~>}_{f}"11";"21"};
{\ar_{p_{fg}}"00";"01"};
{\ar^{1}"20";"21"};
{\ar_{p_{f}}"10";"11"};
{\ar@{=>}_{\lambda_{f}}(32,-3)*+{};(28,-9)*+{}};
\endxy$$
and such that $\lambda_{f}\thing t=\lambda_{gf}$.  Now the universal property of $\bar{W}_{fg}$ implies that the left square is a tight pullback.
\end{proof}
\begin{Lemma}\label{prop:pullbacks2}
For $w \in \{l,p\}$ consider a tight pullback square
$$\xy
(0,0)*+{A}="00"; (15,0)*+{B}="10";(0,-10)*+{C}="01";(15,-10)*+{D}="11";
{\ar^{r} "00"; "10"}; 
{\ar_{s} "01"; "11"}; 
{\ar_{f_{1}} "00"; "01"};
{\ar^{f_{2}} "10"; "11"};  
\endxy$$
and $w$-reflection $(1,f_{2} \dashv g_{2},\eta_{2})$.  There exists a unique $w$-reflection $(1,f_{1} \dashv g_{1},\eta_{1})$ such that $(r,s):f_{1} \to f_{2}$ is a morphism of $w$-reflections.
\end{Lemma}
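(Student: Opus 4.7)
The plan is to construct $g_1$, then $\eta_1$, using the universal properties of the tight pullback applied, respectively, to 1-cells and 2-cells; then to verify the triangle equations, invertibility in the $w=p$ case, and uniqueness. Crucially, since a tight pullback is also a pullback in $\f A_{\lambda}$, we may factor loose cones through it, which we will need because the right adjoints $g_{1}$ and $g_{2}$ are loose.

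First, to define $g_{1}:C \rightsquigarrow A$, I would apply the universal property of the pullback $A$ in $\f A_{\lambda}$ to the cone given by $1_{C}:C \to C$ and $g_{2}\thing s:C \rightsquigarrow B$. The required compatibility $s\thing 1_{C} = f_{2}\thing g_{2}\thing s$ is immediate from the identity counit $f_{2}\thing g_{2} = 1_{D}$ of the given $w$-reflection. This yields a unique loose $g_{1}:C \rightsquigarrow A$ with $f_{1}\thing g_{1} = 1_{C}$ and $r\thing g_{1} = g_{2}\thing s$; the second equation is already the ``mate is identity'' condition of Lemma~\ref{prop:morphAdj} for the square $(r,s):f_{1}\to f_{2}$.

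Next, I would construct $\eta_{1}:1_{A} \Rightarrow g_{1}\thing f_{1}$ using the 2-dimensional universal property of the pullback $A$. The two required 2-cells are $1:f_{1}\thing 1_{A} \Rightarrow f_{1}\thing g_{1}\thing f_{1} = f_{1}$ and $\eta_{2}\thing r:r \Rightarrow g_{2}\thing f_{2}\thing r = r\thing g_{1}\thing f_{1}$; their compatibility under $s$ and $f_{2}$ reduces, after simplification, to the triangle equation $(f_{2}\thing \eta_{2})=1$, so an $\eta_{1}$ exists uniquely with $f_{1}\thing \eta_{1}=1$ and $r\thing \eta_{1}=\eta_{2}\thing r$. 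The first triangle equation for $f_{1}\dashv g_{1}$ with identity counit is the first of these identities; for the second, $\eta_{1}\thing g_{1}=1$, I would invoke the 2-dimensional universal property of $A$ once more, checking the two equations $f_{1}\thing \eta_{1}\thing g_{1}=1$ and $r\thing \eta_{1}\thing g_{1} = \eta_{2}\thing r\thing g_{1} = \eta_{2}\thing g_{2}\thing s = 1$, the last by the triangle equation for $f_{2}\dashv g_{2}$.

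For the case $w=p$, the same 2-dimensional universal property supplies an inverse to $\eta_{1}$: one repeats the construction with $\eta_{2}^{-1}\thing r$ in place of $\eta_{2}\thing r$, and the induced 2-cell is a two-sided inverse by the uniqueness clause of the 2-dimensional universal property. Finally, the pair $(r,s)$ becomes a morphism of $w$-reflections by construction, since $r\thing g_{1}=g_{2}\thing s$ and $r\thing \eta_{1}=\eta_{2}\thing r$. Uniqueness of the entire data is forced: any candidate $g_{1}'$ making $(r,s)$ a morphism of $w$-reflections must satisfy $f_{1}\thing g_{1}'=1_{C}$ and $r\thing g_{1}'=g_{2}\thing s$ and so coincide with $g_{1}$ by the 1-dimensional universal property; any candidate unit must satisfy $f_{1}\thing \eta_{1}'=1$ and $r\thing \eta_{1}'=\eta_{2}\thing r$ and so coincide with $\eta_{1}$ by the 2-dimensional one. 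The only genuine subtlety, and hence the main obstacle, is being careful that all these applications of the pullback universal property are legitimate in $\f A_{\lambda}$ rather than $\f A_{\tau}$, since $g_{1}$ and $g_{2}$ are generally not tight; this is exactly why the hypothesis of a \emph{tight} pullback, which by definition is also a pullback in $\f A_{\lambda}$, is used.
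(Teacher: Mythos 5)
Your proposal is correct and follows essentially the same route as the paper: $g_{1}$ is induced by the one-dimensional universal property of the tight pullback applied to the cone $(1_{C},\, g_{2}\thing s)$, $\eta_{1}$ by the two-dimensional property via $f_{1}\thing\eta_{1}=1$ and $r\thing\eta_{1}=\eta_{2}\thing r$, with the remaining triangle identity and uniqueness checked against the projections. The only (inessential) difference is in the case $w=p$, where the paper deduces invertibility of $\eta_{1}$ from the projections being jointly conservative, while you construct the inverse explicitly by the same universal property.
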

\begin{proof}
First suppose that $w=l$.  Then the left square below
$$\xy
(0,0)*+{C}="00"; (15,0)*+{B}="10";(0,-10)*+{C}="01";(15,-10)*+{D}="11";
{\ar@{~>}^{g_{2}\thing s} "00"; "10"}; 
{\ar_{s} "01"; "11"}; 
{\ar_{1} "00"; "01"};
{\ar^{f_{2}} "10"; "11"};  
\endxy
\hspace{2cm}
\xy
(0,0)*+{C}="00"; (15,0)*+{D}="10";(0,-10)*+{A}="01";(15,-10)*+{B}="11";
{\ar^{s} "00"; "10"}; 
{\ar_{r} "01"; "11"}; 
{\ar@{~>}_{g_{1}} "00"; "01"};
{\ar@{~>}^{g_{2}} "10"; "11"};  
\endxy$$
commutes and so induces a unique loose morphism $g_{1}:C \rightsquigarrow A$ such that $f_{1}\thing g_{1}=1$ and such that the right square commutes.  These necessary commutativities will ensure the claimed uniqueness.  By the 2-dimensional universal property of the pullback there exists a unique 2-cell $\eta_{1}:1 \Rightarrow g_{1}\thing f_{1}$ such that $r\thing \eta_{1}=\eta_{2}\thing r$ and $f_{1}\thing \eta_{1}=1$.  The other triangle equation $\eta_{1}\thing f_{1}=1$ also follows from the universal property of the pullback.  Since $(r,s)$ commutes with both adjoints and the units it is a morphism of $l$-reflections.  Note that if $\eta_{2}$ is invertible then, since the pullback projections are jointly conservative, so too is $\eta_{1}$.  This gives the case $w=p$.
\end{proof}
Now given $f:A \rightsquigarrow B$ and $g:B \rightsquigarrow C$ we can form the composite span
$$\xy
(0,0)*+{A}="00";(15,15)*+{\bar{W}_{f}}="11";(30,0)*+{B}="20";(45,15)*+{\bar{W}_{g}}="31";(60,0)*+{C}="40";(30,30)*+{\bar{W}_{g}\bar{W}_{f}}="22"; 
{\ar_{p_{f}\dashv r_{f}} "11"; "00"};
{\ar_{q_{f}} "11"; "20"}; 
{\ar_{p_{g}\dashv r_{g}} "31"; "20"};
{\ar^{q_{g}} "31"; "40"}; 
{\ar_{p_{g,f}\dashv r_{g,f}} "22"; "11"};
{\ar^{q_{g,f}} "22"; "31"};  
\endxy$$
in which the central square is the tight pullback of $p_{g}$ along $q_{f}$.  (This pullback exists by Lemma~\ref{prop:pullbacks1}).  By Lemma~~\ref{prop:pullbacks2} there exists a unique $w$-reflection $(1,p_{g,f}\dashv r_{g,f},\eta_{g,f})$ such that 
\begin{equation}\label{eq:spancomp}
(q_{g,f},q_{f}):p_{g,f} \to p_{g}\textnormal{ is a morphism of }w\textnormal{-reflections.}
\end{equation}
We can then compose the $w$-reflections $p_{f} \dashv r_{f}$ and $p_{g,f} \dashv r_{g,f}$ to obtain another $w$-reflection $p_{f}\thing p_{g,f} \dashv r_{g,f}\thing r_{f}$, so that the outer span becomes a $w$-span $\bar{W}_{g}\bar{W}_{f}:A \hto C$.\newline{}
Let us consider the relationship between $\bar{W}_{g}\bar{W}_{f}$ and $\bar{W}_{gf}$.  By the universal property of $\bar{W}_{gf}$ the $\overline{w}$-cone left below 
$$\xy
(15,15)*+{\bar{W}_{g}\bar{W}_{f}}="f0"; 
(0,0)*+{\bar{W}_{f}}="a0"; (-15,-15)*+{A}="b0";(15,-15)*+{B}="c0";
{\ar_{p_{f}} "a0"; "b0"}; 
{\ar^{q_{f}} "a0"; "c0"}; 
{\ar@{~>}_{f} "b0"; "c0"}; 
{\ar@{=>}_{\lambda_{f}}(0,-5)*+{};(0,-11)*+{}};
(30,0)*+{\bar{W}_{g}}="d0";(45,-15)*+{C}="e0";
{\ar_{p_{g}} "d0"; "c0"}; 
{\ar^{q_{g}} "d0"; "e0"}; 
{\ar@{~>}_{g} "c0"; "e0"}; 
{\ar@{=>}_{\lambda_{g}}(30,-5)*+{};(30,-11)*+{}};
{\ar_{p_{g,f}} "f0"; "a0"}; 
{\ar^{q_{g,f}} "f0"; "d0"}; 
\endxy
\hspace{0.3cm}
\textnormal{=}
\hspace{0.3cm}
\xy
(15,15)*+{\bar{W}_{g}\bar{W}_{f}}="f0"; 
(0,0)*+{\bar{W}_{f}}="a0"; (-15,-15)*+{A}="b0";(15,-15)*+{B}="c0";
{\ar_{p_{f}} "a0"; "b0"}; 
{\ar@{~>}_{f} "b0"; "c0"}; 
{\ar@{=>}_{\lambda_{gf}}(15,-5)*+{};(15,-11)*+{}};
(30,0)*+{\bar{W}_{g}}="d0";(45,-15)*+{C}="e0";
{\ar^{q_{g}} "d0"; "e0"}; 
{\ar@{~>}_{g} "c0"; "e0"}; 
{\ar_{p_{g,f}} "f0"; "a0"}; 
{\ar^{q_{g,f}} "f0"; "d0"}; 
(15,0)*+{\bar{W}_{gf}}="g0"; 
{\ar|{k_{g,f}} "f0"; "g0"}; 
{\ar^{p_{gf}} "g0"; "b0"}; 
{\ar_{q_{gf}} "g0"; "e0"}; 
\endxy
$$
induces a unique tight arrow $k_{g,f}:\bar{W}_{g}\bar{W}_{f} \to \bar{W}_{gf}$ satisfying the equations
\begin{equation}\label{eq:comp}
p_{gf}\thing k_{g,f}=p_{f}\thing p_{g,f}\textnormal{, }q_{gf}\thing k_{g,f}=q_{g}\thing q_{g,f}
\textnormal{ and } 
\lambda_{gf}\thing k_{g,f}=(g\thing \lambda_{f}\thing p_{g,f})\circ (\lambda_{g}\thing q_{g,f})
\end{equation}
equally expressed in the equality of pasting diagrams above.  Furthermore
\begin{Lemma}\label{prop:spanmap}
Let $w \in \{l,p\}$.  In the span map
$$\xy
(0,0)*+{A}="00";(0,12)*+{A}="01";(25,12)*+{\bar{W}_{g}\bar{W}_{f}}="11";(25,0)*+{\bar{W}_{gf}}="10";(50,0)*+{B}="20";(50,12)*+{B}="21";
{\ar_{p_{f}\thing p_{g,f}} "11"; "01"};
{\ar_{p_{gf}} "10"; "00"};
{\ar^{q_{g}\thing q_{g,f}} "11"; "21"}; 
{\ar^{q_{gf}} "10"; "20"}; 
{\ar^{k_{g,f}} "11"; "10"}; 
{\ar^{1} "01"; "00"}; 
{\ar^{1} "21"; "20"}; 
\endxy
$$
the commuting square $(k_{g,f},1):p_{f}\thing p_{g,f} \to p_{gf}$ is a morphism of $w$-reflections.
\end{Lemma}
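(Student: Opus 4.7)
The goal is to check the three conditions defining a morphism of $w$-reflections: the defining square already commutes (this is the first equation of (\ref{eq:comp})); compatibility with counits is automatic since both counits are identities; so we need the right-adjoint square $k_{g,f}\thing r_{g,f}\thing r_{f} = r_{gf}\thing 1$ and the unit compatibility $k_{g,f}\thing \eta_{1} = \eta_{gf}\thing k_{g,f}$, where $\eta_{1} = (r_{g,f}\thing \eta_{f}\thing p_{g,f})\circ \eta_{g,f}$ is the composite unit of $p_{f}\thing p_{g,f} \dashv r_{g,f}\thing r_{f}$. Both equations will be verified using the universal property of $\bar{W}_{gf}$; the crucial external inputs are (\ref{eq:spancomp}) together with Proposition~\ref{prop:factor} and the defining equations (\ref{eq:conR}), (\ref{eq:adjR}), (\ref{eq:comp}).

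For the right-adjoint equation I would apply the $1$-dimensional universal property of $\bar{W}_{gf}$, checking that the loose morphism $k_{g,f}\thing r_{g,f}\thing r_{f}:A \rightsquigarrow \bar{W}_{gf}$ satisfies the three equations characterising $r_{gf}$. Composing with $p_{gf}$ and using (\ref{eq:comp}) followed by the triangle equations $p_{g,f}\thing r_{g,f}=1$ and $p_{f}\thing r_{f}=1$ gives the identity; composing with $q_{gf}$ and using (\ref{eq:comp}) gives $q_{g}\thing q_{g,f}\thing r_{g,f}\thing r_{f}$, which reduces to $g\thing f$ upon applying $q_{g,f}\thing r_{g,f} = r_{g}\thing q_{f}$ (this is right-adjoint commutativity for the morphism of $w$-reflections $(q_{g,f},q_{f})$ from (\ref{eq:spancomp})) and then $q_{g}\thing r_{g}=g$, $q_{f}\thing r_{f}=f$; finally composing with $\lambda_{gf}$ and expanding by (\ref{eq:comp}) yields a product of two factors, each killed by $\lambda_{g}\thing r_{g}=1$ or $\lambda_{f}\thing r_{f}=1$.

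For the unit compatibility I would use the $2$-dimensional universal property of $\bar{W}_{gf}$: both $\eta_{gf}\thing k_{g,f}$ and $k_{g,f}\thing \eta_{1}$ are parallel $2$-cells $k_{g,f} \Rightarrow r_{gf}\thing p_{gf}\thing k_{g,f}$, so it suffices that they agree after whiskering with $p_{gf}$ and $q_{gf}$. Whiskering with $p_{gf}$, both sides give the identity: the left by $p_{gf}\thing \eta_{gf}=1$ from (\ref{eq:adjR}), the right by (\ref{eq:comp}) and the triangle identity $p_{f}\thing p_{g,f}\thing \eta_{1}=1$ for the composite adjunction. Whiskering with $q_{gf}$, the left gives $\lambda_{gf}\thing k_{g,f} = (g\thing \lambda_{f}\thing p_{g,f})\circ (\lambda_{g}\thing q_{g,f})$ by (\ref{eq:comp}); the right gives $q_{g}\thing q_{g,f}\thing \eta_{1}$, which expanding $\eta_{1}$ breaks into two factors, one simplified using $q_{g,f}\thing r_{g,f} = r_{g}\thing q_{f}$ together with $q_{f}\thing \eta_{f} = \lambda_{f}$ and $q_{g}\thing r_{g}=g$, and the other simplified using unit compatibility $q_{f}\thing \eta_{g,f} = \eta_{g}\thing q_{g,f}$ (from (\ref{eq:spancomp})) together with $q_{g}\thing \eta_{g}=\lambda_{g}$ from (\ref{eq:adjR}); the two factors assemble into exactly the same composite.

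The main obstacle I expect is not conceptual but bookkeeping: unwinding the composite unit $\eta_{1}$ correctly and keeping track of which whiskerings come from (\ref{eq:adjR}), which from the morphism-of-$w$-reflections data (\ref{eq:spancomp}), and which from the defining relations (\ref{eq:comp}) of $k_{g,f}$. Once this bookkeeping is organised by splitting the $q_{gf}$-whisker calculation along the two summands of $\eta_{1}$, the two computations line up exactly with the two factors of $\lambda_{gf}\thing k_{g,f}$ produced by (\ref{eq:comp}), completing the verification.
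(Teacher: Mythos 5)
Your proof is correct, but it follows a different route from the paper's. You verify the definition of a morphism of $w$-reflections directly: the right-adjoint square $k_{g,f}\thing r_{g,f}\thing r_{f}=r_{gf}$ via the one-dimensional universal property of $\bar{W}_{gf}$, and the unit compatibility via the two-dimensional one. The paper instead invokes Lemma~\ref{prop:morphAdj}: since the composite reflection $p_{f}\thing p_{g,f}\dashv r_{g,f}\thing r_{f}$ has identity counit, it suffices to show that the single mate 2-cell $\eta_{gf}\thing k_{g,f}\thing r_{g,f}\thing r_{f}$ is an identity, which is checked after whiskering with $p_{gf}$ and $q_{gf}$ (these jointly reflect identity 2-cells). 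The computational core is shared -- the $q_{gf}$- (respectively $\lambda_{gf}$-) whisker is expanded by \eqref{eq:comp} into the two factors $(g\thing\lambda_{f}\thing p_{g,f})$ and $(\lambda_{g}\thing q_{g,f})$, each killed using \eqref{eq:spancomp} and \eqref{eq:conR} exactly as you do -- but the mate criterion lets the paper avoid your separate unit-compatibility bookkeeping altogether, making the argument shorter; your version buys nothing extra beyond being self-contained at the level of the definition. One small slip: the unit compatibility coming from \eqref{eq:spancomp} should read $q_{g,f}\thing\eta_{g,f}=\eta_{g}\thing q_{g,f}$ (your $q_{f}\thing\eta_{g,f}$ does not typecheck, as $\eta_{g,f}$ lives at $\bar{W}_{g}\bar{W}_{f}$), though it is clear from the surrounding computation that this is what you intended and the argument goes through.
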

\begin{proof}
To show that $(k_{g,f},1):p_{f}\thing p_{g,f} \to p_{gf}$ is a morphism of $w$-reflections it suffices, by Lemma~\ref{prop:morphAdj}, to show that the mate of this square is an identity.  Because the $w$-reflection $p_{f}\thing p_{g,f} \dashv r_{g,f}\thing r_{f}$ has identity counit the mate of $(k_{g,f},1)$ is simply
$$\xy
(-20,-15)*+{A}="e0";(40,0)*+{\bar{W}_{gf}}="f0";
(0,0)*+{\bar{W}_{g}\bar{W}_{f}}="a0"; (20,0)*+{\bar{W}_{gf}}="b0";(0,-15)*+{A}="c0";(20,-15)*+{A}="d0";
{\ar^{k_{g,f}} "a0"; "b0"}; 
{\ar_{1} "c0"; "d0"}; 
{\ar|{p_{f}\thing p_{g,f}} "a0"; "c0"};
{\ar_{p_{gf}} "b0"; "d0"};  
{\ar@{~>}^{r_{g,f}\thing r_{f}} "e0"; "a0"}; 
{\ar_{1} "e0"; "c0"};   
{\ar^{1} "b0"; "f0"}; 
{\ar@{~>}_{r_{gf}} "d0"; "f0"};   
{\ar@{=>}_{\eta_{gf}}(24,-1)*+{};(28,-7)*+{}};
\endxy$$
Now the 2-dimensional universal property of $\bar{W}_{gf}$ implies that the projections $p_{gf}$ and $q_{gf}$ jointly reflect identity 2-cells: see Lemma 3.1 of \cite{Lack2002Limits} in the case of the colax limit.  Therefore it suffices to show that the composite of the above 2-cell with both $p_{gf}$ and $q_{gf}$ yields an identity.
One of the triangle equations for $p_{gf}\dashv r_{gf}$ gives $p_{gf}\thing \eta_{gf}$=1; thus it remains to show $q_{gf}\thing \eta_{gf}\thing k_{g,f}\thing r_{g,f}\thing r_{f}=1$.  By  ~\eqref{eq:adjR} this equals $\lambda_{gf}\thing k_{g,f} \thing r_{g,f}\thing r_{f}$ and by definition of $k_{g,f}$ (as in ~\eqref{eq:comp})  we have $\lambda_{gf}\thing k_{g,f}=(g\thing \lambda_{f}\thing p_{g,f})\circ (\lambda_{g}\thing q_{g,f})$.  Therefore it suffices to show that the 2-cells $(g\thing \lambda_{f}\thing p_{g,f})\thing r_{g,f}\thing r_{f}$ and $(\lambda_{g}\thing q_{g,f})\thing r_{g,f}\thing r_{f}$ are identities separately.  
With regards the former we have that $\lambda_{f}\thing p_{g,f}\thing r_{g,f}\thing r_{f}=\lambda_{f}\thing r_{f}=1$ where we first use that $p_{g,f}\thing r_{g,f}=1$ and then  ~\eqref{eq:conR}; for the other composite we have $(\lambda_{g}\thing q_{g,f})\thing r_{g,f}\thing r_{f}=\lambda_{g}\thing r_{g}\thing q_{f}\thing r_{f}=1$.  The first equation holds by ~\eqref{eq:spancomp} and the second equation by  ~\eqref{eq:conR}.
\end{proof}

\begin{Remark}
Given any \f F-category \g A one can define a bicategory $Span_{w}(\g A)$ of $w$-spans in \g A.  Their composition extends that described for $w$-spans of the form $\bar{W}_{f}:A \hto B$ above.  To ensure composites exist one allows only those $w$-spans whose left legs admit tight pullbacks along arbitrary tight maps.  Now when \g A admits $w$-limits of loose morphisms the assigment of $\bar{W}_{f}:A \hto B$ to $f$ can be extended to an identity on objects lax functor from the underlying category of $\f A_{\lambda}$ to $Span_{w}(\g A)$ with the span map $k_{g,f}:\bar{W}_{g}\bar{W}_{f} \to \bar{W}_{gf}$ describing one of the coherence constraints.
\end{Remark}

\subsection{Representing 2-cells via span transformations}
Lastly we consider how to represent a 2-cell $\alpha:f \Rightarrow g$ by a span map $\bar{W}_{f} \to \bar{W}_{g}$.  Here the cases $w=l$ and $w=p$ diverge.
\subsubsection{The case w=l}
Suppose that \g A admits colax limits of loose morphisms.  Given $\alpha:f \Rightarrow g$ the colax cone left below
$$\xy
(0,10)*+{C_{f}}="a0";
 (-20,-10)*+{A}="b0";(20,-10)*+{B}="c0";
{\ar_{p_{f}} "a0"; "b0"}; 
{\ar^{q_{f}} "a0"; "c0"}; 
{\ar@/^1pc/@{~>}^{f} "b0"; "c0"}; 
{\ar@{=>}_{\alpha}(0,-7)*+{};(0,-13)*+{}};
{\ar@/_1pc/@{~>}_{g} "b0"; "c0"}; 
{\ar@{=>}_{\lambda_{f}}(0,3)*+{};(0,-2)*+{}};
\endxy
\hspace{1cm}
\xy
\textnormal{=}
\endxy
\hspace{1cm}
\xy
(0,15)*+{C_{f}}="d0";
(0,0)*+{C_{g}}="a0"; (-20,-15)*+{A}="b0";(20,-15)*+{B}="c0";
{\ar_{c_{\alpha}} "d0"; "a0"}; 
{\ar@/_1.5pc/_{p_{f}} "d0"; "b0"}; 
{\ar@/^1.5pc/^{q_{f}} "d0"; "c0"}; 
{\ar_{p_{g}} "a0"; "b0"}; 
{\ar^{q_{g}} "a0"; "c0"}; 
{\ar@{~>}_{g} "b0"; "c0"}; 
{\ar@{=>}_{\lambda_{g}}(0,-5)*+{};(0,-11)*+{}};
\endxy$$
induces a unique \emph{tight} arrow $c_{\alpha}:C_{f} \to C_{g}$ satisfying
\begin{equation}\label{eq:l2}
p_{f}=c_{\alpha}\thing p_{g}, \textnormal{ } q_{f}=c_{\alpha}\thing q_{g} \textnormal{ and }(\alpha \thing p_{f})\circ \lambda_{f}=\lambda_{g}\thing c_{\alpha}
\end{equation}
In particular the 2-cell $\alpha$ is represented by a span map $c_{\alpha}:C_{f} \to C_{g}$ as below.
$$\xy
(0,0)*+{A}="00";(0,12)*+{A}="01";(20,12)*+{C_{f}}="11";(20,0)*+{C_{g}}="10";(40,0)*+{B}="20";(40,12)*+{B}="21";
{\ar_{p_{f}} "11"; "01"};
{\ar_{p_{g}} "10"; "00"};
{\ar^{q_{f}} "11"; "21"}; 
{\ar^{q_{g}} "10"; "20"}; 
{\ar^{c_{\alpha}} "11"; "10"}; 
{\ar^{1} "01"; "00"}; 
{\ar^{1} "21"; "20"}; 
\endxy
$$
\begin{Lemma}\label{prop:l2}
Let $c_{\alpha}:C_{f} \to C_{g}$ be as above and let $m_{\alpha}$ denote the mate of the square $(c_{\alpha},1):p_{f} \to p_{g}$ through the adjunctions $p_{f} \dashv r_{f}$ and $p_{g} \dashv r_{g}$.  
$$\xy
(0,0)*+{A}="00";(0,12)*+{A}="01";(20,12)*+{C_{f}}="11";(40,0)*+{B}="20";(20,0)*+{C_{g}}="10";(40,12)*+{B}="21";
{\ar@{~>}^{r_{f}} "01"; "11"};
{\ar@{~>}_{r_{g}} "00"; "10"};
{\ar^{q_{f}} "11"; "21"}; 
{\ar_{q_{g}} "10"; "20"}; 
{\ar^{c_{\alpha}} "11"; "10"};
{\ar@{=>}^{m_{\alpha}}(8,9)*+{};(8,4)*+{}};
{\ar_{1} "01"; "00"}; 
{\ar^{1} "21"; "20"};  
\endxy$$
The composite 2-cell $q_{g}\thing m_{\alpha}$ equals $\alpha$.
\end{Lemma}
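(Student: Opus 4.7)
The plan is a direct computation that exploits the drastic simplification of the mate formula when both adjunctions involved have identity counit.

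First I would write $m_\alpha$ in closed form. Since $p_g\thing c_\alpha = p_f$ as morphisms, with identity filling 2-cell, by~\eqref{eq:l2}, and since the counits of both $p_f \dashv r_f$ and $p_g \dashv r_g$ are identities, the standard mate pasting collapses to
$$m_\alpha \;=\; \eta_g\thing c_\alpha\thing r_f,$$
viewed as a 2-cell $c_\alpha\thing r_f \Rightarrow r_g$ via the identifications $p_g\thing c_\alpha\thing r_f = p_f\thing r_f = 1$.

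Next I would whisker by $q_g$. The identity $q_g\thing \eta_g = \lambda_g$ from~\eqref{eq:adjR} yields $q_g\thing m_\alpha = \lambda_g\thing c_\alpha\thing r_f$. The third equation of~\eqref{eq:l2}, namely $\lambda_g\thing c_\alpha = (\alpha\thing p_f)\circ \lambda_f$, then rewrites this as
$$(\alpha\thing p_f\thing r_f)\circ (\lambda_f\thing r_f),$$
and applying $p_f\thing r_f = 1$ and $\lambda_f\thing r_f = 1$ from~\eqref{eq:conR} collapses the whole expression to $\alpha$.

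There is no real conceptual obstacle: identity counits make the mate just a whiskered unit, after which the three defining equations~\eqref{eq:conR},~\eqref{eq:adjR} and~\eqref{eq:l2} do all the work. The only care required is in keeping the order of the whiskerings and the source/target of each 2-cell straight.
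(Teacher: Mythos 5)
Your proposal is correct and is essentially the paper's own argument: the mate collapses to $\eta_g\thing c_\alpha\thing r_f$ because the counit of $p_f\dashv r_f$ is an identity (only that counit is actually needed, not both), and then whiskering with $q_g$ and applying \eqref{eq:adjR}, \eqref{eq:l2} and \eqref{eq:conR} gives $\alpha$ exactly as in the paper.
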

\begin{proof}
Because the counit of $p_{f} \dashv r_{f}$ is an identity the mate $m_{\alpha}$ is simply given by
$$\xy
(00,0)*+{A}="11";(30,0)*+{C_{f}}="31"; (00,-25)*+{A}="12";(30,-25)*+{C_{g}}="32";
(00,-15)*+{A}="a";(30,-10)*+{C_{g}}="b";
{\ar^{c_{\alpha}} "31"; "b"}; 
{\ar@{~>}_{r_{g}} "12"; "32"}; 
{\ar_{1} "a"; "12"}; 
{\ar@{~>}^{r_{f}} "11"; "31"}; 
{\ar_{1} "11"; "a"}; 
{\ar^{p_{f}} "31"; "a"}; 
{\ar_{p_{g}} "b"; "12"}; 
{\ar^{1} "b"; "32"}; 
{\ar@{=>}_{\eta_{g}}(24,-19)*+{};(18,-22)*+{}};
\endxy$$
Therefore $q_{g}\thing m_{\alpha}=q_{g}\thing \eta_{g}\thing c_{\alpha}\thing r_{f}=\lambda_{g}\thing c_{\alpha}\thing r_{f}=(\alpha \thing  p_{f}\thing r_{f})\circ (\lambda_{f}\thing r_{f})=\alpha \circ 1=\alpha$ where the second, third and fourth equalities use ~\eqref{eq:adjR}, ~\eqref{eq:l2} and ~\eqref{eq:adjR} respectively.
\end{proof}
\subsubsection{The case w=p}
Suppose that \g A admits pseudolimits of loose morphisms.  If $\alpha:f \Rightarrow g$ is \emph{invertible}
then we may construct a map $P_{f} \to P_{g}$ in essentially the same way as for colax limits of loose morphisms.  However this approach does not work in general.  The following lemma describes a representation that works for non-invertible 2-cells.  It is based upon the notion of a \emph{transformation} of \emph{anafunctors} \cite{Makkai1996Avoiding} -- see also \cite{Roberts2012Internal} and \cite{Bartels2006Higher}.  Anafunctors can be viewed as spans of categories and functors in which the left leg is a surjective on objects equivalence.  If we take $\g A= \Cat$ and functors $F,G:A \to B$ then the associated spans $P_{F},P_{G}:A \hto B$ are anafunctors.  In this setting the 2-cell $\rho_{\alpha}$ described below specifies precisely a transformation between the anafunctors $P_{F}$ and $P_{G}$.

\begin{Lemma}\label{prop:p2}
Given $\alpha:f \Rightarrow g$ consider the tight pullback left below (existing by Lemma~\ref{prop:pullbacks1})
$$\xy
%(-15,0)*+{(1.1)};
(0,-5)*+{K_{f,g}}="a0"; (20,-5)*+{P_{f}}="b0";(0,-25)*+{P_{g}}="c0";(20,-25)*+{A}="d0";
{\ar^{s_{f,g}} "a0"; "b0"}; 
{\ar_{p_{g}} "c0"; "d0"}; 
{\ar_{t_{f,g}} "a0"; "c0"};
{\ar^{p_{f}} "b0"; "d0"};  
{\ar^{u_{f,g}} "a0"; "d0"};  
\endxy
\hspace{1cm}
\xy
(-5,0)*+{A}="00"; (15,-15)*+{K_{f,g}}="1-1";(30,-15)*+{P_{f}}="2-1";(15,-30)*+{P_{g}}="1-2";(30,-30)*+{A}="2-2";
{\ar@{~>}^{v_{f,g}} "00"; "1-1"}; 
{\ar@/^1pc/@{~>}^{r_{f}} "00"; "2-1"}; 
%{\ar@{~>}^{r_{f}} "00"; "2-1"}; 
{\ar@/_1pc/@{~>}_{r_{g}} "00"; "1-2"}; 
{\ar^{s_{f,g}} "1-1"; "2-1"}; 
{\ar^{q_{f}} "2-1"; "2-2"};  
{\ar_{t_{f,g}} "1-1"; "1-2"};
{\ar_{q_{g}} "1-2"; "2-2"}; 
{\ar@{=>}^{\rho_{\alpha}}(25,-20)*+{};(20,-25)*+{}} 
\endxy$$
with diagonal denoted $u_{f,g}$.
\begin{enumerate}
\item There is a unique $p$-reflection $u_{f,g} \dashv v_{f,g}$ such that $(s_{f,g},1):u_{f,g} \to p_{f}$ and $(t_{f,g},1):u_{f,g} \to p_{g}$ are morphisms of $p$-reflections.  If $f$ and $g$ are tight so too is $v_{f,g}$.
\item There exists a unique 2-cell $\rho_{\alpha}:q_{f}\thing s_{f,g} \Rightarrow q_{g}\thing t_{f,g}$ such that $\rho_{\alpha}.v_{f,g}=\alpha$.
\end{enumerate}
\end{Lemma}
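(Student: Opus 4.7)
The strategy is to prove part (1) by direct appeal to the one- and two-dimensional universal properties of the tight pullback $K_{f,g}$, and then to exploit the fact that the resulting $u_{f,g} \dashv v_{f,g}$ is an adjoint equivalence to obtain part (2) from an equivalence of hom-categories.

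For part (1), suppose such a $p$-reflection $(1,u_{f,g} \dashv v_{f,g},\eta)$ exists. Since $(s_{f,g},1)$ and $(t_{f,g},1)$ must be morphisms of $p$-reflections, Lemma~\ref{prop:morphAdj} forces the right adjoints to commute: $s_{f,g} v_{f,g} = r_f$ and $t_{f,g} v_{f,g} = r_g$. As $p_f r_f = 1 = p_g r_g$ by~\eqref{eq:conR}, the one-dimensional universal property of the tight pullback $K_{f,g}$ produces a unique such $v_{f,g}$; when $f$ and $g$ are both tight, so are $r_f$ and $r_g$ by~\eqref{eq:rTight}, whence $v_{f,g}$ is tight as the pullback projections jointly detect tightness. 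The morphism conditions similarly force $s_{f,g} \eta = \eta_f s_{f,g}$ and $t_{f,g} \eta = \eta_g t_{f,g}$, and these data are compatible in the pullback because postcomposition with $p_f$ or $p_g$ yields $1_{u_{f,g}}$ via the triangle identities $p_f \eta_f = 1$ and $p_g \eta_g = 1$; the two-dimensional universal property of the pullback thus produces a unique such $\eta : 1 \Rightarrow v_{f,g} u_{f,g}$. One triangle identity is immediate: $u_{f,g} \eta = p_f s_{f,g} \eta = p_f \eta_f s_{f,g} = 1$; the other, $\eta v_{f,g} = 1$, follows from the two-dimensional pullback UP once one computes $s_{f,g}\eta v_{f,g} = \eta_f r_f = 1$ and $t_{f,g} \eta v_{f,g} = \eta_g r_g = 1$. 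Finally $\eta$ is invertible because its whiskerings $\eta_f s_{f,g}$ and $\eta_g t_{f,g}$ are invertible and the pullback projections are jointly conservative.

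For part (2), because $\eta$ is invertible and the counit is the identity, $u_{f,g} \dashv v_{f,g}$ is an adjoint equivalence in $\f A_{\lambda}$. Consequently the whiskering functor
\[
(-) \cdot v_{f,g} : \f A_{\lambda}(K_{f,g},B) \to \f A_{\lambda}(A,B)
\]
is an equivalence of categories, with pseudo-inverse $(-) \cdot u_{f,g}$, and so is fully faithful on every hom-set. Using $q_f s_{f,g} v_{f,g} = q_f r_f = f$ and $q_g t_{f,g} v_{f,g} = q_g r_g = g$ from~\eqref{eq:conR}, the induced map
\[
\f A_{\lambda}(K_{f,g},B)(q_f s_{f,g},q_g t_{f,g}) \to \f A_{\lambda}(A,B)(f,g), \qquad \rho \mapsto \rho v_{f,g},
\]
is a bijection, and $\rho_{\alpha}$ is defined to be the unique preimage of $\alpha$.

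The main obstacle is the bookkeeping in part (1): one must keep track of several joint universal properties of the tight pullback projections (detection of tightness, reflection of identity 2-cells, conservativity) and verify that every compatibility condition and triangle identity reduces either to a trivial composite or to a triangle identity for $p_f \dashv r_f$ or $p_g \dashv r_g$. Once this is in hand, part (2) is essentially formal: the pseudolimit assumption enters only through invertibility of $\lambda_f$ and $\lambda_g$, which is exactly what guarantees invertibility of $\eta_f$, $\eta_g$, and hence $\eta$, making $u_{f,g} \dashv v_{f,g}$ the adjoint equivalence that converts 2-cells below into 2-cells above.
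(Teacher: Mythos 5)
Your proof is correct and takes essentially the same approach as the paper's: in part (1) you build $v_{f,g}$ from $r_{f},r_{g}$ via the one-dimensional universal property of the tight pullback and the unit from the whiskerings $\eta_{f}\thing s_{f,g}$, $\eta_{g}\thing t_{f,g}$ via the two-dimensional one (with tightness from \eqref{eq:rTight} and detection of tightness), and in part (2) you use fully faithfulness of precomposition with the equivalence $v_{f,g}$, exactly as in the paper. The only quibble is that commutation of the right adjoints is part of the definition of a morphism of $p$-reflections rather than something extracted from Lemma~\ref{prop:morphAdj}, but nothing in your argument depends on this.
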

\begin{proof}
\begin{enumerate}
\item
Consider the diagram
$$\xy
(-5,0)*+{A}="00"; (15,-15)*+{K_{f,g}}="1-1";(30,-15)*+{P_{f}}="2-1";(15,-30)*+{P_{g}}="1-2";(30,-30)*+{A}="2-2";
{\ar@{~>}^{v_{f,g}} "00"; "1-1"}; 
{\ar@/^0.7pc/@{~>}^{r_{f}} "00"; "2-1"}; 
%{\ar@{~>}^{r_{f}} "00"; "2-1"}; 
{\ar@/_0.5pc/@{~>}^{r_{g}} "00"; "1-2"}; 
{\ar_{s_{f,g}} "1-1"; "2-1"}; 
{\ar^{p_{f}} "2-1"; "2-2"};  
{\ar^{t_{f,g}} "1-1"; "1-2"};
{\ar_{p_{g}} "1-2"; "2-2"}; 
\endxy$$
in which $v_{f,g}$ is the unique loose map satisfying $s_{f,g}\thing v_{f,g}=r_{f}$ and $t_{f,g}\thing v_{f,g}=r_{g}$.  It follows that $u_{f,g}\thing v_{f,g}=1$.  To give an invertible 2-cell $\theta_{f,g}:1 \cong v_{f,g}\thing u_{f,g}$ is, by the universal property of $K_{f,g}$, equally to give invertible 2-cells $\theta_{1}:s_{f,g} \cong s_{f,g}\thing v_{f,g}\thing u_{f,g}$ and $\theta_{2}:t_{f,g} \cong t_{f,g}\thing v_{f,g}\thing u_{f,g}$ satisfying $p_{f}\thing \theta_{1}=p_{g}\thing \theta_{2}$.  We set $\theta_{1}=\eta_{f}\thing s_{f,g}$ and $\theta_{2}=\eta_{g}\thing t_{f,g}$ noting that $p_{f}\thing \eta_{f}\thing s_{f,g}=1=p_{g}\thing \eta_{g}\thing s_{f,g}$.  The triangle equations for the $p$-reflection follow using the universal property of the pullback $K_{f,g}$ and that $s_{f,g}:u_{f,g} \to p_{f}$ and $t_{f,g}:u_{f,g} \to p_{g}$ are morphisms of $p$-reflections follows from the construction of $u_{f,g}$ and $\theta_{f,g}$.\newline{}
If $f$ and $g$ are tight so too, by ~\eqref{eq:rTight}, are $r_{f}$ and $r_{g}$.  By the universal property of the tight pullback $K_{f,g}$ it then follows that $v_{f,g}$ is tight.
\item 
From the first part we have $s_{f,g}\thing v_{f,g}=r_{f}$ and $t_{f,g}\thing v_{f,g}=r_{g}$ as in the two triangles above.  Now by ~\eqref{eq:adjR}  we have $q_{f}\thing r_{f}=f$ and $q_{g}\thing r_{g}=g$.  Therefore we can write $\alpha:(q_{f}\thing s_{f,g})\thing v_{f,g} \Rightarrow (q_{g}\thing t_{f,g})\thing v_{f,g}$.  Since $v_{f,g}:A \rightsquigarrow K_{f,g}$ is an equivalence in $\f A_{\lambda}$ the functor $\f A_{\lambda}(v_{f,g},A):\f A_{\lambda}(K_{f,g},A) \to \f A_{\lambda}(A,A)$ is an equivalence of categories -- using its fully faithfulness we obtain $\rho_{\alpha}:q_{f}\thing s_{f,g} \Rightarrow q_{g}\thing t_{f,g}$.
\qedhere
\end{enumerate}
\end{proof}

%%%%%%%%%%%%%%%%%%%%%%%%%%%%%%%%%%%%%%%%%%%%%%%%%%%%%%%%%%%%%%%%%%%%%%%%%%%%%%%%%%%%%%%%%%%%%%%%%%%%%%

\section{Orthogonality}
The following theorem is the crucial result of the paper.  The monadicity theorems of Section 6 follow easily from it.  We note that both this theorem and the corollary that follows it are independent of the formalism of 2-monads.

\begin{Theorem}\label{thm:orthogonality}
Let $w \in \{l,p,c\}$.  Consider an \f F-category $\g A$ with $\bar{w}$-limits of loose morphisms.  Then the inclusion of tight morphisms $j:\f A_{\tau} \to \g A$ is orthogonal to each $w$-doctrinal \f F-functor.
\end{Theorem}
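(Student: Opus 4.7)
The plan is to show that, given any commutative square of \f F-functors
$$
\xy
(0,0)*+{\f A_{\tau}}="a"; (22,0)*+{\g C}="b"; (0,-15)*+{\g A}="c"; (22,-15)*+{\g D}="d";
{\ar^{F} "a"; "b"};
{\ar_{j} "a"; "c"};
{\ar^{H} "b"; "d"};
{\ar_{G} "c"; "d"};
\endxy
$$
with $H$ a $w$-doctrinal \f F-functor, there exists a unique filler $K:\g A \to \g C$ with $Kj=F$ and $HK=G$. The construction hinges on the tight-span description of Proposition~\ref{prop:factor}: a loose morphism $f$ is recovered as $q_{f} \cdot r_{f}$, where $r_{f}$ is the right adjoint in a $w$-reflection with tight left adjoint $p_{f}$. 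We use $w$-Refl to lift such reflections from $\g D$ to $\g C$ and $w$-Morph to check that the pieces fit together.

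For uniqueness, $Kj=F$ determines $K$ on tight data. For loose $f:A \rightsquigarrow B$, applying $K$ to $(1,p_{f} \dashv r_{f}, \eta_{f})$ yields a $w$-reflection in $\g C$ with tight left adjoint $Fp_{f}$ whose image under $H$ is $(1,Gp_{f} \dashv Gr_{f}, G\eta_{f})$; by $w$-Refl this lift is unique, so $Kr_{f}$ and hence $Kf=Fq_{f} \cdot Kr_{f}$ is determined. Local faithfulness of $H$ then forces uniqueness on 2-cells. For existence, set $K=F$ on tight morphisms and on 2-cells between them. For loose $f$, let $r'_{f}$ be produced by $w$-Refl applied to $(1,Gp_{f} \dashv Gr_{f}, G\eta_{f})$ and set $Kf := Fq_{f} \cdot r'_{f}$. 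Property~\eqref{eq:rTight} says $r_{f}$ is tight whenever $f$ is, in which case $Fr_{f}$ already gives a lift; uniqueness of the lift then forces $Kf=Ff$, so $K$ preserves tightness and extends $F$.

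The substantive step is preservation of composition. For loose $f:A \rightsquigarrow B$, $g:B \rightsquigarrow C$, assemble the $w$-span $\bar{W}_{g}\bar{W}_{f}$: by Lemma~\ref{prop:pullbacks2} the pair $(q_{g,f},q_{f}) : p_{g,f} \to p_{g}$ is a morphism of $w$-reflections, a fact preserved by the \f F-functor $G$. So by $w$-Morph applied to the lifted reflections in $\g C$, the tight square $(Fq_{g,f},Fq_{f})$ is itself a morphism between the lifted $w$-reflections, yielding $Fq_{g,f} \cdot r'_{g,f} = r'_{g} \cdot Fq_{f}$. Similarly Lemma~\ref{prop:spanmap} together with $w$-Morph gives $Fk_{g,f} \cdot r'_{g,f} \cdot r'_{f} = r'_{gf}$. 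Combining these with $q_{g} \cdot q_{g,f}=q_{gf} \cdot k_{g,f}$ from~\eqref{eq:comp} yields $Kg \cdot Kf = K(gf)$ by a short calculation; identity preservation reduces to tightness of $1_{A}$.

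It remains to define $K$ on 2-cells $\alpha : f \Rightarrow g$ between loose morphisms, using the representations of Section 4.3. For $w=l$, the tight morphism $c_{\alpha}:C_{f} \to C_{g}$ of~\eqref{eq:l2} gives a tight commuting square $(Fc_{\alpha},1) : Fp_{f} \to Fp_{g}$; take its mate $m'_{\alpha}$ through the lifted $l$-reflections and set $K\alpha := Fq_{g} \cdot m'_{\alpha}$, whereupon $HK\alpha = G\alpha$ follows from Lemma~\ref{prop:l2} together with preservation of mates by 2-functors. For $w=p$, the key observation is that the 2-cell $\rho_{\alpha}$ of Lemma~\ref{prop:p2} sits between the \emph{tight} composites $q_{f} \cdot s_{f,g}$ and $q_{g} \cdot t_{f,g}$, hence corresponds to a tight 2-cell to which $F$ applies; lifting $u_{f,g} \dashv v_{f,g}$ to $Fu_{f,g} \dashv v'_{f,g}$ via $p$-Refl, we set $K\alpha := F\rho_{\alpha} \cdot v'_{f,g}$, giving $HK\alpha = G\rho_{\alpha} \cdot Gv_{f,g} = G\alpha$. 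The case $w=c$ is dual. Finally, 2-functoriality of $K$ is automatic: any required equation becomes true after applying the locally faithful $H$, since $G$ is a 2-functor. The main obstacle is the composition check for loose 1-cells, which requires marshalling Lemmas~\ref{prop:pullbacks2} and~\ref{prop:spanmap} alongside $w$-Refl and $w$-Morph so that all lifted reflections fit together coherently.
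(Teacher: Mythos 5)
Your proposal is correct and follows essentially the same route as the paper's proof: reduce to $w\in\{l,p\}$ by duality, force the filler on loose 1-cells via the factorisation $f=q_{f}\thing r_{f}$ and the unique lifts provided by $w$-Refl, handle composition with Lemmas~\ref{prop:pullbacks2} and~\ref{prop:spanmap} plus $w$-Morph (including the identification of composites of lifted reflections with lifts of composites, which you only gesture at but which is the paper's ``Part 1'' bookkeeping), define $K$ on 2-cells via the mate $m_{\alpha}$ when $w=l$ and via $\rho_{\alpha}\thing v_{f,g}$ when $w=p$, and obtain 2-functoriality and uniqueness from local faithfulness. The only detail worth spelling out further is that the lifted triangles (e.g.\ $Fs_{f,g}\thing v^{\prime}_{f,g}=r^{\prime}_{f}$) commute, so that your $K\alpha$ really has source $Kf$ and target $Kg$; this follows from $w$-Morph exactly as in the paper.
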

\begin{proof}
Consider a commuting square in \fcat
$$\xy
(0,0)*+{\f A_{\tau}}="a0"; (20,0)*+{\g A}="b0";(0,-10)*+{\g B}="c0";(20,-10)*+{\g C}="d0";
{\ar^{j} "a0"; "b0"}; 
{\ar_{R} "a0"; "c0"}; 
{\ar^{S} "b0"; "d0"}; 
{\ar_{H} "c0"; "d0"}; 
{\ar@{.>}|{K} "b0"; "c0"}; 
\endxy$$
in which $H$ is $w$-doctrinal.  We must show there exists a unique diagonal filler $K$.  We begin by noting that the cases $w=c$ and $w=l$ are dual since \g A satisfies the $c$-criteria of the theorem just when $\g A^{co}$ satisfies the $l$-criteria with, equally, $H$ $c$-doctrinal just when $H^{co}$ is $l$-doctrinal.  Therefore it will suffice to suppose $w \in \{l,p\}$.
\begin{enumerate}
\item
Before constructing the diagonal we fix some notation and make some observations about lifted adjunctions that will be repeatedly used in what follows.  Given a $w$-reflection $(1, f \dashv g,\eta) \in \g A$ we obtain a $w$-reflection $(1, Sf \dashv Sg,S\eta)$ in \g C with $Sf=Hf)$ since $f$ is tight.  As $H$ is $w$-doctrinal this lifts uniquely along $H$ to a $w$-reflection in \g B which we denote by $(1,Rf \dashv \overline{g},\overline{\eta})$.\newline{}
Next consider $w$-reflections $(1, f_{1} \dashv g_{1},\eta_{1})$ and $(1, f_{2} \dashv g_{2},\eta_{2})$ in \g A and a tight commuting square $(r,s):f_{1} \to f_{2}$ in \g A as left below
$$\hspace{0.2cm}
\xy
(0,0)*+{A}="a0"; (20,0)*+{C}="b0";(0,-10)*+{B}="c0";(20,-10)*+{D}="d0";
{\ar^{r} "a0"; "b0"}; 
{\ar_{s} "c0"; "d0"}; 
{\ar_{f_{1}} "a0"; "c0"};
{\ar^{f_{2}} "b0"; "d0"};  
\endxy
\hspace{1cm}
\xy
(0,0)*+{RA}="a0"; (20,0)*+{RC}="b0";(0,-10)*+{RB}="c0";(20,-10)*+{RD}="d0";
{\ar^{Rr} "a0"; "b0"}; 
{\ar_{Rs} "c0"; "d0"}; 
{\ar_{Rf_{1}} "a0"; "c0"};
{\ar^{Rf_{2}} "b0"; "d0"};  
\endxy
\hspace{1cm}
\xy
(0,0)*+{RB}="a0"; (20,0)*+{RD}="b0";(0,-10)*+{RA}="c0";(20,-10)*+{RC}="d0";
{\ar@{~>}_{\overline{g_{1}}} "a0"; "c0"}; 
{\ar@{~>}^{\overline{g_{2}}} "b0"; "d0"}; 
{\ar_{Rr} "c0"; "d0"};
{\ar^{Rs} "a0"; "b0"};
\endxy
$$
Since \f F-functors preserves morphisms of $w$-reflections the square $(Sr,Ss):Sf_{1} \to Sf_{2}$ is one in \g C.  Now the tight commutative square $(Rr,Rs):Rf_{1} \to Rf_{2}$ has image under $H$ the morphism of $w$-reflections $(Sr,Ss):Sf_{1} \to Sf_{2}$.  Because $H$ is $w$-doctrinal it follows that $(Rr,Rs):Rf_{1} \to Rf_{2}$ is a morphism between the lifted $w$-reflections in \g B.   In particular we obtain a commuting square of right adjoints $(Rs,Rr):\overline{g_{1}} \to \overline{g_{2}}$.\\
Finally consider a composable pair of tight left adjoints $f_{1}:A \to B$ and $f_{2}:B \to C$ with associated $w$-reflections $(1, f_{1} \dashv g_{1},\eta_{1})$ and $(1, f_{2} \dashv g_{2},\eta_{2})$ in \g A.  We can form the $w$-reflections $(1,Rf_{1} \dashv \overline{g_{1}},\overline{\eta_{1}})$ and $(1,Rf_{2} \dashv \overline{g_{2}},\overline{\eta_{2}})$ in \g B and compose these to obtain a further $w$-reflection $f_{2}\thing f_{1} \dashv \overline{g_{1}}\thing \overline{g_{2}}$ in \g B.  It is clear that this is a lifting of the $w$-reflection $S(f_{2}\thing f_{1}) \dashv S(g_{1}\thing g_{2})$ so that, by uniqueness of liftings, the $w$-reflections $Rf_{2}\thing Rf_{1} \dashv \overline{g_{1}}\thing \overline{g_{2}}$ and $R(f_{2}\thing f_{1}) \dashv \overline{g_{1}\thing g_{2}}$ coincide.
\item
Now to begin constructing $K$ observe that for the left triangle to commute we must define $KA=RA$ for each $A \in \g A$.
\item
Given $f:A \rightsquigarrow B \in \g A$ we recall from \eqref{eq:conR} its factorisation as $q_{f}\thing r_{f}:A \rightsquigarrow C_{f} \to B$ where $(1,p_{f} \dashv r_{f},\eta_{f})$.  Since $p_{f}$ is tight we have the lifted $w$-reflection $(1,Rp_{f} \dashv \overline{r_{f}},\overline{\eta}_{f})$ in \g B living over $(1,Sp_{f} \dashv Sr_{f},S\eta_{f})$.  We define $Kf:RA \rightsquigarrow RB$ as the composite $Rq_{f} \thing \overline{r}_{f}:RA \rightsquigarrow R\bar{W}_{f} \to RB$.  
\item Observe that $HKf=HRq_{f} \thing H\overline{r}_{f}= Sq_{f} \thing Sr_{f}=S(q_{f} \thing r_{f})=Sf$ as required.  To see that $K$ extends $R$ observe that if $f:A \to B$ is tight then, by \eqref{eq:rTight}, $r_{f}$ is tight too, so that we have a $w$-reflection $(1,Rp_{f} \dashv Rr_{f},R\eta_{f})$ living over $(1,HRp_{f} \dashv Sr_{f},\eta_{f})$; thus $(1,Rp_{f} \dashv Rr_{f},R\eta_{f})=(1,Rp_{f} \dashv \overline{r_{f}},\overline{\eta_{f}})$ so that $Kf=Rq_{f} \thing Rr_{f} = Rf$.  Thus $K$ coincides with $R$ on tight morphisms.
\item 
As $K$ agrees with $R$ on tight morphisms we already know that it preserves identity 1-cells.  To see that it preserves composition of 1-cells it will suffice to show that all of the regions of the diagram below commute.
$$
\xy
(15,30)*+{R\bar{W}_{gf}}="f0"; 
(-25,-15)*+{RA}="b0";(15,-15)*+{RB}="c0";
(0,-2.5)*+{R\bar{W}_{f}}="a0";
{\ar_{Rq_{f}} "a0"; "c0"}; 
(30,-2.5)*+{R\bar{W}_{g}}="d0";(55,-15)*+{RC}="e0";
{\ar_{Rq_{g}} "d0"; "e0"}; 
{\ar@{~>}_{\overline{r}_{g}} "c0"; "d0"}; 
{\ar^{Rq_{gf}} "f0"; "e0"}; 
(15,10)*+{R\bar{W}_{g,f}}="g0"; 
{\ar|{Rk_{g,f}} "g0"; "f0"}; 
{\ar@{~>}_{\overline{r}_{f}} "b0"; "a0"}; 
{\ar@{~>}^{\overline{r}_{gf}} "b0"; "f0"}; 
{\ar@{~>}^{\overline{r}_{g,f}} "a0"; "g0"}; 
{\ar^{Rq_{g,f}} "g0"; "d0"}; 
\endxy
$$
The rightmost quadrilateral certainly commutes as it is the image of a commutative diagram in $\f A_{\tau}$, from \eqref{eq:comp}, under $R$.  To see that the central square commutes recall from \eqref{eq:spancomp} the morphism of $w$-reflections $(q_{g,f},q_{f}):p_{g,f} \to p_{g}$ in \g A.  By Part 1 $(Rq_{g,f},Rq_{f}):Rp_{g,f} \to p_{g,f}$ is a morphism of $w$-reflections in \g B: now commutativity of the central square simply asserts commutativity with the right adjoints.  With regards the leftmost quadrilateral recall from Lemma~\ref{prop:spanmap} the morphism of $w$-reflections in \g A given by $(k_{g,f},1):p_{f}\thing p_{g,f} \to p_{gf}$.  By Part 1 we know that $(Rk_{g,f},1):R(p_{f}\thing  p_{g,f}) \to Rp_{gf}$ is a morphism of the lifted $w$-reflections so that, in particular, we have a commuting square of right adjoints $(1,Rk_{g,f}):\overline{r_{g,f}\thing r_{f}} \to \overline{r}_{gf}$ in \g B.  Again by Part 1 we have $\overline{r_{g,f}\thing r_{f}} = \overline{r}_{g,f} \overline{r}_{f}$ and therefore the desired commutativity.
\item We define $K$ differently on 2-cells depending upon whether $w=l$ or $w=p$.  Consider $\alpha:f \Rightarrow g$ and the case $w=l$.  The commuting square $(c_{\alpha},1):p_{f} \to p_{g}$ of Lemma~\ref{prop:l2} has image $(Rc_{\alpha},1):Rp_{f} \to Rp_{g}$.    We denote the mate of this square through the adjunctions $Rp_{f} \dashv \overline{r}_{f}$ and $Rp_{g} \dashv \overline{r}_{g}$ by $\overline{m}_{\alpha}:Rc_{\alpha}\thing \overline{r}_{f} \Rightarrow \overline{r}_{g}$.
$$\xy
(-10,0)*+{}="-10";
(0,0)*+{RA}="00";(0,12)*+{RA}="01";(20,12)*+{RC_{f}}="11";(40,0)*+{RB}="20";(20,0)*+{RC_{g}}="10";(40,12)*+{B}="21";
{\ar@{~>}^{\overline{r}_{f}} "01"; "11"};
{\ar@{~>}_{\overline{r}_{g}} "00"; "10"};
{\ar^{Rq_{f}} "11"; "21"}; 
{\ar_{Rq_{g}} "10"; "20"}; 
{\ar^{Rc_{\alpha}} "11"; "10"};
{\ar@{=>}^{\overline{m}_{\alpha}}(10,9)*+{};(10,4)*+{}};
{\ar_{1} "01"; "00"}; 
{\ar^{1} "21"; "20"};  
\endxy
$$
Now we set $K\alpha=Rq_{g}\thing \overline{m}_{\alpha}$ as depicted above.  Note that we have $\overline{m}_{\alpha}=\overline{\eta}_{g}\thing Rc_{\alpha}\thing \overline{r}_{f}$ since the counit of the $l$-reflection $Rp_{f} \dashv \overline{r}_{f}$ is an identity.\newline{}
Let us show that $HK\alpha=S\alpha$.  We have from Lemma~\ref{prop:l2} the decomposition in \g A of $\alpha$ as $q_{g}\thing m_{\alpha}= q_{g}\thing \eta_{g}\thing c_{\alpha}\thing r_{f}$ and also have $K\alpha=Rq_{g}\thing \overline{\eta}_{g}\thing Rc_{\alpha}\thing \overline{r}_{f}$ from above.  Thus $HK\alpha=Sq_{g}\thing S\eta_{g}\thing Sc_{\alpha}\thing Sr_{f}=S(q_{g}\thing \eta_{g}\thing c_{\alpha}\thing r_{f})=S\alpha$.\newline{}
%%%%%%%%%%%%%%%%%%%%%%%%%%%%%%%%%%%%%%%%%%%%%%%%%
To see that $K$ extends $R$ observe that if both $f$ and $g$ are tight then by Part 4 we have $(1,Rp_{f} \dashv \overline{r}_{f},\overline{\eta}_{f})=(1,Rp_{f} \dashv Rr_{f},R\eta_{f})$ and likewise for $g$.  Using the same decomposition of $\alpha$ we have $R\alpha=Rq_{g}\thing R\eta_{g}\thing Rc_{\alpha}\thing Rr_{f}=K\alpha$.
%%%%%%%%%%%%%%%%%%%%%%%%%%%%%%%%%%%%%%%%%%%%%%%%%
\newline{}
For $w=p$ consider the $p$-reflection $u_{f,g} \dashv v_{f,g}$ and the morphisms of $p$-reflections $(s_{f,g},1):u_{f,g} \to p_{f}$ and $(t_{f,g},1):u_{f,g} \to p_{g}$ of Lemma~\ref{prop:p2}.  By Part 1 their images $(Rs_{f,g},1):Ru_{f,g} \to Rp_{f}$ and $(Rt_{f,g},1):Ru_{f,g} \to Rp_{g}$ are morphisms of $p$-reflections in \g B so that the two triangles in the diagram below
$$\xy
(-5,0)*+{RA}="00"; (15,-15)*+{RK_{f,g}}="1-1";(35,-15)*+{RP_{f}}="2-1";(15,-30)*+{RP_{g}}="1-2";(35,-30)*+{A}="2-2";
{\ar@{~>}^{\overline{v}_{f,g}} "00"; "1-1"}; 
{\ar@/^1pc/@{~>}^{\overline{r}_{f}} "00"; "2-1"}; 
%{\ar@{~>}^{r_{f}} "00"; "2-1"}; 
{\ar@/_1pc/@{~>}_{\overline{r}_{g}} "00"; "1-2"}; 
{\ar^{Rs_{f,g}} "1-1"; "2-1"}; 
{\ar^{Rq_{f}} "2-1"; "2-2"};  
{\ar_<<<{Rt_{f,g}} "1-1"; "1-2"};
{\ar_{Rq_{g}} "1-2"; "2-2"}; 
{\ar@{=>}^{R\rho_{\alpha}}(27,-20)*+{};(22,-25)*+{}} 
\endxy$$ commute.  We set $K\alpha =R\rho_{\alpha}\thing \overline{v}_{f,g}$.\newline{}
By Lemma~\ref{prop:p2} we have that $\alpha=\rho_{\alpha}\thing v_{f,g}$.  Therefore $S\alpha=S\rho_{\alpha}\thing Sv_{f,g}=HR\rho_{\alpha} \thing H\overline{v}_{f,g}=HK\alpha$.   If now both $f$ and $g$ are tight then so is $v_{f,g}$, again by Lemma ~\ref{prop:p2}.    It follows that the $p$-reflections $Ru_{f,g} \dashv \overline{v}_{f,g}$ and $Ru_{f,g} \dashv Rv_{f,g}$ coincide, and therefore that $R\alpha=R\rho_{\alpha}\thing Rv_{f,g}=K\alpha$.
\item Treating the cases $w=l$ and $w=p$ together again observe that  because $H$ is locally faithful the functoriality of $K$ on 2-cells trivally follows from the functoriality of $HK=S$ on 2-cells.  Thus $K$ is an \f F-functor.
\item For uniqueness let us observe that the definition of the filler is forced upon us by the constraints.  Consider any diagonal filler $L$.  We must have $Lf=L(q_{f} \thing r_{f})=Lq_{f} \thing Lr_{f}= Rq_{f} \thing Lr_{f}$.  Since $HL=S$ we would need that the image of the adjunction $(1,p_{f} \dashv r_{f},\eta_{f})$ under $L$ were a lifting of its image $(1,Sp_{f} \dashv Sr_{f},S\eta_{f})$ under $HL$.  But by uniqueness of liftings we would then have $(1,Lp_{f} \dashv Lr_{f},L\eta_{f})=(1,Rp_{f} \dashv \overline{r_{f}},\overline{\eta_{f}})$ so that $Lf=Rq_{f}\thing \overline{r}_{f}$.  Now for $w=l$ we have, by Lemma~\ref{prop:l2}, the decomposition of $\alpha:f \Rightarrow g$ as $q_{g}\thing m_{\alpha}=q_{g}\thing \eta_{g} \thing c_{\alpha} \thing r_{f}$ and so must have $L\alpha=Rq_{g}\thing \overline{\eta}_{g}\thing Rc_{\alpha}\thing \overline{r}_{f}$.  The argument when $w=p$ is similar but uses the decomposition $\alpha=\rho_{\alpha}\thing v_{f,g}$ of Lemma~\ref{prop:p2} instead.
\qedhere
\end{enumerate}
\end{proof}
Let us note that Theorem~\ref{thm:orthogonality} remains true even if we remove the assumption that $w$-doctrinal \f F-functors are locally faithful.  The only place that we used this assumption was in establishing the functoriality of the diagonal $K$ on 2-cells.  This can alternatively be established by carefully analysing the functoriality of the assignment in Section 4.3 which represents a 2-cell by a span transformation.  However the proof becomes significantly longer and more technical, very significantly when $w=p$.  Since local faithfulness is true of those \f F-functors $U:\fTAlg_{w} \to \f C$ that we seek to characterise (see Theorem~\ref{thm:monadicity}) and because the property is easily verified in practice, we have chosen to include it in our definition of $w$-doctrinal \f F-functor.

As an immediate consequence of Theorem~\ref{thm:orthogonality} we have:
\begin{Corollary}\label{thm:decomposition}
Let $w \in \{l,p,c\}$.  Consider an \f F-functor $H:\g A \to \f B$ to a 2-category and suppose that \g A has $\bar{w}$-limits of loose morphisms and that $H$ is $w$-doctrinal.  Then the decomposition in \fcat
$$\xy
(0,0)*+{\f A_{\tau}}="00";(15,0)*+{\g A}="10";(30,0)*+{\f B}="20";
{\ar^{j} "00"; "10"}; 
{\ar^<<<<<{H} "10"; "20"}; 
\endxy$$
of the 2-functor $H_{\tau}:\f A_{\tau} \to \f B$ is an orthogonal $(^{\bot}\wdoct,\wdoct)$-decomposition.
\end{Corollary}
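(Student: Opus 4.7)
The plan is to verify the three conditions defining a $(^{\bot}\wdoct,\wdoct)$-decomposition of $H_{\tau}$: firstly that the composite \f F-functor $H\circ j$ coincides with $H_{\tau}$; secondly that $j$ lies in $^{\bot}\wdoct$; and thirdly that $H$ lies in \wdoct.

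The first condition is a matter of unpacking conventions. Since \f B is viewed as an \f F-category in which every morphism is tight, the \f F-functor $H:\g A\to\f B$ is determined by a single 2-functor whose restriction to tight morphisms is precisely $H_{\tau}$. The \f F-functor $j:\f A_{\tau}\to\g A$ is by construction the identity on the 2-category of tight morphisms. Composing gives $H\circ j=H_{\tau}:\f A_{\tau}\to\f B$ as required. The third condition is nothing more than the standing assumption that $H$ is $w$-doctrinal.

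The substantive ingredient is the second condition, and this is where the work has already been done: Theorem~\ref{thm:orthogonality} asserts precisely that when \g A admits $\bar{w}$-limits of loose morphisms, the inclusion $j:\f A_{\tau}\to\g A$ is orthogonal to every $w$-doctrinal \f F-functor, which by definition means $j\in{}^{\bot}\wdoct$. I anticipate no genuine obstacle, since the corollary is merely a repackaging of Theorem~\ref{thm:orthogonality} together with the hypothesis on $H$ into the language of orthogonal factorizations in \fcat.
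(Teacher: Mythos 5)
Your proposal is correct and matches the paper's treatment: the paper gives this corollary as an immediate consequence of Theorem~\ref{thm:orthogonality}, exactly as you argue, with $j\in{}^{\bot}\wdoct$ supplied by that theorem, $H\in\wdoct$ by hypothesis, and $H\circ j=H_{\tau}$ by the conventions on viewing a 2-category as an \f F-category.
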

Since orthogonal decompositions are essentially unique, this asserts that an \f F-category satisfying some completeness properties and sitting over a base 2-category in a certain way -- such as $\fMonCat_{w}$ or $\fTAlg_{w}$ over $\Cat$ -- is \emph{uniquely determined} by how its tight part sits over the base.  This is the core idea behind our monadicity results of Section 6.
\subsection{A note on alternative hypotheses}
There are other hypotheses upon \g A which ensure that $j:\f A_{\tau} \to \g A$ belongs to $^{\bot}\wdoct$.  Let us focus on the lax case.  Say that \g A admits \emph{loose morphism classifiers} if the inclusion $j:\f A_{\tau} \to \f A_{\lambda}$ has a left 2-adjoint $Q$ and write $p_{A}:A \rightsquigarrow QA$ and $q_{A}:QA \to A$ for the unit and counit.  Each loose $f:A \rightsquigarrow B$ corresponds to a tight morphism $f^{\prime}:QA \to B$ such that $f^{\prime}\thing p_{A}=f$.  One triangle equation gives $q_{A}\thing p_{A}=1$; if for each $A$ this happens to be the counit of an $l$-reflection $q_{A} \dashv p_{A}$ then each $f$ is represented by an $l$-span
$$
\xy
(0,0)*+{A}="0";(20,0)*+{QA}="1";(40,0)*+{B}="2";
{\ar_{q_{A} \dashv p_{A}} "1"; "0"}; {\ar^{f^{\prime}} "1"; "2"}; 
\endxy
$$
and it can be easily shown that $j:\f A_{\tau} \to \g A \in ^{\bot}\ldoct$.  Of course these hypotheses are strong: it is not easy to check whether a given \f F-category admits loose morphism classifiers.
\section{Monadicity}
In this section we give our monadicity theorems.  We begin by extending Eilenberg-Moore comparison 2-functors to \f F-functors.  In Theorem~\ref{thm:naturality} we show these comparisons to be natural in $w$, in the sense of Diagram 2 of the Introduction.  Our main result on monadicity is Theorem~\ref{thm:monadicity}.
\subsection{Extending the Eilenberg-Moore comparison}
 \begin{Theorem}\label{thm:EMExtension}
 Let $H:\g A \to \f B$ be an \f F-functor to a 2-category whose tight part $H_{\tau}:\f A_{\tau} \to \f B$ has a left adjoint and consider the induced Eilenberg-Moore comparison 2-functor $E$ left below.
$$
\xy
(0,0)*+{\f A_{\tau}}="a0"; (15,-20)*+{\f B}="b0";(30,0)*+{\TAlgs}="c0";
{\ar_{H_{\tau}} "a0"; "b0"}; 
{\ar^{E} "a0"; "c0"}; 
{\ar^{U_{s}} "c0"; "b0"}; 
\endxy
\hspace{2cm}
\xy
(0,0)*+{\f A_{\tau}}="00";
(25,0)*+{\g A}="10";
(0,-20)*+{\TAlg_{s}}="0-2";
(25,-20)*+{\fTAlg_{w}}="1-2";
(40,-10)*+{\f B}="2-1";
{\ar^{j} "00"; "10"}; 
{\ar^{j_{w}} "0-2"; "1-2"}; 
{\ar^{H} "10"; "2-1"}; 
{\ar_{E} "00"; "0-2"}; 
{\ar@{.>}_{E_{w}} "10"; "1-2"}; 
{\ar_{U} "1-2"; "2-1"}; 
\endxy
$$
Let $w \in \{l,p,c\}$ and suppose that \g A admits $\overline{w}$-limits of loose morphisms.  Then $E:\f A_{\tau} \to \TAlgs$ admits a unique extension to an \f F-functor $E_{w}:\g A \to \fTAlg_{w}$ over \f B, as depicted on the right above.
 \end{Theorem}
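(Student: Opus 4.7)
My plan is to realise $E_{w}$ as the unique diagonal filler of a commutative square in \fcat, directly applying the orthogonality result of Theorem~\ref{thm:orthogonality}.

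First I would assemble the square
$$\xy
(0,0)*+{\f A_{\tau}}="00"; (35,0)*+{\fTAlg_{w}}="10";
(0,-15)*+{\g A}="01"; (35,-15)*+{\f B}="11";
{\ar^{j_{w} \circ E} "00"; "10"};
{\ar_{j} "00"; "01"};
{\ar^{U} "10"; "11"};
{\ar_{H} "01"; "11"};
\endxy$$
and check that it commutes in \fcat. The identity $U \circ j_{w} = U_{s}$ is built into the definition of the forgetful \f F-functor $U:\fTAlg_{w} \to \f B$, while $U_{s} \circ E = H_{\tau}$ is the defining property of the Eilenberg--Moore comparison $E$. Combined with $H \circ j = H_{\tau}$, the two composites agree.

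Next I would identify the lifting classes to which the vertical \f F-functors belong. Corollary~\ref{thm:algdoctrinal} shows that $U:\fTAlg_{w} \to \f B$ is $w$-doctrinal, and the hypothesis that $\g A$ admits $\overline{w}$-limits of loose morphisms lets me invoke Theorem~\ref{thm:orthogonality} to conclude that $j:\f A_{\tau} \to \g A$ is orthogonal to every $w$-doctrinal \f F-functor, in particular to $U$. Orthogonality then supplies a unique diagonal filler $E_{w}:\g A \to \fTAlg_{w}$. The commutativity of the lower triangle, $U \circ E_{w} = H$, records that $E_{w}$ lives over $\f B$; the commutativity of the upper triangle, $E_{w} \circ j = j_{w} \circ E$, records that $E_{w}$ extends $E$; and the uniqueness of the diagonal gives the uniqueness of the extension.

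I expect no genuine obstacle: the substantive content has already been packed into Theorem~\ref{thm:orthogonality} and Corollary~\ref{thm:algdoctrinal}, so this proof is a bookkeeping step. The one thing worth double-checking is that we are entitled to speak of the Eilenberg--Moore comparison in the first place, but that is guaranteed by the standing assumption that $H_{\tau}$ has a left adjoint, which produces both the 2-monad on $\f B$ and the 2-functor $E:\f A_{\tau} \to \TAlgs$ over $\f B$.
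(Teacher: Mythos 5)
Your proposal is correct and follows exactly the paper's own argument: commutativity of the outer square reduces to $U_{s}E = H_{\tau}$, and the unique filler $E_{w}$ is produced by combining Corollary~\ref{thm:algdoctrinal} (so $U$ is $w$-doctrinal) with Theorem~\ref{thm:orthogonality} (so $j$ is orthogonal to such \f F-functors). Nothing is missing.
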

 \begin{proof}
 The commutativity of the outside of the right diagram just says that $U_{s} E = H_{\tau}$.  By Corollary~\ref{thm:algdoctrinal} and Theorem~\ref{thm:orthogonality} we know that $U:\fTAlg_{w} \to \f B$ is $w$-doctrinal and that $j:\f A_{\tau} \to \g A$ is orthogonal to each $w$-doctrinal \f F-functor, in particular $U$.  Therefore there exists a unique \f F-functor $E_{w}:\g A \to \fTAlg_{w}$ satisfying the depicted equations $E_{w}j=j_{w}E$ and $UE_{w}=H$.  These respectively assert that $E_{w}$ extends $E$ and lives over the base $\f B$.
 \end{proof}
 
In order to understand the naturality in $w$ of the above Eilenberg-Moore extensions $E_{w}:\g A \to \fTAlg_{w}$ it will be convenient to briefly consider $\f F_{2}$-categories.  An $\f F_{2}$-category consists of a 2-category equipped with three kinds of morphism: tight, loose and very loose, all satisfying the expected axioms.  For instance we have the $\f F_{2}$-category of monoidal categories, strict, strong and lax monoidal functors; likewise of algebras together with strict, pseudo and lax morphisms for a 2-monad.  One presentation of an $\f F_{2}$-category is as a triple on the left below
$$
\xy
(0,0)*+{\f A_{\tau}}="00";
(20,0)*+{\f A_{\lambda}}="10";
(40,0)*+{\f A_{\phi}}="20";
{\ar^{j} "00"; "10"}; 
{\ar^{j} "10"; "20"}; 
\endxy
\hspace{2cm}
\xy
(0,0)*+{\g A_{\tau,\lambda}}="00";
(20,0)*+{\g A_{\tau,\phi}}="10";
{\ar^{j} "00"; "10"}; 
\endxy
$$
in which each inclusion is the identity on objects, faithful and locally fully faithful.  Thus an $\f F_{2}$-category has three associated $\f F$-categories $\g A_{\tau,\lambda}$, $\g A_{\lambda,\phi}$ and $\g A_{\tau, \phi}$, and is determined by the first and third of these together with the inclusion \f F-functor, right above, which views tight and loose morphisms as tight and very loose respectively. \\
We commonly encounter $\f F_{2}$-categories sitting over a base 2-category as on the  left below.  See how monoidal categories, strict, strong and lax monoidal functors sit over \Cat for instance.
$$
\xy
(0,0)*+{\f A_{\tau}}="00";
(20,0)*+{\f A_{\lambda}}="10";
(40,0)*+{\f A_{\phi}}="20";
(20,-20)*+{\f B}="1-2";
{\ar^{j} "00"; "10"}; 
{\ar^{j} "10"; "20"}; 
{\ar_{H_{\tau}} "00"; "1-2"}; 
{\ar|{H_{\lambda}} "10"; "1-2"}; 
{\ar^{H_{\phi}} "20"; "1-2"}; 
\endxy
\hspace{2cm}
\xy
(0,0)*+{\g A_{\tau,\lambda}}="00";
(25,0)*+{\g A_{\tau,\phi}}="20";
(12.5,-20)*+{\f B}="1-2";
{\ar^{j} "00"; "20"}; 
{\ar_{H_{\tau,\lambda}} "00"; "1-2"}; 
{\ar^{H_{\tau,\phi}} "20"; "1-2"}; 
\endxy$$
To give such a diagram is equally to give a commutative triangle in \fcat as on the right above -- here the \f F-functors $H_{\tau,\lambda}$ and $H_{\tau,\phi}$ agree as $H_{\tau}$ on tight parts, and have loose parts $H_{\lambda}$ and $H_{\phi}$ respectively.  
\begin{Theorem}\label{thm:naturality}
Let $w \in \{l,c\}$.  Consider an $\f F_{2}$-category over a 2-category as below
$$\xy
(0,0)*+{\g A_{\tau,\lambda}}="00";
(25,0)*+{\g A_{\tau,\phi}}="20";
(12.5,-20)*+{\f B}="1-2";
{\ar^{j} "00"; "20"}; 
{\ar_{H_{\tau,\lambda}} "00"; "1-2"}; 
{\ar^{H_{\tau,\phi}} "20"; "1-2"}; 
\endxy$$
Suppose that $H_{\tau}$ admits a left adjoint and that $\g A_{\tau,\lambda}$ and $\g A_{\tau,\phi}$ satisfy the $(p/w)$ variants of the completeness criteria of Theorem~\ref{thm:EMExtension} so that the Eilenberg-Moore comparison 2-functor $E:\f A_{\tau} \to \TAlgs$ extends uniquely to \f F-functors $E_{p}:\g A_{\tau,\lambda} \to \fTAlg_{p}$ and $E_{w}:\g A_{\tau,\phi} \to \fTAlg_{w}$ over the base \f B.
\\
When all of this holds the square
$$\xy
(0,0)*+{\g A_{\tau,\lambda}}="00"; (20,0)*+{\g A_{\tau,\phi}}="10"; 
(0,-15)*+{\fTAlg_{p}}="01"; (20,-15)*+{\fTAlg_{w}}="11";
{\ar^{j} "00";"10"};
{\ar ^{E_{w}}"10";"11"};
{\ar _{E_{p}}"00";"01"};
{\ar _{j} "01";"11"};
\endxy$$
commutes.
\end{Theorem}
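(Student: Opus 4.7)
The plan is to realise both $j\thing E_{p}$ and $E_{w}\thing j$ as diagonal fillers of a single orthogonal lifting problem in \fcat and conclude equality by uniqueness. The problem I have in mind is the outer commutative square
$$\xy
(0,0)*+{\f A_{\tau}}="00"; (35,0)*+{\fTAlg_{w}}="10";
(0,-15)*+{\g A_{\tau,\lambda}}="01"; (35,-15)*+{\f B}="11";
{\ar^{j_{w}\thing E} "00";"10"};
{\ar _{j}"00";"01"};
{\ar ^{U}"10";"11"};
{\ar _{H_{\tau,\lambda}} "01";"11"};
\endxy$$
where the left vertical is the tight inclusion and commutativity holds because both composites agree with $H_{\tau}$ on the tight part.

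To invoke Theorem~\ref{thm:orthogonality} I need the right-hand \f F-functor $U$ to be doctrinal relative to the completeness hypothesis on the lower-left. The $(p)$-hypothesis supplies $\g A_{\tau,\lambda}$ with $\bar{p}=p$-limits of loose morphisms, so I would need $U:\fTAlg_{w} \to \f B$ to be $p$-doctrinal. For $w \in \{l,c\}$ this is precisely the second clause of Corollary~\ref{thm:algdoctrinal}, so Theorem~\ref{thm:orthogonality} delivers orthogonality of $j:\f A_{\tau} \to \g A_{\tau,\lambda}$ against $U$ and hence a unique diagonal filler.

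The remaining verification is bookkeeping. For the lower triangle: $U\thing j\thing E_{p}=U_{p}\thing E_{p}=H_{\tau,\lambda}$, and $U\thing E_{w}\thing j=H_{\tau,\phi}\thing j=H_{\tau,\lambda}$ by the defining triangle of the $\f F_{2}$-category. For the upper triangle: precomposing $j\thing E_{p}$ with the tight inclusion gives $j\thing j_{p}\thing E=j_{w}\thing E$, using that $E_{p}$ extends $j_{p}\thing E$ and that the two algebra-side $j$'s compose to $j_{w}$; and precomposing $E_{w}\thing j$ with the tight inclusion gives $E_{w}$ along the total tight inclusion $\f A_{\tau} \to \g A_{\tau,\phi}$, which by construction of $E_{w}$ equals $j_{w}\thing E$. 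Uniqueness then forces $j\thing E_{p}=E_{w}\thing j$.

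The main subtlety is recognising which doctrinal property of $U:\fTAlg_{w} \to \f B$ to invoke. Matching the $w$-doctrinality of $U$ against the tight inclusion into $\g A_{\tau,\phi}$ only recovers the existence and uniqueness of $E_{w}$ itself, and would require $\bar{w}$-completeness of the source rather than the $p$-completeness assumed of $\g A_{\tau,\lambda}$. The correct pairing is the $p$-doctrinality of $U:\fTAlg_{w} \to \f B$ granted by the second clause of Corollary~\ref{thm:algdoctrinal}, matched against the $p$-complete source $\g A_{\tau,\lambda}$. Once that pairing is spotted the result is immediate.
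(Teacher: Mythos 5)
Your proposal is correct and is essentially the paper's own argument: the paper likewise exhibits $j\circ E_{p}$ and $E_{w}\circ j$ as diagonal fillers of the lifting problem of $j:\f A_{\tau}\to\g A_{\tau,\lambda}$ against $U_{w}:\fTAlg_{w}\to\f B$, using that $U_{w}$ is $p$-doctrinal (second clause of Corollary~\ref{thm:algdoctrinal}) and that the $p$-completeness of $\g A_{\tau,\lambda}$ makes $j$ orthogonal to $p$-doctrinal \f F-functors via Theorem~\ref{thm:orthogonality}, so uniqueness of the filler forces the square to commute.
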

\begin{proof}
Consider the diagram left below
$$\xy
(-20,0)*+{\f A_{\tau}}="-10";
(0,0)*+{\g A_{\tau,\lambda}}="00"; (20,0)*+{\g A_{\tau,\phi}}="10"; 
(0,-15)*+{\fTAlg_{p}}="01"; (20,-15)*+{\fTAlg_{w}}="11";(40,-15)*+{\f B}="21";
{\ar^{j} "-10";"00"};
{\ar^{j} "00";"10"};
{\ar ^{E_{w}}"10";"11"};
{\ar _{E_{p}}"00";"01"};
{\ar _{j} "01";"11"};
{\ar _{U_{w}} "11";"21"};
\endxy
\hspace{2cm}
\xy
(0,0)*+{\f A_{\tau}}="00"; (20,0)*+{\g A_{\tau,\lambda}}="10"; 
(0,-15)*+{\fTAlg_{w}}="01"; (20,-15)*+{\f B}="11";
{\ar^{j} "00";"10"};
{\ar ^{H_{\tau,\lambda}}"10";"11"};
{\ar _{jE}"00";"01"};
%{\ar@<3ex>^{p} "c0"; "b0"}; 
{\ar@{.>}_{}"10";"01"};
{\ar _{U_{w}} "01";"11"};
\endxy$$
Now $E_{p}$ and $E_{w}$ agree as $E:\f A_{\tau} \to \TAlgs$ on tight morphisms.  Consequently both paths of the square coincide upon precomposition with $j:\f A_{\tau} \to \g A_{\tau,\lambda}$ as the composite $jE:\f A_{\tau} \to \TAlgs \to \fTAlg_{w}$.  Because both Eilenberg-Moore extensions $E_{p}$ and $E_{w}$ lie over the base \f B we find that postcomposing both paths of the square with $U_{w}$ yields the common composite $H_{\tau,\lambda}:\g A_{\tau,\lambda} \to \f B$.  Therefore both paths of the square are diagonal fillers for the square on the right above. Now by Corollary~\ref{thm:algdoctrinal} we know that $U_{w}:\fTAlg_{w} \to \f B$ is $p$-doctrinal.  But by Theorem~\ref{thm:orthogonality} $j:\f A_{\tau} \to \g A_{\tau,\lambda}$ is orthogonal to each $p$-doctrinal \f F-functor so that both paths coincide as the unique filler.
\end{proof}

 \subsection{2-categorical monadicity}
 We now turn to monadicity.  Let us say that a 2-functor with a left adjoint is \emph{monadic} if the induced Eilenberg-Moore comparison is a 2-equivalence and \emph{strictly monadic} if the comparison is an isomorphism.
 \begin{Theorem}\label{thm:monadicity}
 Let $H:\g A \to \f B$ be an \f F-functor to a 2-category \f B.  Let $w \in \{l,p,c\}$ and suppose that
 \begin{enumerate}
 \item $H_{\tau}:\f A_{\tau} \to \f B$ is monadic.
 \item \g A admits $\overline{w}$-limits of loose morphisms.
 \item \f B admits $\overline{w}$-limits of arrows.
 \item $H$ is $w$-doctrinal (it suffices that $H$ satisfies $w$-doctrinal adjunction, is locally faithful and reflects identity 2-cells).
 \end{enumerate}
 Then the 2-equivalence $E:\f A_{\tau} \to \TAlgs$ extends uniquely to an equivalence of \f F-categories $E_{w}:\g A \to \fTAlg_{w}$ over \f B.  Moreover if $H_{\tau}$ is strictly monadic then $E_{w}:\g A \to \fTAlg_{w}$ is an isomorphism of \f F-categories.
 \end{Theorem}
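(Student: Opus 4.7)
The plan is to construct $E_w$ via Theorem~\ref{thm:EMExtension}, and then to deduce that it is an equivalence by exhibiting $\g A$ and $\fTAlg_w$ as the middle objects of two $({}^{\bot}\wdoct, \wdoct)$-decompositions of (essentially) the same 2-functor to $\f B$, invoking the essential uniqueness of orthogonal decompositions mentioned after Corollary~\ref{thm:decomposition}.

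Hypothesis~1 gives the Eilenberg--Moore comparison $E:\f A_\tau \to \TAlgs$ as a 2-equivalence, and as a 2-isomorphism when $H_\tau$ is strictly monadic. Together with hypothesis~2, Theorem~\ref{thm:EMExtension} then delivers the unique \f F-functor $E_w:\g A \to \fTAlg_w$ over $\f B$ extending $E$, with $E_w j = j_w E$ and $U E_w = H$. (The bracketed alternative in hypothesis~4 genuinely suffices for $H$ to be $w$-doctrinal, by Lemma~\ref{thm:doctrinalLemma}.2.)

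Next, I would establish the two $({}^{\bot}\wdoct, \wdoct)$-decompositions. On the one hand, hypothesis~4 gives $H \in \wdoct$, while hypothesis~2 together with Theorem~\ref{thm:orthogonality} gives $j \in {}^{\bot}\wdoct$, so by Corollary~\ref{thm:decomposition} the composite $H_\tau = H \circ j$ is orthogonally factored through $\g A$. On the other hand, hypothesis~3 combined with Proposition~\ref{prop:limits} --- which says $U:\fTAlg_w \to \f B$ creates $\bar w$-limits of loose morphisms --- endows $\fTAlg_w$ with such limits; Corollary~\ref{thm:algdoctrinal} gives $U \in \wdoct$; and then Theorem~\ref{thm:orthogonality} applied to $\fTAlg_w$ yields $j_w \in {}^{\bot}\wdoct$. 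So Corollary~\ref{thm:decomposition} orthogonally factors $U_s = U \circ j_w$ through $\fTAlg_w$.

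To conclude, I would invoke essential uniqueness of such decompositions. In the strictly monadic case $E$ is a 2-isomorphism, so both decompositions cover the same 2-functor $H_\tau$ (after identifying $\TAlgs$ with $\f A_\tau$ via $E$); standard uniqueness up to unique isomorphism of orthogonal factorisations yields an isomorphism $\g A \cong \fTAlg_w$ over $\f B$, which by Theorem~\ref{thm:EMExtension}'s uniqueness clause must coincide with $E_w$. In the general monadic case, I would pick a quasi-inverse $E^{-1}$ of $E$ and construct $F_w:\fTAlg_w \to \g A$ using the orthogonality $j_w \perp H$ (from $j_w \in {}^{\bot}\wdoct$ and hypothesis~4) applied to the composite $j E^{-1}:\TAlgs \to \g A$; the 2-natural isomorphisms $E E^{-1} \cong 1$ and $E^{-1} E \cong 1$ then induce, via the uniqueness clauses of $j \perp U$ and $j_w \perp H$, isomorphisms $E_w F_w \cong 1_{\fTAlg_w}$ and $F_w E_w \cong 1_{\g A}$. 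The principal obstacle is the mild non-strictness in this last step: the square used to define $F_w$ commutes only up to isomorphism, since $H \circ j \circ E^{-1} = H_\tau E^{-1}$ is only isomorphic to $U_s$, so the strict 1-categorical orthogonality of Corollary~\ref{thm:decomposition} must be replaced either by a 2-categorical refinement of orthogonality in \fcat\ or by a careful ad-hoc handling of these isomorphisms --- this is the only delicate point beyond assembling the results already cited.
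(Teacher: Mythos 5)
Your construction of $E_{w}$, your identification of the two $(^{\bot}\wdoct,\wdoct)$-decompositions, your use of hypothesis 3 via Proposition~\ref{prop:limits}, and your treatment of the strictly monadic case all match the paper's argument and are fine (as is the remark that the parenthetical form of hypothesis 4 suffices, by Lemma~\ref{thm:doctrinalLemma}). The genuine gap is exactly the point you defer at the end: in the merely monadic case the square you propose to fill in order to build $F_{w}$ does not commute, since $Hj E^{-1}=H_{\tau}E^{-1}$ is only isomorphic to $U_{s}$, and no quasi-inverse $E^{-1}$ can in general be chosen strictly over $\f B$ (essential surjectivity only supplies isomorphisms $EA\cong (B,b)$, whose images under $U_{s}$ are non-identity isomorphisms in $\f B$). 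All the orthogonality available in the paper -- Theorem~\ref{thm:orthogonality} and Corollary~\ref{thm:decomposition} -- is strict 1-categorical orthogonality against strictly commuting squares, and no ``pseudo'' refinement is established there; proving one would be a substantial new result, not a routine patch. So as it stands the general case is not proved: the one idea that makes it work is missing.

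The paper closes this gap without any pseudo-orthogonality, by changing the decomposition rather than the notion of orthogonality. One factors the composite $j_{w}E:\f A_{\tau}\to\TAlgs\to\TAlg_{w}$ as identity-on-objects followed by 2-fully faithful through a 2-category $\overline{\f A}_{\lambda}$; the first factor $\overline{j}:\f A_{\tau}\to\overline{\f A}_{\lambda}$ is faithful and locally fully faithful (because $j_{w}E$ is and the second factor $K$ is 2-fully faithful), so it is the inclusion of an \f F-category $\overline{\g A}$ with tight part literally $\f A_{\tau}$, and $L=(E,K):\overline{\g A}\to\fTAlg_{w}$ is an equivalence of \f F-categories. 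Now $UL$ is $w$-doctrinal (equivalences are, by Proposition~\ref{prop:equivalences}, and $\wdoct$ is closed under composition), $\overline{\g A}$ inherits the $\bar{w}$-limits from $\fTAlg_{w}$, so $\overline{j}\in{}^{\bot}\wdoct$ by Theorem~\ref{thm:orthogonality}; hence $(j,H)$ and $(\overline{j},UL)$ are two strict orthogonal $(^{\bot}\wdoct,\wdoct)$-decompositions of the \emph{same} 2-functor $H_{\tau}$, and the comparison $\overline{E}:\g A\to\overline{\g A}$ (obtained from orthogonality of $j$ against the \f F-fully faithful $L$, with $L\overline{E}=E_{w}$) is an isomorphism. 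Then $E_{w}=L\overline{E}$ is an equivalence by 2-out-of-3. If you want to keep your quasi-inverse strategy instead, you would have to supply and prove the bicategorical orthogonality statement you allude to, which is considerably more work than the replacement trick above.
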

\begin{proof}
As in Theorem~\ref{thm:EMExtension} we have our extension $E_{w}:\g A \to \fTAlg_{w}$, unique in filling the square
$$
\xy
(0,10)*+{\f A_{\tau}}="00"; (30,10)*+{\g A}="10";(0,-20)*+{\fTAlg_{w}}="02";(30,-20)*+{\f B}="11";(0,-5)*+{\TAlgs}="01";
{\ar_{E} "00"; "01"}; 
{\ar_{j_{w}} "01"; "02"}; 
{\ar_{U} "02"; "11"}; 
{\ar^{j} "00"; "10"}; 
{\ar^{H} "10"; "11"}; 
{\ar@{.>}^{E_{w}} "10"; "02"}; 
\endxy$$
Recall that this filler exists because $U$ is $w$-doctrinal and $j$ orthogonal to such \f F-functors.
\\
We begin by proving the theorem in the case of strict monadicity and deduce the general case from that -- so suppose that $E:\f A_{\tau} \to \TAlgs$ is an isomorphism of 2-categories.   By Proposition~\ref{prop:limits} $U:\fTAlg_{w} \to \f B$ creates $\overline{w}$-limits of loose morphisms so that $\fTAlg_{w}$ admits them.  Now Theorem ~\ref{thm:orthogonality} implies that the inclusion $j_{w}:\TAlgs \to \fTAlg_{w}$ is orthogonal to each $w$-doctrinal \f F-functor; since $E:\f A_{\tau} \to \TAlgs$ is an isomorphism $j_{w}E:\f A \to \fTAlg_{w}$ is also orthogonal to $w$-doctrinal \f F-functors.  Now $H$ is $w$-doctrinal by assumption so that the two outer paths of the square are orthogonal decompositions of a common \f F-functor.  Therefore the unique filler $E_{w}:\g A \to \TAlg_{w}$ is an isomorphism.
\\
Suppose now that $E:\f A_{\tau} \to \TAlgs$ is only an equivalence of 2-categories.  The problem now is that $E$ will no longer be orthogonal to $w$-doctrinal \f F-functors.  We will rectify this by factoring the composite 2-functor $j_{w} E:\f A_{\tau} \to \TAlgs \to \TAlg_{w}$ as the identity on objects followed by 2-fully faithful through a 2-category $\f {\overline{A}}_{\lambda}$ as in the commutative square below.
$$\xy
(0,10)*+{\f A_{\tau}}="a0"; (30,10)*+{\f {\overline{A}}_{\lambda}}="b0";(0,-10)*+{\TAlgs}="c0";(30,-10)*+{\TAlg_{w}}="d0";
{\ar^{\overline{j}} "a0"; "b0"}; 
{\ar_{E} "a0"; "c0"}; 
{\ar^{K} "b0"; "d0"}; 
{\ar^{j_{w}} "c0"; "d0"}; 
\endxy$$
Then $K$ is 2-fully faithful by construction but also essentially surjective on objects since each of $E$, $\overline{j}$ and $j_{w}$ are; as such $K$ is a 2-equivalence.  Moreover the composite $K \overline{j} = j_{w} E$ is both faithful and locally fully faithful since both $j_{w}$ and $E$ are, whilst $K$ is 2-fully faithful.  Therefore $\overline{j}$ is faithful and locally fully faithful too and since it is identity on objects by construction it is the inclusion of an \f F-category $\overline{\g A}:\f A_{\tau} \to \overline {\f A}_{\lambda}$.  It now follows that the above commutative square, whose vertical legs are 2-equivalences, exhibits $L=(E,K):\overline{\g A} \to \fTAlg_{w}$ as an equivalence of \f F-categories. \\
Consider the diagram defining the extension $E_{w}$ again, now drawn on the left below
$$\xy
(0,10)*+{\f A_{\tau}}="00"; (30,10)*+{\g A}="10";(0,-20)*+{\fTAlg_{w}}="02";(30,-20)*+{\f B}="11";(0,-5)*+{\g {\overline{A}}}="01";
{\ar_{\overline{j}} "00"; "01"}; 
{\ar_{L} "01"; "02"}; 
{\ar_{U} "02"; "11"}; 
{\ar^{j} "00"; "10"}; 
{\ar^{H} "10"; "11"}; 
{\ar@{.>}^{E_{w}} "10"; "02"}; 
{\ar@{.>}_{\overline{E}} "10"; "01"}; 
\endxy
\hspace{2cm}
\xy
(0,10)*+{\f A_{\tau}}="00"; (40,10)*+{\g A}="10";(20,-20)*+{\fTAlg_{w}}="02";(40,-20)*+{\f B}="11";(0,-20)*+{\g {\overline{A}}}="01";
{\ar_{\overline{j}} "00"; "01"}; 
{\ar_{L} "01"; "02"}; 
{\ar_{U} "02"; "11"}; 
{\ar^{j} "00"; "10"}; 
{\ar^{H} "10"; "11"}; 
{\ar@{.>}^{\overline{E}} "10"; "01"}; 
\endxy
$$
with the left leg rewritten as the composite $L \circ \overline{j}$.  Since $L$ is \f F-fully faithful (\emph{2-fully faithful on tight and loose parts}) it is orthogonal to each bijective on objects \f F-functor, in particular $j$, so that we obtain a unique diagonal filler $\overline{E}:\g A \to \overline{\g A}$ making the two leftmost triangles commute.
\\
Our goal is to show that $E_{w}$ is an equivalence of \f F-categories - but since $L$ is an equivalence this is, by 2 out of 3, equivalently to show that $\overline{E}$ is an equivalence of \f F-categories.  Now consider the square on the right.  The bottom leg is $w$-doctrinal as both of its components are, $L$ by Proposition~\ref{prop:equivalences}.  Since the \f F-category $\overline{\g A}$ is equivalent to $\fTAlg_{w}$ it has the same completeness properties, so that, using Theorem~\ref{thm:orthogonality} again, the left leg is orthogonal to each $w$-doctrinal \f F-functor.  Therefore the right commutative square consists of two orthogonal decompositions of a common \f F-functor and we conclude that $\overline{E}$ is an isomorphism.
\end{proof}
Note that although Theorem~\ref{thm:monadicity}, in each of its variants, only asks that $\g A$ admits certain limits it follows that $H$ creates those limits: for $U:\fTAlg_{w} \to \f B$ does so by Proposition~\ref{prop:limits} and $E:\g A \to \fTAlg_{w}$ is an equivalence of \f F-categories.  In applying the theorem this is often useful in that it tells us how these limits must be constructed in \g A.
 \subsection{\f F-categorical monadicity}
Our focus has been upon 2-monads but indeed the above results extend in a routine way to cover \f F-categorical monadicity too.  Let us briefly explain how this goes.  An \f F-monad \cite{Lack2011Enhanced} is a monad in the 2-category \fcat and so consists of an \f F-functor $T:\g A \to \g A$ and two \f F-natural transformations satisfying the usual equations.  2-monads are just \f F-monads on 2-categories viewed as \f F-categories.  An \f F-monad $T$ induces 2-monads $T_{\tau}$ and $T_{\lambda}$ and so we have strict $T_{\tau}$ and $T_{\lambda}$-algebra morphisms as in the two leftmost diagrams below.
$$\xy
(0,0)*+{TA}="00"; (20,0)*+{TB}="10"; 
(0,-20)*+{A}="01"; (20,-20)*+{B}="11";
{\ar^{Tf} "00";"10"};
{\ar ^{b}"10";"11"};
{\ar _{a}"00";"01"};
{\ar _{f} "01";"11"};
\endxy
\hspace{1cm}
\xy
(0,0)*+{TA}="00"; (20,0)*+{TB}="10"; 
(0,-20)*+{A}="01"; (20,-20)*+{B}="11";
{\ar@{~>}^{Tf} "00";"10"};
{\ar ^{b}"10";"11"};
{\ar _{a}"00";"01"};
{\ar@{~>}_{f} "01";"11"};
\endxy
\hspace{1cm}
\xy
(0,0)*+{TA}="00"; (20,0)*+{TB}="10"; 
(0,-20)*+{A}="01"; (20,-20)*+{B}="11";
{\ar@{~>}^{Tf} "00";"10"};
{\ar ^{b}"10";"11"};
{\ar _{a}"00";"01"};
{\ar@{~>}_{f} "01";"11"};
{\ar@{=>}^{\overline{f}}(10,-7)*+{};(10,-13)*+{}};
\endxy
$$
These are the tight and loose morphisms of the Eilenberg-Moore \f F-category which is denoted by $\fTAlg_{s}$.  We also have pseudo, lax and colax $T_{\lambda}$-morphisms -- a lax $T_{\lambda}$-morphism is drawn above.  These are the loose morphisms of the \f F-categories $\fTAlg_{w}$ whose tight morphisms are the strict $T_{\tau}$-algebra maps.  Now for each $w \in \{l,p,c\}$ we have the $\f F_{2}$-category whose tight and loose morphisms are the strict $T_{\tau}$ and $T_{\lambda}$-morphisms and whose very loose morphisms are the $w\textnormal{-}T_{\lambda}$-morphisms: this is captured by the inclusion \f F-functor $j_{w}:\fTAlg_{s} \to \fTAlg_{w}$.  We have the evident forgetful \f F-functors $U_{s}:\fTAlg_{s} \to \g A$ and $U_{w}:\fTAlg_{w} \to \g A$ commuting with $j_{w}$ over the base.  The key point for our applications is that these have the same properties as in the 2-monads case: namely $U_{s}$ creates all limits, $U_{w}$ creates $\overline{w}$-limits of loose morphisms (this follows from Theorem 5.13 of \cite{Lack2011Enhanced}) and $U_{w}$ is $w$-doctrinal.  With these facts in place we can give our monadicity theorem for \f F-monads -- we leave it to the reader to formulate the naturality of the Eilenberg-Moore extensions, which can be done using \emph{$\f F_{3}$-categories}.
\begin{Theorem}\label{thm:fmonadicity}
Consider an $\f F_{2}$-category $\f A_{\tau} \to \f A_{\lambda} \to \f A_{\phi}$ over an \f F-category \g B as below
$$\xy
(0,0)*+{\g A_{\tau,\lambda}}="00";
(25,0)*+{\g A_{\tau,\phi}}="20";
(12.5,-20)*+{\g B}="1-2";
{\ar^{j} "00"; "20"}; 
{\ar_{G} "00"; "1-2"}; 
{\ar^{H} "20"; "1-2"}; 
\endxy$$
(this means that ${G}_{\tau}=H_{\tau}$).  Suppose that $G$ has a left \f F-adjoint so that we have the Eilenberg-Moore comparison \f F-functor $E:\g A_{\tau,\lambda} \to \fTAlg_{s}$ over the base \g B.\\
Now let $w \in \{l,p,c\}$ and suppose that both $\g A_{\tau,\lambda}$ and $\g A_{\tau,\phi}$ admit $\overline{w}$-limits of loose morphisms.  Then there exists a unique \f F-functor $E_{w}:\g A_{\tau,\phi} \to \fTAlg_{w}$ extending $E$ and living over the base, as depicted by the following everywhere commutative diagram.
$$\xy
(0,0)*+{\g A_{\tau,\lambda}}="00";
(25,0)*+{\g A_{\tau,\phi}}="10";
(0,-20)*+{\fTAlg_{s}}="0-2";
(25,-20)*+{\fTAlg_{w}}="1-2";
(40,-10)*+{\g B}="2-1";
{\ar^{j} "00"; "10"}; 
{\ar^{j_{w}} "0-2"; "1-2"}; 
{\ar^{H} "10"; "2-1"}; 
{\ar_{E} "00"; "0-2"}; 
{\ar@{.>}_{E_{w}} "10"; "1-2"}; 
{\ar_{U_{w}} "1-2"; "2-1"}; 
\endxy$$
If $G$ is \f F-monadic, \g B admits $\overline{w}$-limits of loose morphisms and $H$ is $w$-doctrinal then $E_{w}$ is an equivalence of \f F-categories, and an isomorphism of \f F-categories whenever $G$ is strictly monadic.
\end{Theorem}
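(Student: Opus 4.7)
The approach is to adapt the proofs of Theorems~\ref{thm:EMExtension} and~\ref{thm:monadicity} to the \f F-enriched setting. The key observation is that Theorem~\ref{thm:orthogonality} is already formulated so that the target \f F-functor may have an arbitrary \f F-category codomain; combined with the \f F-monad analogue of Corollary~\ref{thm:algdoctrinal} asserting that $U_{w}:\fTAlg_{w}\to\g B$ is $w$-doctrinal (noted in the discussion preceding the theorem), it applies directly with an \f F-category base $\g B$ in place of a 2-category.

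For existence and uniqueness of $E_{w}$ I would argue by two applications of Theorem~\ref{thm:orthogonality}. Write $j_{\tau}^{\lambda}:\f A_{\tau}\to\g A_{\tau,\lambda}$ and $j_{\tau}^{\phi}:\f A_{\tau}\to\g A_{\tau,\phi}$ for the tight-part inclusions, so that $j_{\tau}^{\phi}=j\circ j_{\tau}^{\lambda}$. Both $\g A_{\tau,\lambda}$ and $\g A_{\tau,\phi}$ admit $\overline{w}$-limits of loose morphisms by hypothesis, hence both $j_{\tau}^{\lambda}$ and $j_{\tau}^{\phi}$ are orthogonal to $U_{w}$. Orthogonality of $j_{\tau}^{\phi}$ applied to the commutative square with left-bottom leg $j_{w}E\circ j_{\tau}^{\lambda}$ and right-top leg $H$ yields a unique $E_{w}$ with $U_{w}E_{w}=H$ and $E_{w}\circ j_{\tau}^{\phi}=j_{w}E\circ j_{\tau}^{\lambda}$. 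To upgrade the second identity to $E_{w}\circ j=j_{w}\circ E$, observe that both sides fill the corresponding square based on $j_{\tau}^{\lambda}$ and $U_{w}$ (using $Hj=G=U_{s}E$), and so coincide by orthogonality of $j_{\tau}^{\lambda}$; uniqueness of $E_{w}$ as a filler follows from orthogonality of $j_{\tau}^{\phi}$ in the same way. The same two-step argument, with any $w$-doctrinal \f F-functor in place of $U_{w}$, furthermore shows that $j\in{}^{\bot}\wdoct$.

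For the monadicity claim I would follow Theorem~\ref{thm:monadicity}. In the strict case $E$ is an isomorphism of \f F-categories; assuming in addition that $\g B$ admits $\overline{w}$-limits of loose morphisms, so do $\fTAlg_{s}$ and $\fTAlg_{w}$ via the \f F-monad analogue of Proposition~\ref{prop:limits}. Applying the same uniqueness-of-fillers argument to the tight-part inclusions into $\fTAlg_{s}$ and $\fTAlg_{w}$ gives $j_{w}\in{}^{\bot}\wdoct$, whence $j_{w}E\in{}^{\bot}\wdoct$ too. Combined with $j\in{}^{\bot}\wdoct$ (already shown) and $H\in\wdoct$ (by hypothesis), the square with sides $j$, $H$, $j_{w}E$ and $U_{w}$ presents the common composite $G:\g A_{\tau,\lambda}\to\g B$ as two $({}^{\bot}\wdoct,\wdoct)$-decompositions, and $E_{w}$ is forced to be an isomorphism by essential uniqueness of such decompositions. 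For the equivalence case I would follow the latter half of the proof of Theorem~\ref{thm:monadicity}, factoring $j_{w}E$ as an identity-on-objects $\overline{j}:\g A_{\tau,\lambda}\to\overline{\g A}$ followed by an \f F-fully faithful $L:\overline{\g A}\to\fTAlg_{w}$, so that $L$ is an equivalence of \f F-categories, and reducing to the strict case for the induced filler $\overline{E}:\g A_{\tau,\phi}\to\overline{\g A}$.

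The main obstacle is establishing $j_{w}\in{}^{\bot}\wdoct$: unlike in the 2-monad setting where $j_{w}$ is itself a tight-part inclusion, here $\fTAlg_{s}$ carries nontrivial loose morphisms (the strict $T_{\lambda}$-morphisms) and Theorem~\ref{thm:orthogonality} does not apply to $j_{w}$ directly. The fix is to verify that $\fTAlg_{s}$ admits $\overline{w}$-limits of loose morphisms, which holds because such a limit of strict $T_{\lambda}$-morphisms computed in $\fTAlg_{w}$ automatically lives in $\fTAlg_{s}$, and then to invoke the uniqueness-of-fillers trick through the shared tight part.
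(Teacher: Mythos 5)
Your proposal is correct and follows essentially the same route as the paper: the paper also obtains $E_{w}$ as the unique filler against the $w$-doctrinal $U_{w}:\fTAlg_{w}\to\g B$ after showing $j:\g A_{\tau,\lambda}\to\g A_{\tau,\phi}\in{}^{\bot}\wdoct$ by cancellation through the shared tight part $\f A_{\tau}$ (your two-step orthogonality argument is exactly this), and it treats the monadicity claims as the same modification of Theorem~\ref{thm:monadicity} that you spell out, including the point that $j_{w}:\fTAlg_{s}\to\fTAlg_{w}$ is no longer a tight-part inclusion. One small tidy-up: the cleanest reason that $\fTAlg_{s}$ admits $\overline{w}$-limits of loose morphisms is the fact, recorded before the theorem, that $U_{s}:\fTAlg_{s}\to\g B$ creates all limits, rather than your informal remark that such limits computed in $\fTAlg_{w}$ automatically lie in $\fTAlg_{s}$.
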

\begin{proof}
The outside of the diagram clearly commutes -- since $Hj=G$ and $U_{w}j_{w}=U_{s}$ this just amounts to the fact that the Eilenberg-Moore comparison $E$ satisfies $U_{s}E=G$.  Now $U_{w}:\fTAlg_{w} \to \g B$ is $w$-doctrinal so that if we can show that the inclusion $j:\g A_{\tau,\lambda} \to \g A_{\tau,\phi}$ belongs to $^{\bot}\wdoct$ then we will obtain $E_{w}$ as the unique filler.  Now have a commutative triangle of inclusions
$$\xy
(0,0)*+{\f A_{\tau}}="00";
(25,0)*+{\g A_{\tau,\lambda}}="20";
(25,-15)*+{\g A_{\tau,\phi}}="1-2";
{\ar^{j} "00"; "20"}; 
{\ar_{j} "00"; "1-2"}; 
{\ar^{j} "20"; "1-2"}; 
\endxy$$
in which the two $j$'s moving from left to right belong to $^{\bot}\wdoct$ by Theorem~\ref{thm:orthogonality}; thus by 2 out of 3 $j:\g A_{\tau,\lambda} \to \g A_{\tau,\phi} \in ^{\bot}\wdoct$.  As such we obtain the Eilenberg-Moore extension $E_{w}$ as the unique filler.  The remainder of the proof is a straightforward modification of the proof of Theorem~\ref{thm:monadicity}.
\end{proof}

\section{Examples and applications}
We now turn to examples.  We begin by completing our running example of monoidal categories.  Of course it is well known that monoidal categories, and each flavour of morphism between them, can be described using 2-monads -- see Section 5.5 of \cite{Lack2010A-2-categories} for an argument via colimit presentations -- although this has not previously been established by application of a monadicity theorem.  We then turn to more complex examples.  Our final example, in 7.3, is new and typical of the kind of result which cannot be established using techniques, such as colimit presentations, that require explicit knowledge of a 2-monad.
\subsection{Monoidal categories}
Let us focus, as usual, on the lax monoidal functors of $\fMonCat_{l}$.  From Example~\ref{thm:DoctrinalMonoidal} we know that $V:\fMonCat_{l} \to \Cat$ satisfies $l$-doctrinal adjunction.  It is clearly locally faithful and reflects identity 2-cells.  From Example~\ref{thm:LimitsMonoidal} we know that $V:\fMonCat_{l} \to \Cat$ creates colax limits of loose morphisms.  Therefore to apply Theorem~\ref{thm:monadicity} and establish monadicity it remains to verify that the 2-functor $V_{s}:\MonCat_{s} \to \Cat$ is monadic.  That $V_{s}$ strictly creates $V_{s}$-absolute coequalisers (in the enriched sense) is true by essentially the same argument given for groups in 6.8 of \cite{Mac-Lane1971Categories}; by Beck's theorem in the enriched setting \cite{Dubuc1970Kan-extensions} it follows that $V_{s}$ is strictly monadic so long as it has a left 2-adjoint.\newline{}
Proposition 3.1 of \cite{Blackwell1989Two-dimensional} asserts that a 2-functor preserving \emph{cotensors with \atwo} admits a left 2-adjoint just when its underlying functor admits a left adjoint.  Now cotensors with \atwo are, in fact, just colax limits of identity arrows.  That these are created by $V_{s}$ follows from the fact that $V:\fMonCat_{l} \to \Cat$ creates colax limits of loose morphisms.  It therefore remains to show that the underlying functor $(V_{s})_{0}$ admits a left adjoint.  Since this functor creates all limits it suffices to show that $(V_{s})_{0}$ satisfies the solution set condition.  For this it suffices to show that given a small category $A$ each functor $F:A \to C=U\overline{C}$ to a monoidal category factors as $ME:A \to B \to C$ with $B$ monoidal, $M$ strict monoidal and the cardinality of the set of morphisms $Mor(B)$ bounded by that of $Mor(A)$.  Here $B$ will be the monoidal subcategory of $C$ generated by the image of $F$ and $M$ the inclusion of this monoidal subcategory.\newline{}
For the induced 2-monad $T$ on \Cat we now conclude, by Theorem~\ref{thm:monadicity}, that the isomorphism of 2-categories $E:\MonCat_{s} \to \TAlgs$ over \Cat extends uniquely to an isomorphism of \f F-categories $E_{l}:\fMonCat_{l} \to \fTAlg_{l}$ over \Cat.  Likewise one can verify, in an entirely similar way, that $V:\fMonCat_{w} \to \Cat$ satisfies the conditions of Theorem~\ref{thm:monadicity} in the cases $w \in \{p,c\}$.  It follows that we have isomorphisms $E_{w}:\fMonCat_{w} \to \fTAlg_{w}$ over \Cat for each $w \in \{l,p,c\}$. By Theorem~\ref{thm:naturality} these isomorphisms are natural in $p \leq l$ and $p \leq c$ in the sense of Diagram 2 of the Introduction.

\subsection{Categories with structure and variants}
Of course there was nothing special about our taking monoidal categories in the preceding section.  The same arguments can be used to establish monadicity of categories with any kind of algebraically specified structure and their various flavours of morphisms: categories with chosen limits of some kind for instance, distributive categories and so forth.  All of these cases are well known to be monadic using colimit presentations of 2-monads, although it requires substantial and laborious calculation to use that theory to establish monadicity in the detailed manner above.  Such a detailed treatment using colimit presentations, one expressed in terms of isomorphisms of 2-categories or \f F-categories, will not be found in the literature. Colimit presentations are one of the two standard techniques for understanding the monadicity of weaker kinds of morphisms; the other is direct calculation with a 2-monad known to exist.  As a representative example of this technique consider a small 2-category $\f J$ and the forgetful 2-functor $U:[\f J,\Cat] \to [ob \f J,\Cat]$ which restricts presheaves to families along the inclusion $ob \f J \to \f J$.  $U$ has a left 2-adjoint $F$ given by left Kan extension and is strictly monadic by Beck's theorem; moreover the induced 2-monad $T=UF$ admits a simple pointwise description: at $X \in [ob \f J,\Cat]$ we have $TX(j) = \Sigma_{i}\f J(i,j) \times Xi$.  Using this formula (as in \cite{Blackwell1989Two-dimensional}) one directly calculates that $T$-pseudomorphisms bijectively correspond to pseudonatural transformations and so on, eventually deducing an isomorphism $Ps(\f J,\Cat) \to \TAlg_{p}$.  It is in such cases, more specifically when $T$ is not so simple, that our results have most value.  
\\
An example of this kind was given in \cite{Kelly2000On-the}.  Given a complete and cocomplete symmetric monoidal closed category \f V the authors take as base the 2-category \vcat of small \f V-categories.  For a small class of weights $\Phi$ they consider the 2-category $\phicol$ (we will write $\phicol_{s}$) of \f V-categories with \emph{chosen} $\Phi$-weighted colimits, \f V-functors preserving those colimits strictly and \f V-natural transformations.  This 2-category lives over $\vcat$ via a forgetful 2-functor $U_{s}:\phicol_{s} \to \vcat$.  One also has \f V-functors preserving colimits in the usual, up to isomorphism, sense: these are the loose morphisms of the \f F-category $\fphicol_{p}:\phicol_{s} \to \phicol_{p}$ which sits over \vcat via a forgetful \f F-functor $U:\fphicol_{p} \to \vcat$.  The authors show that $U_{s}:\phicol_{s} \to \vcat$ is strictly monadic and then, by calculating directly with the induced 2-monad $T$, show that one obtains an isomorphism of 2-categories $U:\fphicol_{p} \to \fTAlg_{p}$.  Let us show how this can be deduced from Theorem~\ref{thm:monadicity}.  Firstly observe that $U$ satisfies $p$-doctrinal adjunction: this follows from the fact that any equivalence of \f V-categories preserves colimits.  Again since $U$ is locally fully faithful it is certainly locally faithful and reflects identity 2-cells.  It is not hard to see that $\fphicol_{p}$ admits pseudo-limits of loose morphisms -- indeed this is shown in Section 5 of \cite{Kelly2000On-the}.  It then follows immediately from Theorem~\ref{thm:monadicity} that the isomorphism of 2-categories $E:\phicol_{s} \to \TAlg_{s}$ extends uniquely to an isomorphism of \f F-categories $E:\fphicol_{p} \to \fTAlg_{p}$ over \vcat.
\\
Furthermore, because left adjoints preserve colimits, $U:\fphicol_{p} \to \vcat$ has the additional property of satisfying $c$-doctrinal adjunction; since it is isomorphic to $\fTAlg_{p}$ it also admits, by Theorem 2.6 of \cite{Blackwell1989Two-dimensional}, lax limits of loose morphisms.  Therefore Theorem~\ref{thm:monadicity} ensures that the composite $\fphicol_{p} \to \fTAlg_{p} \to \fTAlg_{c}$ is also an isomorphism, thus explaining why the colax and pseudo $T$-morphisms coincide as those \f V-functors which preserve $\Phi$-colimits.  Again one easily applies Theorem~\ref{thm:monadicity} to show that $\fTAlg_{l}$ is isomorphic to the \f F-category $\fphicol_{l}$ whose loose morphisms are arbitary \f V-functors.  This final isomorphism highlights the fact that $U_{l}:\TAlg_{l} \to \vcat$ is 2-fully faithful: 2-monads with this property are called \emph{lax idempotent/Kock-Z\"oberlein} and have been carefully studied in \cite{Kelly1997On-property-like}.

\subsection{In a monoidal 2-category}
In a monoidal category \f C one can consider the category of monoids $\Mon(\f C)$ or of commutative monoids.  If the forgetful functor $U:\Mon(\f C) \to \f C$ has a left adjoint then Beck's theorem can be applied, with no further information, to show that $U$ is monadic.  For our final example we study the analogous situation in the context of a monoidal 2-category \f C, in which one can consider monoids, pseudomonoids (generalising monoidal categories), braided pseudomonoids and so on.  We consider only the simplest case of monoids because, in the absence of a suitable graphical calculus, it is difficult to encode diagrams compactly.  We have the 2-category of monoids, strict monoid morphisms and monoid transformations $\Mon(\f C)_{s}$ and a forgetful 2-functor $U_{s}:\Mon(\f C)_{s} \to \f C$.  Just as before, the enriched version of Beck's theorem \cite{Dubuc1970Kan-extensions} can be applied to show that if $U_{s}$ has a left 2-adjoint then it is monadic.  However we now also have (lax/pseudo/colax)-morphisms of monoids and we would like to understand that these too are monadic in the appropriate sense: this is the content, under completeness conditions on the base, of the present example.
\\
By a monoidal 2-category \f C we will mean a monoidal \f V-category where $\f V=\Cat$: this satisfies the same axioms as a monoidal category with the exception that the tensor product, the associator and the other data involved are now 2-functorial and 2-natural.  In working with \f C we will write as though it were strict monoidal -- this is justified in the theorem that follows -- and will use juxtaposition for the tensor product.  A monoid in \f C is just a monoid in the usual sense.  Given monoids $(X,m_{X},i_{X})$ and $(Y,m_{Y},i_{Y})$ a lax monoid map $(f,\overline{f},f_{0}):(X,m_{X},i_{X}) \to (Y,m_{Y},i_{Y})$ consists of an arrow $f:X \to Y$ and 2-cells as below
$$\xy
(0,0)*+{X^{2}}="00"; (20,0)*+{Y^{2}}="10"; (0,-12)*+{X}="01"; (20,-12)*+{Y}="11";
 {\ar^{f^{2}} "00";"10"};
{\ar_{f} "01";"11"};
{\ar_{m_{X}}"00";"01"};
{\ar^{m_{Y}}"10";"11"};
{\ar@{=>}_{\overline{f}}(10,-4)*+{};(10,-9)*+{}};
\endxy
\hspace{1cm}
\xy
(0,0)*+{I}="00"; (-10,-12)*+{X}="10";  (10,-12)*+{Y}="11";
 {\ar_{i_{X}} "00";"10"};
{\ar^{i_{Y}} "00";"11"};
{\ar_{f}"10";"11"};
{\ar@{=>}_{f_{0}}(0,-4)*+{};(0,-9)*+{}};
\endxy$$
such that the equation
$$\xy
(0,0)*+{X^{3}}="00"; (20,0)*+{XY^{2}}="10"; (40,0)*+{Y^{3}}="20"; 
(0,-12)*+{X^{2}}="01"; (20,-12)*+{XY}="11"; (40,-12)*+{Y^{2}}="21"; (0,-24)*+{X}="02"; (40,-24)*+{Y}="22";
%%%%%%%%%
 {\ar^{1ff} "00";"10"};
  {\ar^{f11} "10";"20"};
{\ar_{1m_{X}} "00";"01"};
{\ar^{1m_{Y}}"10";"11"};
{\ar^{1m_{Y}}"20";"21"};
{\ar@{=>}_{1\overline{f}}(10,-4)*+{};(10,-9)*+{}};
%%%%%%%%%%%
 {\ar_{1f} "01";"11"};
  {\ar_{f1} "11";"21"};
{\ar_{f} "02";"22"};
{\ar_{m_{X}}"01";"02"};
{\ar^{m_{Y}}"21";"22"};
{\ar@{=>}_{\overline{f}}(20,-16)*+{};(20,-21)*+{}};
\endxy
\hspace{0.25cm}
\xy
(0,-12)*+{=};
\endxy
\hspace{0.25cm}
\xy
(0,0)*+{X^{3}}="00"; (20,0)*+{X^{2}Y}="10"; (40,0)*+{Y^{3}}="20"; 
(0,-12)*+{X^{2}}="01"; (20,-12)*+{XY}="11"; (40,-12)*+{Y^{2}}="21"; (0,-24)*+{X}="02"; (40,-24)*+{Y}="22";
%%%%%%%%%
 {\ar^{11f} "00";"10"};
  {\ar^{ff1} "10";"20"};
{\ar_{m_{X}1} "00";"01"};
{\ar_{m_{X}1}"10";"11"};
{\ar^{m_{Y}1}"20";"21"};
{\ar@{=>}_{\overline{f}1}(30,-4)*+{};(30,-9)*+{}};
%%%%%%%%%%%
 {\ar_{1f} "01";"11"};
  {\ar_{f1} "11";"21"};
{\ar_{f} "02";"22"};
{\ar_{m_{X}}"01";"02"};
{\ar^{m_{Y}}"21";"22"};
{\ar@{=>}_{\overline{f}}(20,-16)*+{};(20,-21)*+{}};
\endxy$$
holds and such that both composite 2-cells
$$\xy
(0,0)*+{X}="00"; (20,0)*+{X}="10"; (40,0)*+{Y}="20"; 
(0,-12)*+{X^{2}}="01"; (20,-12)*+{XY}="11"; (40,-12)*+{Y^{2}}="21"; (0,-24)*+{X}="02"; (40,-24)*+{Y}="22";
%%%%%%%%%
 {\ar^{1} "00";"10"};
  {\ar^{f} "10";"20"};
{\ar_{1i_{X}} "00";"01"};
{\ar_{1i_{Y}}"10";"11"};
{\ar^{1i_{Y}}"20";"21"};
{\ar@{=>}_{1f_{0}}(10,-4)*+{};(10,-9)*+{}};
%%%%%%%%%%%
 {\ar_{1f} "01";"11"};
  {\ar_{f1} "11";"21"};
{\ar_{f} "02";"22"};
{\ar_{m_{X}}"01";"02"};
{\ar^{m_{Y}}"21";"22"};
{\ar@{=>}_{1\overline{f}}(20,-16)*+{};(20,-21)*+{}};
\endxy
\hspace{1cm}
\xy
(0,0)*+{X}="00"; (20,0)*+{Y}="10"; (40,0)*+{Y}="20"; 
(0,-12)*+{X^{2}}="01"; (20,-12)*+{XY}="11"; (40,-12)*+{Y^{2}}="21"; (0,-24)*+{X}="02"; (40,-24)*+{Y}="22";
%%%%%%%%%
 {\ar^{f} "00";"10"};
  {\ar^{1} "10";"20"};
{\ar_{i_{X}1} "00";"01"};
{\ar_{i_{X}1}"10";"11"};
{\ar^{i_{Y}1}"20";"21"};
{\ar@{=>}_{f_{0}1}(30,-4)*+{};(30,-9)*+{}};
%%%%%%%%%%%
 {\ar_{1f} "01";"11"};
  {\ar_{f1} "11";"21"};
{\ar_{f} "02";"22"};
{\ar_{m_{X}}"01";"02"};
{\ar^{m_{Y}}"21";"22"};
{\ar@{=>}_{1\overline{f}}(20,-16)*+{};(20,-21)*+{}};
\endxy$$
are identities.  A monoid transformation $\alpha:(f,\overline{f},f_{0}) \Rightarrow (g,\overline{g},g_{0})$ is a 2-cell $\alpha:f \Rightarrow g$ satisfying the equations
$$
\xy
(0,0)*+{X^{2}}="11";
(20,0)*+{Y^{2}}="31"; (0,-15)*+{X}="12";(20,-15)*+{Y}="32";
{\ar^{m_{Y}} "31"; "32"}; 
{\ar@/_1pc/_{g} "12"; "32"}; 
{\ar_{m_{X}} "11"; "12"}; 
{\ar@/^1pc/^{f^{2}} "11"; "31"}; 
{\ar@/_1pc/_{g^{2}} "11"; "31"}; 
{\ar@{=>}^{\overline{g}}(10,-9)*+{};(10,-15)*+{}};
{\ar@{=>}_{\alpha^{2}}(10,2)*+{};(10,-3)*+{}};
\endxy
\xy
(0,-7.5)*+{=};
\endxy
\xy
(0,0)*+{X^{2}}="11";
(20,0)*+{Y^{2}}="31"; (0,-15)*+{X}="12";(20,-15)*+{Y}="32";
{\ar^{m_{Y}} "31"; "32"}; 
{\ar@/^1pc/^{f} "12"; "32"}; 
{\ar@/_1pc/_{g} "12"; "32"}; 
{\ar_{m_{X}} "11"; "12"}; 
{\ar@/^1pc/^{f^{2}} "11"; "31"}; 
{\ar@{=>}_{\alpha}(10,-13)*+{};(10,-18)*+{}};
{\ar@{=>}_{\overline{f}}(10,0)*+{};(10,-6)*+{}};
\endxy
\hspace{0.5cm}
\xy
(0,2)*+{I}="00"; (-10,-15)*+{X}="10";  (10,-15)*+{Y}="11";
 {\ar@/_0.5pc/_{i_{X}} "00";"10"};
{\ar@/^0.5pc/^{i_{Y}} "00";"11"};
{\ar@/^1pc/^{f}"10";"11"};
{\ar@/_1pc/_{g}"10";"11"};
{\ar@{=>}_{f_{0}}(0,-2)*+{};(0,-7)*+{}};
{\ar@{=>}_{\alpha}(0,-13)*+{};(0,-18)*+{}};
\endxy
\xy
(0,-7.5)*+{=};
\endxy
\xy
(0,2)*+{I}="00"; (-10,-15)*+{X}="10";  (10,-15)*+{Y}="11";
 {\ar@/_0.5pc/_{i_{X}} "00";"10"};
{\ar@/^0.5pc/^{i_{Y}} "00";"11"};
{\ar@/_1pc/_{g}"10";"11"};
{\ar@{=>}_{g_{0}}(0,-5)*+{};(0,-11)*+{}};
\endxy$$
These are the 2-cells of the \f F-category $\fMon(\f C)_{l}:\Mon(\f C)_{s} \to \Mon(\f C)_{l}$ of monoids, strict and lax monoid morphisms which sits over \f C via a forgetful \f F-functor $U:\fMon(\f C)_{l} \to \f C$.  Likewise we have pseudo and colax monoid morphisms and forgetful \f F-functors $U:\fMon(\f C)_{w} \to \f C$ for each $w \in \{l,p,c\}$.\newline{}
For a simple statement we assume in the following result that  \f C admits \emph{pie limits} \cite{Power1991A-characterization}: these are a good class of limits containing $w$-limits of arrows for each $w$.
\begin{Theorem}\label{thm:monoids}
Let \f C be a monoidal 2-category admitting pie limits and suppose that $U_{s}:\Mon(\f C)_{s} \to \f C$ has a left 2-adjoint.  Let $T$ be the induced 2-monad on $\f C$.  Then for each $w \in \{l,p,c\}$ we have isomorphisms of \f F-categories $\fMon(\f C)_{w} \to \fTAlg_{w}$ over \f C and these are natural in $w$.
\end{Theorem}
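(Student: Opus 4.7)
The plan is to apply Theorem~\ref{thm:monadicity} with $H = U:\fMon(\f C)_{w} \to \f C$ and to check its four hypotheses in turn, then to read off naturality from Theorem~\ref{thm:naturality}. First I would invoke Mac~Lane-style strictification for monoidal 2-categories (or just observe that all of the axioms involved remain meaningful in the non-strict setting up to the standard coherence isomorphisms) so as to justify the convention of writing juxtaposition for tensor and suppressing associators in the diagrams that define lax/pseudo/colax monoid maps.

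For hypothesis (1), I would verify strict monadicity of $U_{s}:\Mon(\f C)_{s} \to \f C$ by appealing to the enriched Beck theorem of \cite{Dubuc1970Kan-extensions}: a left 2-adjoint is assumed, and strictly as in the classical case for monoids, $U_{s}$ strictly creates $U_{s}$-absolute coequalisers because a coequaliser preserved by $\thg \otimes \thg \otimes \thg$ (which happens automatically when it is split) lifts uniquely to a coequaliser of monoids. Hypotheses (2) and (3) reduce to constructing and lifting limits: $\f C$ admits $\bar{w}$-limits of arrows since these are particular pie limits, and $\fMon(\f C)_{w}$ admits $\bar{w}$-limits of loose morphisms by a direct lift. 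For this I would mimic Example~\ref{thm:LimitsMonoidal}: given a $w$-monoid map $f:X \rightsquigarrow Y$, form the $\bar{w}$-limit $\bar{W}_{f}$ in $\f C$ with tight projections $p_{f}, q_{f}$ and comparison 2-cell $\lambda_{f}$, and then show that there is a \emph{unique} monoid structure on $\bar{W}_{f}$ making $p_{f}$ and $q_{f}$ strict monoid maps and $\lambda_{f}$ a monoid transformation. The multiplication and unit are forced by the requirement that $p_{f}$ and $q_{f}$ be strict and by the tensor/unit conditions for $\lambda_{f}$; associativity and unitality of the resulting multiplication on $\bar{W}_{f}$ translate exactly into the two coherence axioms for a $w$-monoid map, as in the monoidal case. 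The pie hypothesis is what guarantees that the relevant iterated limits (products, inserters, equifiers) all exist in $\f C$ so that the construction carries out.

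For hypothesis (4), $w$-doctrinal adjunction for $U$ follows by an argument directly dual to Example~\ref{thm:DoctrinalMonoidal}: given a strict monoid map $f:X \to Y$ and an adjunction $(\varepsilon, f \dashv g, \eta)$ in $\f C$, one equips $g$ with $\overline{g}$ and $g_{0}$ as the mates, under this adjunction, of the (identity) structure 2-cells of $f$; precisely the standard calculus of mates shows that $\overline{g}$ satisfies the associativity and unit axioms, and that $\eta,\varepsilon$ promote to monoid transformations. The analogous statements for $w = p$ (mates of identities under adjoint equivalences are invertible) and $w = c$ (dually) then give weak $w$-doctrinal adjunction. That $U$ reflects identity 2-cells and is locally faithful is obvious from the definition of monoid transformation, so Proposition~\ref{prop:ids}.1 and Lemma~\ref{thm:doctrinalLemma}.2 upgrade this to $w$-doctrinal.

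With hypotheses (1)--(4) in hand, Theorem~\ref{thm:monadicity} yields for each $w \in \{l,p,c\}$ a unique extension of the Eilenberg-Moore isomorphism $E:\Mon(\f C)_{s} \to \TAlg_{s}$ to an isomorphism $E_{w}:\fMon(\f C)_{w} \to \fTAlg_{w}$ over $\f C$; naturality in the inclusions $p \leq l$ and $p \leq c$ is then immediate from Theorem~\ref{thm:naturality} applied to the $\f F_{2}$-category of monoids, strict, pseudo and $w$-monoid morphisms. The main technical obstacle is hypothesis (2): the verification that pie limits in $\f C$ can be promoted to $\bar{w}$-limits of loose morphisms in $\fMon(\f C)_{w}$ requires unwinding the coherence axioms of a $w$-monoid morphism and matching them, one by one, with the data that must commute for the projections to be strict and for $\lambda_{f}$ to be a monoid transformation; this is the exact dual, in a general monoidal 2-category, of the diagram chase performed in Example~\ref{thm:LimitsMonoidal}, and is where the choice of \emph{pie} (rather than, say, only finite products) is needed to ensure the ambient limits exist.
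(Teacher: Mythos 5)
Your proposal is correct and follows essentially the same route as the paper's proof: reduce to a strict monoidal base, verify $w$-doctrinal adjunction for $U$ by taking mates as in Example~\ref{thm:DoctrinalMonoidal}, create $\bar{w}$-limits of loose morphisms in $\fMon(\f C)_{w}$ from the pie limits of $\f C$ by forcing the monoid structure on $\bar{W}_{f}$ exactly as in Example~\ref{thm:LimitsMonoidal}, obtain strict monadicity of $U_{s}$ from the enriched Beck theorem, and conclude via Theorems~\ref{thm:monadicity} and~\ref{thm:naturality}. The only point where the paper is more careful than your sketch is the strictification step: because the conclusion is an isomorphism over $\f C$ itself, the paper does not merely suppress associators but transfers the doctrinality and limit hypotheses back from the strictified base $\f D$ along the induced \f F-equivalences $\fMon(\f C)_{w} \to \fMon(\f D)_{w}$, using that $w$-doctrinal \f F-functors form an orthogonality class closed under 2-out-of-3 and that limits are invariant under \f F-equivalence.
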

\begin{proof}
Let us begin by showing that it suffices to suppose \f C to be strict monoidal.  A straightforward extension of the usual argument for monoidal categories shows that \f C is equivalent to a strict monoidal 2-category \f D via a strong monoidal 2-equivalence $E:\f C \to \f D$.  
$$\xy
(0,0)*+{\fMon(\f C)_{w}}="00"; (30,0)*+{\fMon(\f D)_{w}}="10"; 
(0,-10)*+{\f C}="01"; (30,-10)*+{\f D}="11";
{\ar^{E_{*}} "00";"10"};
{\ar ^{U_{D}}"10";"11"};
{\ar _{U_{C}}"00";"01"};
{\ar_{E} "01";"11"};
\endxy$$
Such an equivalence naturally lifts to a 2-equivalence $E_{*}:\Mon(\f C)_{w} \to \Mon(\f D)_{w}$ for each $w$ and so induces a commuting square of \f F-categories and \f F-functors as above with both horizontal legs equivalences of \f F-categories.  Now to apply Theorem~\ref{thm:monadicity} we must show that $U_{C}:\fMon(\f C)_{w} \to \f C$ is $w$-doctrinal and that $\fMon(\f C)_{w}$ has $\overline{w}$-limits of loose morphisms.  So suppose that $U_{D}$ and $\fMon(\f D)_{w}$ have these properties and let us deduce from these the corresponding properties for \f C.  Certainly if $U_{D}$ were $w$-doctrinal then $U_{C}$ would be too; for both horizontal legs, being equivalences, are $w$-doctrinal and such \f F-functors, being defined by lifting properties (as in Section 3.3), are closed under 2 out of 3.  Likewise any limits existing in $\fMon(\f D)_{w}$ exist in the \f F-equivalent $\fMon(\f C)_{w}$.  Therefore it suffices to suppose that \f C is strict monoidal.
\\
Now certainly $U:\fMon(\f C)_{l} \to \f C$ is locally faithful and reflects identity 2-cells.  Moreover given a strict monoid map $(f,\overline{f},f_{0}):X \to Y$ and adjunction $(\epsilon, f \dashv g,\eta) \in \f C$ taking mates gives 2-cells
$$\xy
(00,0)*+{Y^{2}}="11";(30,0)*+{X^{2}}="31"; (00,-25)*+{Y}="12";(30,-25)*+{X}="32";
(00,-15)*+{Y^{2}}="a";(30,-10)*+{X}="b";
{\ar^{m_{X}} "31"; "b"}; 
{\ar_{g} "12"; "32"}; 
{\ar_{m_{Y}} "a"; "12"}; 
{\ar^{g^{2}} "11"; "31"}; 
{\ar_{1} "11"; "a"}; 
{\ar^{f^{2}} "31"; "a"}; 
{\ar_{f} "b"; "12"}; 
{\ar^{1} "b"; "32"}; 
{\ar@{=>}_{\epsilon^{2}}(14,-3)*+{};(8,-6)*+{}};
{\ar@{=>}_{\eta}(24,-19)*+{};(18,-22)*+{}};
\endxy
\hspace{1cm}
\xy
(00,-7.5)*+{I}="a";(30,-7.5)*+{X}="b";
 (00,-22.5)*+{Y}="12";(30,-22.5)*+{X}="32";
{\ar^{i_{X}} "a"; "b"}; 
{\ar_{i_{Y}} "a"; "12"}; 
{\ar_{g} "12"; "32"}; 
{\ar_{f} "b"; "12"}; 
{\ar^{1} "b"; "32"}; 
{\ar@{=>}_{\eta}(24,-16)*+{};(18,-20)*+{}};
\endxy
$$
It is straightforward to see, by cancelling mates, that these give $g$ the structure of a lax monoid map $(g,\overline{g},g_{0})$, with respect to which $(\epsilon, f \dashv (g,\overline{g},g_{0}),\eta)$ is an adjunction in $\fMon(\f C)_{l}$.  Uniqueness of the lifted adjunction follows from Proposition~\ref{prop:ids}.1.  Therefore $U$ satisfies $l$-doctrinal adjunction.
\\
We will show that $U:\fMon(\f C)_{l} \to \f C$ creates colax limits of loose morphisms.  Consider a lax monoid map $(f,\overline{f},f_{0}):X \to Y$ and the colax limit $C$ of $f$ in \f C with colax cone as below
$$\xy
(0,0)*+{C}="00"; (-10,-12)*+{X}="10";  (10,-12)*+{Y}="11";
 {\ar_{p} "00";"10"};
{\ar^{q} "00";"11"};
{\ar_{f}"10";"11"};
{\ar@{=>}_{\lambda}(0,-4)*+{};(0,-9)*+{}};
\endxy$$
By the universal property of $C$ the composite colax cone left below induces a unique map $m_{C}:CC \to C$ such that $p\thing m_{C}=m_{X}\thing p^{2}$, $q\thing m_{C}=m_{Y}\thing q^{2}$ and such that the left equation below holds.  Likewise we obtain a unique $i_{C}:I \to C$ such that $p\thing i_{C}=i_{X}$, $q\thing i_{C}=i_{Y}$ and satisfying $\lambda \thing  i_{C}=f_{0}$ as on the right below.
$$\xy
(10,10)*+{C^{2}}="-10";
(0,0)*+{X^{2}}="00"; (20,0)*+{Y^{2}}="10"; (0,-12)*+{X}="01"; (20,-12)*+{Y}="11";
 {\ar_{p^{2}} "-10";"00"};
  {\ar^{q^{2}} "-10";"10"};
 {\ar_{f^{2}} "00";"10"};
 {\ar@{=>}_{\lambda^{2}}(10,7)*+{};(10,2)*+{}};
{\ar_{f} "01";"11"};
{\ar_{m_{X}}"00";"01"};
{\ar^{m_{Y}}"10";"11"};
{\ar@{=>}_{\overline{f}}(10,-5)*+{};(10,-10)*+{}};
\endxy
\xy
(0,0)*+{=};
\endxy
\xy
(10,10)*+{C^{2}}="-10";
(0,0)*+{X^{2}}="00"; (20,0)*+{Y^{2}}="10"; 
(10,-2)*+{C}="-1A";
(0,-12)*+{X}="01"; (20,-12)*+{Y}="11";
{\ar_{p^{2}} "-10";"00"};
{\ar^{q^{2}} "-10";"10"};
  %%%%%%%%%%%%%%%
{\ar_{p} "-1A";"01"};
{\ar^{q} "-1A";"11"};
{\ar|{m_{C}} "-10";"-1A"};
 {\ar@{=>}_{\lambda}(10,-5)*+{};(10,-10)*+{}};
{\ar_{f} "01";"11"};
{\ar_{m_{X}}"00";"01"};
{\ar^{m_{Y}}"10";"11"};
\endxy
\hspace{0.5cm}
\xy
(10,10)*+{I}="-10";
(0,-12)*+{X}="01"; (20,-12)*+{Y}="11";
  %%%%%%%%%%%%%%%
  {\ar@/_0.5pc/_{i_{X}} "-10";"01"};
  {\ar@/^0.5pc/^{i_{Y}} "-10";"11"};
%%%%%%%%%%%%%%%
 {\ar@{=>}_{f_{0}}(10,-1)*+{};(10,-7)*+{}};
{\ar_{f} "01";"11"};
\endxy
\xy
(0,0)*+{=};
\endxy
\xy
(10,10)*+{I}="-10";
(10,-2)*+{C}="-1A";
(0,-12)*+{X}="01"; (20,-12)*+{Y}="11";
  %%%%%%%%%%%%%%%
  {\ar@/_1pc/_{i_{X}} "-10";"01"};
  {\ar@/^1pc/^{i_{Y}} "-10";"11"};
%%%%%%%%%%%%%%%
{\ar_{p} "-1A";"01"};
{\ar^{q} "-1A";"11"};
{\ar|{i_{C}} "-10";"-1A"};
 {\ar@{=>}_{\lambda}(10,-5)*+{};(10,-10)*+{}};
{\ar_{f} "01";"11"};
\endxy
$$
If we can show $(C,m_{C},i_{C})$ to be a monoid then these combined equations will assert exactly that $p$ and $q$ are strict monoid maps and $\lambda:q \Rightarrow (f,\overline{f},f_{0}).p$ a monoid transformation.  To show that $m_{C}\thing (m_{C}1)  = m_{C}\thing (1m_{C}):C^{3} \to C$ amounts to showing that both paths coincide upon postcomposition with the components $p$, $q$ and $\lambda$ of the universal colax cone.  We have that $p\thing m_{C}\thing (m_{C}1)=m_{X}\thing p^{2}\thing (m_{C}1)=m_{X}\thing (m_{X}1)\thing p^{3}=m_{X}\thing (1m_{X})\thing p^{3}=m_{X}\thing p^{2}\thing (1m_{C})=p\thing m_{C}\thing (1m_{C})$ and similarly for $q$ so that associativity of $m_{C}$ will follow if we can show that both paths coincide upon postcomposition with $\lambda$.  Consider the following series of equalities
$$
\xy
(10,30)*+{C^{3}}="00";
(10,15)*+{C^{2}}="11";
(10,0)*+{C}="12";
(0,-12)*+{X}="03"; (20,-12)*+{Y}="13";
%%%%%%%%%%%%%%%%%%%%%%%%%%%%%%%%%%%%
 {\ar|{1m_{C}} "00";"11"};
%%%%%%%%%%%%%%%%%%%%%%%%%%%%%%%%%%%%
 %%%%%%%%%%%%%%%%%%%%%%%%%%%%%%%%%%%
  {\ar|{m_{C}} "11";"12"};
  {\ar_{p} "12";"03"};
    {\ar^{q} "12";"13"};
    {\ar_{f} "03";"13"};
     {\ar@{=>}_{\lambda}(10,-5)*+{};(10,-10)*+{}};
  %%%%%%%%%%%%%%%%%%%%%%%%%%%%%%%%%%%
(25,22)*+{=};
\endxy
\xy
(12,30)*+{C^{3}}="00";
(0,15)*+{X^{3}}="01";(12,15)*+{C^{2}}="11";(24,15)*+{Y^{3}}="21";
(0,0)*+{X^{2}}="02"; (24,0)*+{Y^{2}}="12";
(0,-12)*+{X}="03"; (24,-12)*+{Y}="13";
%%%%%%%%%%%%%%%%%%%%%%%%%%%%%%%%%%%%
 {\ar_{ppp} "00";"01"};
  {\ar^{qqq} "00";"21"};
 {\ar|{1m_{C}} "00";"11"};
  {\ar_{1m_{X}} "01";"02"};
 {\ar^{1m_{Y}} "21";"12"};
%%%%%%%%%%%%%%%%%%%%%%%%%%%%%%%%%%%%
 {\ar_{pp} "11";"02"};
  {\ar^{qq} "11";"12"};
 {\ar_{ff} "02";"12"};
 {\ar@{=>}_{\lambda\lambda}(12,9)*+{};(12,4)*+{}};
 %%%%%%%%%%%%%%%%%%%%%%%%%%%%%%%%%%%
{\ar_{f} "03";"13"};
{\ar_{m_{X}}"02";"03"};
{\ar^{m_{Y}}"12";"13"};
{\ar@{=>}_{\overline{f}}(12,-5)*+{};(12,-10)*+{}};
%%%%%%%%%%%%%%%%%%%%%%%%%%%%%%%%%%%
(34,22)*+{=};
\endxy
\xy
(26,30)*+{C^{3}}="00"; 
(13,22)*+{XC^{2}}="01";(39,22)*+{CY^{2}}="11"; 
(0,12)*+{X^{3}}="02"; (26,18)*+{C^{2}}="12"; (52,12)*+{Y^{3}}="22"; 
(13,10)*+{XC}="03"; (39,10)*+{CY}="13"; 
(0,0)*+{X^{2}}="04"; (26,0)*+{XY}="14"; (52,0)*+{Y^{2}}="24"; 
(0,-12)*+{X}="05"; (52,-12)*+{Y}="15";
%%%%%%%%%
 {\ar_{p11} "00";"01"};
 {\ar_{1pp} "01";"02"};
  {\ar|{p1} "12";"03"};
{\ar^{1qq} "00";"11"};
{\ar|{1q}"12";"13"};
{\ar^{q11}"11";"22"};
%%%%%%%%%%%%%%%%%%%%%%%
 {\ar|{1m_{C}} "01";"03"};
 {\ar_{1p} "03";"04"};
  {\ar^{1q} "03";"14"};
{\ar|{1m_{Y}}"11";"13"};
{\ar_{p1}"13";"14"};
{\ar^{q1}"13";"24"};
{\ar@{=>}_{1\lambda}(13,7)*+{};(13,2)*+{}};
{\ar@{=>}_{\lambda1}(39,7)*+{};(39,2)*+{}};
%%%%%%%%%%%%%%%%%%%
{\ar_{1m_{X}} "02";"04"};
{\ar|{1m_{C}}"00";"12"};
{\ar^{1m_{Y}}"22";"24"};
%%%%%%%%%%%
 {\ar_{1f} "04";"14"};
  {\ar_{f1} "14";"24"};
{\ar_{f} "05";"15"};
{\ar_{m_{X}}"04";"05"};
{\ar^{m_{Y}}"24";"15"};
{\ar@{=>}_{\overline{f}}(26,-4)*+{};(26,-9)*+{}};
%%%%%%%%%%%%%%%%%%
\endxy
$$
$$
\xy
(0,14)*+{=};
(26,18)*+{C^{3}}="00"; 
(13,10)*+{XC^{2}}="01";(39,10)*+{CY^{2}}="11"; 
(0,0)*+{X^{3}}="02"; (26,0)*+{XY^{2}}="12"; (52,0)*+{Y^{3}}="22"; 
(13,-2)*+{XC}="03"; (39,-2)*+{CY}="13"; 
(0,-12)*+{X^{2}}="04"; (26,-12)*+{XY}="14"; (52,-12)*+{Y^{2}}="24"; 
(0,-24)*+{X}="05"; (52,-24)*+{Y}="15";
%%%%%%%%%
 {\ar_{p11} "00";"01"};
 {\ar_{1pp} "01";"02"};
  {\ar^{1qq} "01";"12"};
{\ar^{1qq} "00";"11"};
{\ar_{p11}"11";"12"};
{\ar^{q11}"11";"22"};
%%%%%%%%%%%%%%%%%%%%%%%
 {\ar|{1m_{C}} "01";"03"};
 {\ar_{1p} "03";"04"};
  {\ar|{1q} "03";"14"};
{\ar|{1m_{Y}}"11";"13"};
{\ar|{p1}"13";"14"};
{\ar^{q1}"13";"24"};
{\ar@{=>}_{1\lambda}(13,-5)*+{};(13,-10)*+{}};
{\ar@{=>}_{\lambda1}(39,-5)*+{};(39,-10)*+{}};
%%%%%%%%%%%%%%%%%%%
{\ar_{1m_{X}} "02";"04"};
{\ar|{1m_{Y}}"12";"14"};
{\ar^{1m_{Y}}"22";"24"};
%%%%%%%%%%%
 {\ar_{1f} "04";"14"};
  {\ar_{f1} "14";"24"};
{\ar_{f} "05";"15"};
{\ar_{m_{X}}"04";"05"};
{\ar^{m_{Y}}"24";"15"};
{\ar@{=>}_{\overline{f}}(26,-16)*+{};(26,-21)*+{}};
(60,14)*+{=};
\endxy
\xy
(32,18)*+{C^{3}}="00"; 
(20,10)*+{XC^{2}}="01";(44,10)*+{CY^{2}}="11"; 
(0,0)*+{X^{3}}="02";(16,0)*+{X^{2}Y}="12"; (32,0)*+{XY^{2}}="22"; (55,0)*+{Y^{3}}="32"; 
(0,-12)*+{X^{2}}="03"; (32,-12)*+{XY}="13"; (55,-12)*+{Y^{2}}="23"; 
(0,-24)*+{X}="04"; (55,-24)*+{Y}="14";
%%%%%%%%%
 {\ar_{p11} "00";"01"};
 {\ar_{1pp} "01";"02"};
  {\ar^{1qq} "01";"22"};
{\ar^{1qq} "00";"11"};
{\ar_{p11}"11";"22"};
{\ar^{q11}"11";"32"};
{\ar@{=>}_{1\lambda\lambda}(20,7)*+{};(20,2)*+{}};
{\ar@{=>}_{\lambda11}(45,7)*+{};(45,2)*+{}};
%%%%%%%%%%%%%%%%%%%%%%%
 {\ar_{11f} "02";"12"};
  {\ar_{1f1} "12";"22"};
  {\ar_{f11} "22";"32"};
{\ar_{1m_{X}} "02";"03"};
{\ar^{1m_{Y}}"22";"13"};
{\ar^{1m_{Y}}"32";"23"};
{\ar@{=>}_{1\overline{f}}(20,-4)*+{};(20,-9)*+{}};
%%%%%%%%%%%
 {\ar_{1f} "03";"13"};
  {\ar_{f1} "13";"23"};
{\ar_{f} "04";"14"};
{\ar_{m_{X}}"03";"04"};
{\ar^{m_{Y}}"23";"14"};
{\ar@{=>}_{\overline{f}}(32,-16)*+{};(32,-21)*+{}};
\endxy
$$
The first holds by definition of $m_{C}$, the second merely rewrites the tensor product $\lambda\lambda$ and the third rewrites the commuting central diamond.  The final equality of pasting composites does two things at once: on its left side it again applies the definition of $m_{C}$ postcomposed with $\lambda$; we rewrite the right hand side using $(\lambda1).(1m_{Y}) = (1m_{Y}).(\lambda11)$.  By an entirely similar diagram chase we obtain the equality
$$
\xy
(10,30)*+{C^{3}}="00";
(10,15)*+{C^{2}}="11";
(10,0)*+{C}="12";
(0,-12)*+{X}="03"; (20,-12)*+{Y}="13";
%%%%%%%%%%%%%%%%%%%%%%%%%%%%%%%%%%%%
 {\ar|{m_{C}1} "00";"11"};
%%%%%%%%%%%%%%%%%%%%%%%%%%%%%%%%%%%%
 %%%%%%%%%%%%%%%%%%%%%%%%%%%%%%%%%%%
  {\ar|{m_{C}} "11";"12"};
  {\ar_{p} "12";"03"};
    {\ar^{q} "12";"13"};
    {\ar_{f} "03";"13"};
     {\ar@{=>}_{\lambda}(10,-5)*+{};(10,-10)*+{}};
  %%%%%%%%%%%%%%%%%%%%%%%%%%%%%%%%%%%
(40,20)*+{=};
\endxy
\hspace{1cm}
\xy
(30,30)*+{C^{3}}="00"; 
(17,22)*+{X^{2}C}="01";(43,22)*+{C^{2}Y}="11"; 
(0,12)*+{X^{3}}="02";(26,12)*+{X^{2}Y}="12"; (43,12)*+{XY^{2}}="22"; (60,12)*+{Y^{3}}="32"; 
(0,0)*+{X^{2}}="03"; (26,0)*+{XY}="13"; (60,0)*+{Y^{2}}="23"; 
(0,-12)*+{X}="04"; (60,-12)*+{Y}="14";
%%%%%%%%%
 {\ar_{pp1} "00";"01"};
 {\ar_{11p} "01";"02"};
  {\ar^{11q} "01";"12"};
{\ar^{11q} "00";"11"};
{\ar_{pp1}"11";"12"};
{\ar^{qq1}"11";"32"};
{\ar@{=>}_{11\lambda}(17,19)*+{};(17,14)*+{}};
{\ar@{=>}_{\lambda\lambda1}(45,19)*+{};(45,14)*+{}};
%%%%%%%%%%%%%%%%%%%%%%%
 {\ar_{11f} "02";"12"};
  {\ar_{1f1} "12";"22"};
  {\ar_{f11} "22";"32"};
{\ar_{m_{X}1} "02";"03"};
{\ar_{m_{X}1}"12";"13"};
{\ar^{m_{Y}1}"32";"23"};
{\ar@{=>}_{\overline{f}1}(42,7)*+{};(42,2)*+{}};
%%%%%%%%%%%
 {\ar_{1f} "03";"13"};
  {\ar_{f1} "13";"23"};
{\ar_{f} "04";"14"};
{\ar_{m_{X}}"03";"04"};
{\ar^{m_{Y}}"23";"14"};
{\ar@{=>}_{\overline{f}}(30,-4)*+{};(30,-9)*+{}};
\endxy
$$
This final composite and the one above it each constitute $\lambda^{3}$ sat atop either side of the first equation for a lax monoid morphism: as such they agree and $m_{C}$ is associative.  Much smaller, though similar, diagram chases show that $m_{C}\thing (i_{C}1)=1$ and that $m_{C}\thing (1i_{C})=1$; thus $C$ is a monoid and the colax cone $(p,\lambda:q \Rightarrow pf,q)$ lifts (uniquely) to a colax cone in $\fMon(\f C)_{l}$.
\\
That the lifted colax cone satisfies the universal property of the colax limit is relatively straightforward and left to the reader.  That $p$ and $q$ detect strict monoid morphisms is a consequence of the fact that they jointly detect identity 2-cells in \f C.
\\
Now if $U_{s}:\Mon(\f C)_{s} \to \f C$ has a left adjoint it is automatically strictly monadic by the enriched version of Beck's monadicity theorem \cite{Dubuc1970Kan-extensions}.  Therefore, using the above, Theorem~\ref{thm:monadicity} asserts that the isomorphism of 2-categories $E:\Mon(\f C)_{s} \to \TAlgs$ over \f C extends uniquely to an isomorphism of \f F-categories $E_{l}:\fMon(\f C)_{l} \to \fTAlgl$ over \f C.  In a similar way one verifies the conditions of Theorem~\ref{thm:monadicity} when $w \in \{p,c\}$ to obtain isomorphisms of \f F-categories $E_{w}:\fMon(\f C)_{w} \to \fTAlg_{w}$ over \f C for each $w$; by Theorem~\ref{thm:naturality} these isomorphisms are natural in $w$.
\end{proof}
\subsection{A non-example}
All of the examples we have seen are of the strictly monadic variety and indeed this is the case whenever one studies structured objects over some same base 2-category.  Now Theorem~\ref{thm:monadicity} is general enough to cover ordinary monadicity -- up to equivalence of \f F-categories -- but in fact there exist situations of a weaker kind.  Here is one such case.  Let $\Cat_{f} \subset \Cat$ be a full sub 2-category of \Cat whose objects form a skeleton of the finitely presentable categories (the finitely presentable objects in \Cat) and let $[\Cat,\Cat]_{f} \subset [\Cat,\Cat]$ be the full sub 2-category consisting of those endo 2-functors preserving filtered colimits: this is the tight part of the \f F-category $\fps(\Cat,\Cat)_{f}:[\Cat,\Cat]_{f} \to \ps(\Cat,\Cat)_{f}$ whose loose morphisms are pseudonatural transformations.  Likewise we have an \f F-category $\fps(\Cat_{f},\Cat):[\Cat_{f},\Cat] \to \ps(\Cat_{f},\Cat)$ and now restriction along the inclusion $\Cat_{f} \to \Cat$ induces a forgetful \f F-functor $R:\fps(\Cat,\Cat)_{f} \to \fps(\Cat_{f},\Cat)$.  Further restricting along the inclusion $ob\Cat_{f} \to \Cat_{f}$ gives a commuting triangle
$$\xy
(0,0)*+{\fps(\Cat,\Cat)_{f}}="00";
(35,0)*+{\fps(\Cat_{f},\Cat)}="20";
(17.5,-20)*+{[ob \Cat_{f},\Cat]}="1-2";
{\ar^{R} "00"; "20"}; 
{\ar_{SR} "00"; "1-2"}; 
{\ar^{S} "20"; "1-2"}; 
\endxy$$
The composite $S_{\tau}R_{\tau}:[\Cat,\Cat]_{f} \to [ob\Cat_{f},\Cat]$ is monadic though not strictly so: for the induced 2-monad $T$ we have $\fTAlg_{p}$ isomorphic to $\fps(\Cat_{f},\Cat)$ with $R_{\tau}:[\Cat,\Cat]_{f} \to [\Cat_{f},\Cat]$ the Eilenberg-Moore comparison 2-functor -- that this is a 2-equivalence follows from \Cat's being locally finitely presentable as a 2-category.  Whilst $R_{\tau}$ is a 2-equivalence the 2-functor $R_{\lambda}:\ps(\Cat,\Cat)_{f} \to \ps(\Cat_{f},\Cat)$ is not: indeed $\ps(\Cat_{f},\Cat)$ is locally small whereas $\ps(\Cat,\Cat)_{f}$ is not.  Yet $R_{\lambda}$ turns out to be a biequivalence and $R$ the uniquely induced \f F-functor to the \f F-category of algebras.
\bibliographystyle{acm}
\bibliography{bibdata}
\end{document}